\newtheorem{thm}{Theorem}[section]
\newtheorem*{thm*}{Theorem}   
\newtheorem{cor}[thm]{Corollary} 
\newtheorem{lem}[thm]{Lemma}\newtheorem{conj}[thm]{Conjecture}
\newtheorem{question}[thm]{Question}
\theoremstyle{definition}
\newtheorem*{rems*}{Remarks} \newtheorem{prob}{Problem}
\theoremstyle{remark}
\newtheorem{rem}[thm]{Remark}
\newcommand{\CP}{\mathbb{C\mkern1mu P}}               
\newcommand{\HP}{\mathbb{H\mkern1mu P}}               
\newcommand{\CaP}{\mathrm{Ca}\mathbb{\mkern1mu P}^2}  
\newcommand{\C}{\mathbb{C}}\newcommand{\fH}{\mathbb{H}}\newcommand{\fK}{\mathbb{K}}
\newcommand{\fF}{\mathbb{F}}
\newcommand{\RP}{\mathbb{R\mkern1mu P}}     
\newcommand{\N}{\mathbb{N}} 
\newcommand{\fN}{\mathbb{N}} \newcommand{\bbJ}{\Lambda} 
\newcommand{\Q}{\mathbb{Q}}\newcommand{\bbV}{\Upsilon} 
\newcommand{\R}{\mathbb{R}}
\newcommand{\Z}{\mathbb{Z}}
\newcommand{\tq}{\tilde{q}}
\newcommand{\tz}{\tilde{z}}\newcommand{\tp}{\tilde{p}}
\newcommand{\Sph}{\mathbb{S}}
\newcommand{\Symp}{\mathsf{Sp}}
\newcommand{\gF}{\mathsf{F}}
\newcommand{\G}{\mathsf{G}}
 \newcommand{\gS}{\mathsf{S}}
\newcommand{\gU}{\mathsf{U}}\newcommand{\gH}{\mathsf{H}}
\newcommand{\dc}{\dot{c}}
\DeclareMathOperator{\SU}{SU}
\DeclareMathOperator{\ad}{ad} \DeclareMathOperator{\cont}{cont}
\DeclareMathOperator{\diag}{diag}
\DeclareMathOperator{\Or}{\mathsf{O}}\DeclareMathOperator{\Iso}{Iso}
\DeclareMathOperator{\SO}{SO}\DeclareMathOperator{\symrank}{symrank}
\DeclareMathOperator{\corank}{corank}\DeclareMathOperator{\Par}{Par}
\DeclareMathOperator{\Rc}{R}\DeclareMathOperator{\symdeg}{symdeg}
\DeclareMathOperator{\cohom}{cohom}
\newcommand{\gT}{\mathsf{T}}
\newcommand{\gK}{\mathsf{K}}
\newcommand{\Sp}{\mathsf{Sp}}
\DeclareMathOperator{\Fix}{Fix}
\DeclareMathOperator{\Ric}{Ric}
\DeclareMathOperator{\Gr}{Gr}
\DeclareMathOperator{\diam}{diam} \DeclareMathOperator{\scal}{scal} 
\DeclareMathOperator{\rank}{rank}
\DeclareMathOperator{\spann}{span}
\newcommand{\eps}{\varepsilon}
\newcommand{\tg}{\tilde{g}}
\newcommand{\folL}{\mathcal{L}}
\newcommand{\folF}{\mathcal{F}}
\newcommand{\so}{\mathfrak{so}}
\begin{document}

%
\begin{titlepage}\title{Nonnegatively and Positively  curved manifolds}
\author{Burkhard Wilking}
\end{titlepage}
\maketitle
\setcounter{page}{1}
\setcounter{tocdepth}{0}


The aim of this paper is to survey some results on nonnegatively and positively curved
Riemannian manifolds. One of the important features of lower curvature bounds in general
is the invariance under taking Gromov Hausdorff limits. 
Celebrated structure and finiteness results  
provide a partial understanding of  the phenomena that occur while taking 
limits. These results however are not the subject of this survey 
since they are treated in other surveys of this volume. 

In this survey we take the more classical approach 
and focus on ''effective'' results. 
There are relatively few general ''effective'' structure results in the subject. 
 By Gromov's Betti number theorem 
the total Betti number of a nonnegatively curved manifold is bounded above 
by an explicit
 constant which only depends on the dimension. 
The Gromoll Meyer theorem says that a positively curved 
open manifold is diffeomorphic to the Euclidean space.
In the case of nonnegatively curved open manifolds, the soul theorem 
of Cheeger and Gromoll and Perelman's solution of the soul conjecture 
clearly belong to the greatest structure results in the subject, as well. 

Also relatively good is the understanding of fundamental groups of 
nonnegatively curved manifolds. A theorem of Synge asserts that an even
dimensional orientable compact manifold of positive sectional curvature is
simply connected. An odd dimensional positively curved manifold is known to
be orientable (Synge), and its fundamental group is finite by the classical
theorem of Bonnet and Myers. The fundamental groups of  nonnegatively curved
manifolds are virtually abelian, as a consequence of Toponogov's splitting
theorem. However, one of the ''effective'' conjectures in this context, the
so called Chern conjecture, was refuted: Shankar [1998] constructed
a positively curved manifold with a non cyclic abelian fundamental group.

As we will discuss in the last section the known methods for constructing
nonnegatively curved manifolds are somewhat limited. The most
important tools are the O'Neill formulas which imply  that the base of a
Riemannian submersion has nonnegative  (positive) sectional curvature if the
total space has. We recall that a smooth surjective map $\sigma\colon M\rightarrow B$ 
between two Riemannian manifolds is called a Riemannian submersion 
if the dual $\sigma_*^{ad}\colon T_{\sigma(p)}B\rightarrow T_pM$ 
of the differential of $\sigma$ is length preserving for all $p\in M$.
Apart from taking products, the only other method is
a special glueing technique, which was used by Cheeger, and more recently
by Grove and Ziller to construct quite a few interesting examples
of nonnegatively curved manifolds. 

By comparing with the class of known positively curved manifolds, 
the nonnegatively curved manifolds form a huge class. In fact in dimensions
above $24$ all known simply connected compact  positively curved manifolds 
are diffeomorphic to rank $1$ symmetric spaces. Due to work of the author
the situation is somewhat better in the class of known examples of manifolds
with positive curvature on open dense sets, see section~\ref{sec: pos}.

Given the drastic difference in the number of known examples, it is somewhat
painful that the only known obstructions on positively curved compact manifolds, 
which do not remain valid for the nonnegatively curved manifolds, are the above
quoted results of Synge and Bonnet Myers on the fundamental groups.

Since the list of general  structure results is not far from being complete by now, 
the reader might ask why a survey on such a subject is necessary. The reason
is that there are a lot of other beautiful theorems in the subject including
structure results, but they usually need additional assumptions.

We have subdivided the paper in five sections. Section~\ref{sec: sphere} is on sphere theorems and related rigidity results, some notes
 on very recent significant developments 
were added in proof and can be found in section~\ref{sec: added}.
 In section~\ref{sec: cpt nonneg},
we survey results on compact nonnegatively curved manifolds, and in  
section~\ref{sec: open nonneg}, results on open nonnegatively curved manifolds. 
Then follows a section on compact positively curved manifolds with symmetry, since 
this was a particularly active area in recent years. 
Although we pose problems and conjectures throughout the paper we close the paper
with a section on open problems.

We do not have the ambition to be complete 
or to sketch  all the significant historical developments 
that eventually led to the stated results. Instead we will usually only quote a few things according to personal taste.

\section{Sphere theorems and related rigidity results.}\label{sec: sphere} 

A lot of techniques in the subject were developed or used in 
connection with proving sphere theorems. In this section 
we survey some of these results. 
We recall Toponogov's triangle comparison theorem. 
Let $M$ be a complete manifold with sectional curvature $K\ge \kappa$ 
and consider a geodesic triangle $\Delta$ in $M$ 
consisting of minimal geodesics with length $a,b,c\in \R$. 
Then there exists a triangle in the
$2$-dimensional complete surface $M^2_{\kappa}$ of constant curvature $\kappa$
with side length $a,b,c$ and the angles in the comparison triangle 
bound the corresponding angles in $\Delta$ from below.
\subsection{ Topological sphere theorems.}
We start with the classical sphere theorem of Berger and Klingenberg.

\begin{thm}[Quarter pinched sphere theorem] Let $M$ be a complete simply connected manifold with sectional curvature 
$1/4 < K \le 1$. Then $M$ is homeomorphic to the sphere.
\end{thm}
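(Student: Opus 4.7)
The plan is the classical two-stage argument of Berger and Klingenberg.

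\textbf{Stage 1 (Injectivity radius estimate).} I would first establish $\mathrm{inj}(M)\ge\pi$. The upper bound $K\le 1$ forces, via Rauch comparison, that no geodesic has a conjugate point before parameter $\pi$, so any shortening of the injectivity radius below $\pi$ must arise from a closed geodesic of length $2\,\mathrm{inj}(M)<2\pi$ (Klingenberg's lemma on the structure of cut loci). In even dimensions such a geodesic is excluded by Synge's variational argument applied to closed geodesics in positively curved orientable manifolds (orientability being automatic from simple connectedness). In odd dimensions the same conclusion requires the strict pinching $K>1/4$ together with Klingenberg's long-homotopy argument, which produces a homotopy between a short closed geodesic and a point through curves of length $<2\pi$ and derives a contradiction. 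This stage is the hardest of the three and, in my view, the main obstacle.

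\textbf{Stage 2 (Two-disk covering via Toponogov).} Let $p,q\in M$ realize the diameter $d=\diam(M)$; such points exist since $M$ is compact by Myers. I claim that $M=B(p,\pi)\cup B(q,\pi)$. Suppose a point $x$ lies outside both balls, so $d(p,x)\ge\pi$, $d(q,x)\ge\pi$, and $d(p,q)\ge d(p,x)\ge\pi$. First variation of arc length at $p$, together with the fact that $q$ is a maximum of $d(p,\cdot)$, forces the angle at $p$ in the triangle $pqx$ to be $\le\pi/2$; symmetrically at $q$. Applying Toponogov's theorem (stated above) with lower curvature bound $\kappa=1/4$ produces a comparison triangle in the sphere of radius $2$ with all three sides $\ge\pi$ and two vertex angles $\le\pi/2$. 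The spherical law of cosines then pins every side down to $\pi$, and the strictness $K>1/4$ together with the rigidity case of Toponogov (or a direct second-variation refinement) rules out this limiting configuration.

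\textbf{Stage 3 (Topological conclusion).} By Stage 1, each of the open balls $B(p,\pi)$ and $B(q,\pi)$ is the diffeomorphic image under $\exp_p$ (resp.\ $\exp_q$) of an open Euclidean $n$-ball. By Stage 2, $M$ is the union of these two open $n$-cells. One then constructs a homeomorphism $M\to S^n$ directly: shrink slightly to obtain closed disks whose boundary spheres are identified by a homeomorphism of $S^{n-1}$ (using the compactness of the overlap region and the structure of a collar neighbourhood), and map each disk to a hemisphere. Equivalently, the distance function from $p$ can be perturbed to a Morse function with exactly two critical points, and Reeb's theorem yields $M\approx S^n$.

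The main obstacle, as indicated, is Stage 1 in odd dimensions, where the strict quarter-pinching is genuinely necessary; Stage 2 is delicate in its use of first variation and Toponogov but routine once those tools are in hand, and Stage 3 is essentially topology.
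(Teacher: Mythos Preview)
Your proposal is correct and follows essentially the same route as the paper: first the Klingenberg injectivity-radius estimate (Synge in even dimensions, the long-homotopy/Morse argument on the loop space in odd dimensions), then Berger's covering by two exponential balls via Toponogov applied at a pair of points of maximal distance. The only cosmetic difference is normalization: the paper phrases Berger's step as ``injectivity radius $\ge\pi/2$ and curvature $>1$,'' which is your Stage~1 and Stage~2 after rescaling lengths by $1/2$.

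One small point worth tightening in your Stage~2: the fact that $q$ maximizes $d(p,\cdot)$ constrains angles at $q$, not at $p$ (and symmetrically); and the criticality argument only guarantees, for each vertex separately, the existence of \emph{some} minimal segment making angle $\le\pi/2$ with the chosen side, so the two choices need not assemble into a single geodesic triangle. The standard fix is to run the Toponogov hinge comparison once at $q$ (or $p$) rather than a full triangle comparison at both vertices simultaneously; this already yields $d(p,x)<\pi$ or $d(q,x)<\pi$ and avoids the rigidity discussion.
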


The proof has two parts. The first part is to show that the injectivity radius 
of $M$ is at least $\pi/2$. This is elementary in even dimensions. 
In fact by Synge's Theorem any even dimensional oriented manifold with curvature 
$0<K\le 1$ has injectivity radius $\ge \pi$.  In odd dimensions the result is due 
to Klingenberg and relies on a more delicate Morse theory argument on the loop space.

The second part of the proof is due to Berger. He showed that any manifold 
with injectivity radius $\ge \pi/2$ and curvature $>1$ is homeomorphic to a sphere. 
In fact by applying Toponogov's theorem to two points of maximal distance, he showed   
that the manifold can be covered by two balls, which are via the 
exponential map diffeomorphic to balls in the Euclidean space.

Grove and Shiohama [1977] gave a significant improvement of Berger's theorem, by replacing the 
lower injectivity radius bound by a lower diameter bound.

\begin{thm}[Diameter sphere theorem] Any manifold with sectional curvature $\ge 1$ and diameter $>\pi/2$ is homeomorphic 
to a sphere. 
\end{thm}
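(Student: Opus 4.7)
The plan is to use critical point theory for distance functions, introduced by Grove and Shiohama in this very context. Fix $p,q \in M$ with $d(p,q) = \diam M =: D > \pi/2$ and consider $f = d(p,\cdot)$. The strategy is to prove that the only critical points of $f$ (in the generalized sense: $x$ is critical if every $v \in T_xM$ makes angle $\leq \pi/2$ with some minimal geodesic from $x$ to $p$) are $p$ and $q$. A gradient-like flow on the complement then exhibits $M$ as the union of two topological closed discs: $\bar B_\eps(p)$ and the flow-out of $\partial \bar B_\eps(p)$ all the way to the maximum $q$, glued along $S^{n-1}$; this produces a homeomorphism $M \cong S^n$.

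The main step is to rule out critical points away from $\{p,q\}$, and the tool is Toponogov's theorem. Suppose $x \in M \setminus \{p,q\}$ is critical and pick a minimal geodesic $\sigma$ from $x$ to $q$. By criticality there is a minimal geodesic $\tau$ from $x$ to $p$ making an angle $\alpha := \angle(\dot\sigma(0),\dot\tau(0)) \leq \pi/2$. Applying the hinge form of Toponogov's theorem with $\kappa = 1$ to this hinge at $x$, the spherical law of cosines gives
\[
\cos D \;\geq\; \cos a \cos b + \sin a\,\sin b\,\cos\alpha \;\geq\; \cos a\cos b,
\]
where $a = d(x,p)$ and $b = d(x,q)$. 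Since $p,q$ realize the diameter, $a,b \leq D$; and since $D > \pi/2$, $\cos D < 0$.

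A short case analysis, which crucially uses the sharp bound $D > \pi/2$, now produces a contradiction. If $a,b \leq \pi/2$, then $\cos a\cos b \geq 0 > \cos D$, contradicting the inequality. If $a,b > \pi/2$, both cosines are negative, so $\cos a\cos b > 0 > \cos D$, again a contradiction. In the mixed case, say $a \leq \pi/2 < b$, monotonicity of cosine on $[0,\pi]$ together with $b \leq D$ gives $|\cos b| \leq |\cos D|$, while $x \neq p$ forces $\cos a < 1$; multiplying, $|\cos a\cos b| < |\cos D|$, i.e.\ $\cos a\cos b > \cos D$, contradicting the displayed inequality. The symmetric case $b \leq \pi/2 < a$ is ruled out by $x \neq q$. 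Thus $\{p,q\}$ is precisely the critical set of $f$.

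The main obstacle is this case analysis, and it is striking that $D > \pi/2$ is exactly the hypothesis needed to close all three cases simultaneously. The remaining steps --- constructing a gradient-like vector field on the regular set of $f$, identifying sublevel sets below $\eps$ and superlevel sets above $D-\eps$ with closed discs, and passing from a function with exactly two critical points to a topological sphere --- are standard consequences of Grove and Shiohama's toolkit for nonsmooth distance functions and introduce no further essential difficulties.
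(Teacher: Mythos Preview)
Your proof is correct and follows the same Grove--Shiohama strategy as the paper: show via Toponogov that $d(p,\cdot)$ has no critical points on $M\setminus\{p,q\}$, then use a gradient-like flow to exhibit $M$ as two discs glued along their boundary. The only difference is executional: the paper rescales so that $\diam M=\pi/2$ and $K>1$, then applies the \emph{angle} version of Toponogov to the triangle $(p,q,z)$ directly, obtaining that the angle at $z$ between any minimal geodesics to $p$ and to $q$ is strictly $>\pi/2$, which immediately rules out criticality without any case analysis; you instead use the \emph{hinge} version and the spherical law of cosines, which forces the three-case argument on the signs of $\cos a,\cos b$. Both work, but the paper's route is a bit cleaner.
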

 
More important than the theorem was the fact the proof introduced a new concept: 
critical points of distance functions. A point $q$ is critical with respect 
to the  distance function $d(p,\cdot)$ if the set of initial vectors 
of minimal geodesics from $q$ to $p$ intersect each closed half  space of $T_qM$. 
If the point $q$ is not critical it is not hard to see that 
there is a gradient like vectorfield $X$ in a neighborhood of $q$. 
A vectorfield is said to be gradient like if for each integral curve $c$ of $X$
the map $t\mapsto d(p,c(t))$ is a monotonously increasing bilipschitz map
onto its image. An elementary yet important observation is that 
local gradient like vectorfields can be glued together using a partition of 
unity.

\begin{proof}[Proof of the diameter sphere theorem.]
 We may scale the manifold such that its diameter is $\pi/2$ 
and the curvature is strictly $>1$.
Choose two points $p$, $q$ of maximal distance $\pi/2$, 
and let $z$ be an arbitrary third point. 
Consider the spherical comparison triangle $(\tilde p,\tq,\tz)$. 
We do know that the side length of $(\tp,\tz)$ and $(\tq,\tz)$ 
are less or equal to $\pi/2$ whereas $d_{\Sph^2}(\tp,\tq)=\pi/2$. 
This implies that the angle of the triangle at $\tz$ is $\ge \pi/2$. 
By Toponogov's theorem any minimal geodesic 
triangle with corners $p,q,z$ in $M$ has an 
angle strictly larger than $\pi/2$  based at $z$.
This in turn implies that the distance function $d(p,\cdot)$ has no critical points 
in $M\setminus \{p,q\}$. 
 Thus there is a gradient like vectorfield $X$ on $M\setminus \{p,q\}$.
Furthermore without loss of generality 
 $X$ is given on $B_r(p)\setminus \{p\}$ 
by the actual gradient of the distance function $d(p,\cdot)$, 
where $r$ is smaller than the injectivity radius. 
We may also assume 
$\|X(z)\|\le d(q,z)^2$ for all $z\in M\setminus \{p,q\}$. Then the flow $\Phi$ of $X$ exists 
for all future times and 
we can define a diffeomorphism \[
\psi\colon T_pM\rightarrow M\setminus \{q\}
\]
as follows: for a unit vector
$v\in T_pM$ and a nonnegative number $t$ put 
$\psi(t\cdot v)=\exp(tv)$ if
$t\in [0,r]$ and $\psi(t\cdot v)=\Phi_{t-r}(\exp(rv))$ if $t\ge r$. 
Clearly this implies that $M$ is homeomorphic to a sphere.
\end{proof}

There is another generalization of the sphere theorem of Berger and  Klingenberg.
A manifold is said to have positive isotropic curvature
if for all orthonormal vectors $e_1,e_2,e_3,e_4\in T_pM$ the 
curvature operator satisfies
\[
R(e_1\wedge e_2 + e_3\wedge e_4,e_1\wedge e_2 + e_3\wedge e_4)+
 R(e_1\wedge e_3 + e_4\wedge e_2,e_1\wedge e_3 + e_4\wedge e_2)>0
\]

By estimating the indices of minimal $2$ spheres in a manifold of positive isotropic curvature,
Micallef and Moore [1988] were able to show that 

\begin{thm} A simply connected compact Riemannian manifold of positive isotropic curvature 
is a homotopy sphere.
\end{thm}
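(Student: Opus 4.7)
My plan is to argue by contradiction, using minimal $2$-spheres. Assume $M^n$ is simply connected and \emph{not} a homotopy sphere, with $n\ge 4$. Applying Hurewicz to the first non-vanishing higher homotopy group together with Poincar\'e duality, one produces an integer $k$ with $2\le k\le n/2$ such that $\pi_k(M)\ne 0$: if $k$ is the smallest such, then Hurewicz gives $H_j(M)=0$ for $0<j<k$ and $H_k(M)\ne 0$, while Poincar\'e duality yields $H_{n-k}(M)\ne 0$, which forces $n-k\ge k$.

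The first main ingredient is a Sacks--Uhlenbeck type min--max for the energy functional on maps $\Sph^2\to M$, applied to a non-trivial class in $\pi_k$. This produces a non-constant harmonic (possibly branched) map $f\colon\Sph^2\to M$ whose Morse index is at most $k-2$.

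The second main ingredient is a lower bound of $\lfloor (n-2)/2\rfloor$ on the index of any such harmonic sphere in a manifold with positive isotropic curvature. Complexifying the normal bundle $N$ of $f$ produces a holomorphic bundle $N\otimes\C$ over $\Sph^2\cong\CP^1$ of complex rank $n-2$, which by Grothendieck's theorem splits as $\bigoplus_{i=1}^{n-2}\mathcal{O}(k_i)$. The complex-bilinear extension of the Riemannian metric is a non-degenerate pairing on $N\otimes\C$ which groups the summands into pairs $\mathcal{O}(k_i)\oplus\mathcal{O}(-k_i)$, with at most one self-paired summand of degree $0$. Consequently at least $\lfloor (n-2)/2\rfloor$ of the $k_i$ are non-negative, so the space $\gH$ of holomorphic sections of $N\otimes\C$ has complex dimension at least $\lfloor (n-2)/2\rfloor$.

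For any $V\in\gH$, the identity $\nabla^{0,1}V=0$ reduces the complexified second variation of energy at $f$ along $V$ to a pure curvature integral; writing the real and imaginary parts of $f_z$ and $V$ as an orthonormal $4$-frame $e_1,e_2,e_3,e_4$, the integrand is a positive multiple of the isotropic curvature of this frame, which is pointwise positive by hypothesis. Thus the second variation is strictly negative on all of $\gH$, giving $\ind f\ge\lfloor (n-2)/2\rfloor$ and contradicting $\ind f\le k-2\le n/2-2$. I expect the main technical difficulties to be, first, executing the min--max with an honest index bound and, second, extending the normal bundle holomorphically across the branch points of $f$, without which Grothendieck's splitting and the bilinear pairing argument cannot be carried out.
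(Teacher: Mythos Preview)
Your proposal is correct and is precisely the Micallef--Moore argument that the survey refers to with its one-line description (``by estimating the indices of minimal $2$-spheres in a manifold of positive isotropic curvature''); the paper gives no further details beyond this attribution. One small refinement worth noting: the curvature integrand equals the isotropic curvature only when the holomorphic section $V$ is itself isotropic (so that the real and imaginary parts of $f_z$ and $V$ genuinely form an orthonormal $4$-frame after normalization), hence one should restrict $\gH$ to a totally isotropic subspace of holomorphic sections---your pairing argument still yields such a subspace of complex dimension at least $\lfloor(n-2)/2\rfloor$, and the claim ``at most one self-paired summand of degree $0$'' should be relaxed to allow several, without affecting the count.
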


A simple computation shows that pointwise strictly quarter pinched manifolds have positive 
isotropic curvature. Thus the theorem of Micallef and Moore is a generalization 
of the quarter pinched sphere theorem.
A more direct improvement of the quarter pinched sphere theorem is due to 
Abresch and Meyer [1996]. 

\begin{thm}\label{thm: AM} Let $M$ be a compact simply connected
 manifold with sectional curvature
$\tfrac{1}{4(1+10^{-6})^{2}}\le K\le 1$.
Then one of the following holds 
\begin{enumerate}
\item[$\bullet$] $M$ is homeomorphic to a sphere.
\item[$\bullet$] $n$ is even and the cohomology ring $H^*(M,\Z_2)$ is generated by 
one element.
\end{enumerate}
\end{thm}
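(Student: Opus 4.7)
The plan is to follow the Berger--Klingenberg strategy and to upgrade each of its two ingredients so as to absorb the slightly weakened pinching. The first and deepest new input is an injectivity radius estimate: after normalizing so that $\sup_M K=1$, one needs $\mathrm{inj}(M)\ge \pi/2$ valid under the relaxed hypothesis $K\ge 1/(4(1+10^{-6})^{2})$. In even dimensions this is immediate from Synge's theorem. In odd dimensions one must refine Klingenberg's classical loop space argument: Klingenberg shows that a short closed geodesic on $M$ would have a controlled Morse index on the free loop space, and the quarter-pinched hypothesis rules such a geodesic out. Running each step of that argument with explicit quantitative care shows that the classical proof has some slack, and the constant $10^{-6}$ appearing in the statement is a measure of how much of that slack can be absorbed.

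With $\mathrm{inj}(M)\ge \pi/2$ in hand, I would next pick two points $p,q\in M$ at maximal distance. If $\diam(M)>\pi/2$, then Toponogov comparison applied to a triangle $\{p,q,z\}$ shows, exactly as in the proof of the diameter sphere theorem reproduced above, that $d(p,\,\cdot\,)$ has no critical points on $M\setminus\{p,q\}$; the gradient-like flow plus exponential patching construction then yields a homeomorphism $T_pM\to M\setminus\{q\}$, so $M$ is a topological sphere and the first alternative holds. We may therefore assume $\diam(M)\le \pi/2$; combined with Myers' theorem, the diameter is pinned down to within $O(10^{-6})$ of~$\pi/2$.

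The remaining step is to use this almost-extremal diameter configuration to constrain the topology of $M$. Following the philosophy of Berger's rigidity theorem and the Gromoll--Grove diameter rigidity program, I would analyze the dual set $A=\{z\in M : d(p,z)=\diam(M)\}$ and, via Toponogov comparison with the spherical model, show that $A$ is a closed, connected, totally geodesic submanifold and that the normal exponential map from $A$ exhibits $M\setminus\{p\}$ as a disk bundle over~$A$. A Morse-theoretic analysis of the distance function from~$A$ (which has $p$ as its unique critical point outside~$A$) then yields a CW decomposition of $M$ with one cell in each of an arithmetically regular sequence of dimensions $0,k,2k,\ldots$, modeled on a rank-one symmetric space. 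This forces $H^*(M,\Z_2)$ to be generated by a single class, and the existence of a top cell in the resulting pattern then forces $\dim M$ to be even.

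The main obstacle is unquestionably the odd-dimensional injectivity radius estimate. The diameter dichotomy is routine once $\mathrm{inj}(M)$ is controlled, and the cohomological rigidity in the small-diameter case is a perturbation of the classical Berger/Gromoll--Grove machinery; but the Klingenberg-type loop space argument must be executed with explicit quantitative control in order to accommodate a pinching constant strictly less than~$1/4$, and it is precisely this quantitative control that dictates the small explicit constant appearing in the hypothesis.
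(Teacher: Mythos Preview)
Your proposal has two genuine gaps, and in the non-sphere case it diverges entirely from the argument of Abresch and Meyer.

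First, the diameter dichotomy is set at the wrong threshold. You normalize so that $\sup_M K=1$, hence the lower bound is only $K\ge 1/\bigl(4(1+10^{-6})^2\bigr)\approx 1/4$. The Grove--Shiohama argument you invoke needs $K\ge 1$; with lower bound $\approx 1/4$ the Toponogov triangle comparison only forces a sphere once $\diam(M)>\pi(1+10^{-6})$, not once $\diam(M)>\pi/2$. So the correct dichotomy is $\diam(M)>\pi(1+10^{-6})$ versus $\diam(M)\le \pi(1+10^{-6})$; this is what Abresch and Meyer use. Your injectivity radius bound is also too weak: they actually prove $\mathrm{inj}(M)\ge\pi$ (injectivity radius equals conjugate radius under the pinching), not merely $\pi/2$.

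Second, and more seriously, your treatment of the non-sphere case is not a viable strategy. You propose to run a ``perturbation of the Gromoll--Grove diameter rigidity machinery'' to show that the far set $A$ is a totally geodesic submanifold and that $M$ is a disk bundle over it. But diameter rigidity is a rigidity theorem: it requires the exact equality $\diam=\pi/2$ in the $K\ge 1$ normalization, and its proof breaks down under any perturbation. With only $\diam\le\pi(1+10^{-6})$ there is no reason for $A$ to be a submanifold, let alone totally geodesic, and no such perturbed rigidity statement is known. This is not a routine stability argument.

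The actual Abresch--Meyer mechanism is entirely different. After establishing $\mathrm{inj}(M)\ge\pi$ and reducing to $\diam(M)\le\pi(1+10^{-6})$, the heart of the proof is the \emph{horse shoe inequality}: for every unit vector $v\in T_pM$ one has $d\bigl(\exp(\pi v),\exp(-\pi v)\bigr)<\pi$, so the two ``antipodal'' images are joined by a unique minimal geodesic. This is proved via a new mixed Jacobi field estimate, and it is \emph{this} estimate, not the injectivity radius bound, that absorbs the $10^{-6}$ and constitutes the main technical work. The horse shoe inequality lets one build a smooth map $f\colon\RP^n\to M$ of degree one (integral degree in odd dimensions, $\Z_2$-degree in even dimensions), and the stated conclusion on $H^*(M,\Z_2)$ then follows from a direct cohomology computation. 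You have therefore misidentified both the central construction and the locus of difficulty.
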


It is a well known result in topology that the $\Z_2$ cohomology rings of spaces which are generated by 
one element are precisely given by the $\Z_2$ -cohomology rings of rank 1 symmetric spaces
$\RP^n,\CP^n,\HP^n,\CaP$ and $\Sph^n$, cf. [Zhizhou, 2002].

The proof of Theorem~\ref{thm: AM} has again two parts. Abresch and Meyer first establish 
that the injectivity radius of $M$ 
is bounded below by the conjugate radius 
which in turn is bounded below by $\pi$. 
From the diameter sphere theorem it is clear that 
without loss of generality  $\diam(M,g)\le \pi (1+10^{-6})$.
They then establish the horse shoe inequality, which was conjectured by Berger: 
for $p\in M$ and any unit vector $v\in T_pM$ 
one has \[
d(\exp(\pi v),\exp(-\pi v))< \pi.
\] 
In particular $\exp(\pi v)$ and $\exp(-\pi v)$ 
can be joined by a unique minimal geodesic. 
Once the horse shoe inequality is established it is easy to see 
that there is a smooth map $f\colon \RP^n\rightarrow M^n$ 
such that in odd dimensions the integral degree is $1$ 
and in even dimensions the $\Z_2$-degree is $1$. 
The theorem then follows by a straightforward cohomology computation. 

The horse shoe inequality relies on a mixed Jacobi field 
estimate. 
We only state the problem here 
in a very rough form. Let $c$ be a normal geodesic in $M$ 
and $J$ a Jacobifield with $J(0)=0$. 
Suppose that at time $t_0=\tfrac{2\pi}{3}$ the value  
$\|J(t_0)\|$  is quite a bit smaller than one would 
expect by Rauch's comparison from the lower curvature bound. 
Can one say that $\|J(t)\|$ is also quite a bit smaller than in Rauch's 
comparison
for $t\ge [t_0,\pi]$? 
Abresch and Meyer gave an affirmative answer. 
If one wants to improve the pinching constant 
one certainly needs to improve their Jacobifield estimate.

\subsection{Differentiable sphere theorems.}

It is not known whether there are exotic spheres with positive 
sectional curvature. 
A closely related question is whether one can improve
in any (or all) of the above mentioned topological sphere theorems
the conclusion from homeomorphic to being diffeomorphic to 
a sphere. In other words, can one turn the 
topological sphere theorems into differentiable sphere theorems. 
In each case this is an open question. 
However, there are quite a few differentiable sphere 
theorems, which hold under stronger assumptions.

The first differentiable sphere theorem was established  in his thesis 
by Gromoll. He had a pinching condition $\delta(n) < K \le 1$
 but his pinching constant $\delta(n)$ 
depended upon the dimension, i.e. $\delta(n)\to 1$ for $n\to \infty$. 

Sugimoto and Shiohama [1971] established the first bound which was independent of 
the dimension with $\delta=0.87$.  
In a series of papers Grove, Im Hof, Karcher and Ruh obtained the following 
result

\begin{thm} There is a decreasing 
sequence of numbers $\delta(n)$ with $\lim_{n\to \infty} \delta(n)=0.68$
such that any simply connected manifold $(M,g)$ with 
$\delta(n)<K\le 1$ is diffeomorphic to the sphere $\Sph^n$. 
Furthermore the diffeomorphism may be chosen such that 
the natural action  $\Iso(M,g)$ on $M$ corresponds under $f$ 
to a linear 
action on $\Sph^n$.
\end{thm}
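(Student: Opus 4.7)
The plan is to upgrade the two-chart construction behind the quarter-pinched topological sphere theorem into a canonical smooth equivariant construction. Three ingredients are needed: (i) a lower injectivity radius bound $\operatorname{inj}(M,g)\ge \pi$, (ii) a smooth map $f\colon M\to \Sph^n$ built canonically from the Riemannian data, and (iii) a Jacobi-field estimate showing that $df$ is everywhere invertible, which is where the dimension-dependent constant $\delta(n)$ enters.

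For (i), I would invoke the classical results already recalled for the quarter-pinched sphere theorem: Synge's theorem in even dimensions, Klingenberg's long-homotopy estimate in odd dimensions. Both already hold for $\delta(n)\ge 1/4$. For (ii), choose a pair $p,q\in M$ realising the diameter and a linear isometry $\iota\colon T_pM\to T_N\Sph^n$, where $N$ is the north pole. On $\bar B_{\pi/2}(p)$ define $f_p:=\exp_N\circ\iota\circ\exp_p^{-1}$, which is a diffeomorphism onto $\bar B_{\pi/2}(N)\subset\Sph^n$ by the injectivity radius bound. Parallel-transport $\iota$ along a minimal geodesic from $p$ to $q$ to produce a compatible frame at $q$ and define $f_q$ analogously. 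Berger's covering argument ensures $M=\bar B_{\pi/2}(p)\cup \bar B_{\pi/2}(q)$, and one globalises $f_p$ and $f_q$ to a single smooth map $f$ by Karcher center-of-mass averaging on $\Sph^n$, using a partition of unity depending only on the distance to $p$.

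For (iii), the differential $df$ is computed along radial geodesics from $p$ via Jacobi fields, and the two-sided bound $\delta(n)<K\le 1$ provides two-sided Rauch comparison, controlling both norm and shape. The error introduced by the averaging procedure is governed by the angular discrepancy between $f_p$ and $f_q$ in the overlap annulus, which is in turn bounded by Toponogov's theorem; one has to verify that this discrepancy, and the resulting perturbation of $df$, is small enough that invertibility is preserved. Once $f$ is a local diffeomorphism of degree one between compact connected manifolds of the same dimension, it is automatically a diffeomorphism.

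Equivariance is built into the construction, since it depends only on the base point $p$ and the frame $\iota$ (the far point $q$ is obtained intrinsically from $p$, and for non-unique choices of $q$ one averages over the finite set of farthest points in the center-of-mass step). For any $\phi\in \Iso(M,g)$, the diffeomorphism produced from the transported data $(\phi(p),d\phi_p\circ\iota)$ coincides with $f$ up to composition with a unique element of $\Or(n+1)$, defining the representation $\Iso(M,g)\to \Or(n+1)$ intertwined by $f$. The hardest step, and the source of the explicit value $0.68$, is (iii): the dimension-dependent Jacobi-field and center-of-mass estimates must be optimised simultaneously, and current techniques fall short of the conjecturally sharp threshold $\delta(n)=1/4$.
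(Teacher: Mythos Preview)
The paper is a survey and does not prove this theorem; it is stated with attribution to the series of papers by Grove, Im~Hof, Karcher and Ruh, and the surrounding text only records the history of the pinching constant. There is therefore no proof in the paper to compare against.

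Taken on its own, your outline captures the classical two-chart approach to the non-equivariant differentiable sphere theorem and correctly names center-of-mass averaging and two-sided Rauch comparison as the analytic core. The equivariance step, however, has a real gap. By naturality of any construction that depends only on $(M,g,p,\iota)$, the diffeomorphism built from the transported data at $\phi(p)$ is exactly $f\circ\phi^{-1}$. Your further claim that this coincides with $A\circ f$ for some $A\in\Or(n+1)$ is then equivalent to $f\circ\phi^{-1}\circ f^{-1}\in\Or(n+1)$, which is precisely the conclusion to be proved; so the argument as written is circular. For a map assembled by center-of-mass patching of two exponential charts, conjugating an isometry of $M$ through $f$ yields merely some diffeomorphism of $\Sph^n$, not a priori a linear one. (The noncanonical choice of the second point $q$ --- the farthest-point set from $p$ need not be finite, and minimal geodesics to it need not be unique --- is a further obstacle to the construction being determined by $(p,\iota)$ alone, and your fix of ``averaging over the finite set of farthest points'' is not well-defined in general.) In the cited papers the equivariant statement is obtained by a genuinely equivariant construction: one produces a preliminary diffeomorphism, uses it to transport the $\Iso(M,g)$-action to $\Sph^n$, and then exploits the pinching together with Karcher's center of mass to average this action (or the map) into a linear one; verifying that the averaged map remains a diffeomorphism is a further Jacobi-field estimate and is part of what drives the constant. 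Your sketch omits this step entirely.
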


If one does not insist on an equivariant diffeomorphisms, then 
the pinching constant can be improved somewhat. Suyama [1995] showed 
that a simply connected manifold with $0.654<K\le 1$ 
is diffeomorphic to the sphere.

The work of Weiss [1993] goes in a different direction. 
 He uses the fact that a quarter pinched sphere $M^n$ 
 has Morse perfection $n$. 
A topological sphere $M^n$ is said that to have Morse perfection 
$\ge k$ if there  
is a smooth map $\Psi\colon \Sph^k\rightarrow \C^\infty(M,\R)$ 
satisfying $\Psi(-p)=-\Psi(p)$, 
and for each $p\in \Sph^k$ the function $\Psi(p)$ 
is a Morse function with precisely two critical points. 
It is not hard to see that a quarter pinched sphere has Morse perfection 
$n$. Weiss used this to rule out quite a few of the exotic spheres 
by showing that their Morse perfection is $<n$. 
He showed that in dimensions $n=4m-1$ any exotic 
sphere bounding a parallelizable manifold 
has odd order in the group of exotic spheres unless 
the Morse perfection $\le n-2$.

By Hitchin there are also exotic spheres with a non-vanishing
 $\alpha$-invariant, and thus these spheres do not even admit 
metrics with positive scalar curvature, see the survey 
of Jonathan Rosenberg.

Similar to the quarter pinched sphere theorem, 
one can also strengthen the assumptions in the diameter 
sphere theorem in order to get a differentiable 
sphere theorem. This was carried out by Grove and Wilhelm [1997]. 
\begin{thm} Let $M$ be an $n$-manifold with sectional curvature $\ge 1$ 
containing  $(n-2)$-points with pairwise distance $>\pi/2$. Then $M$ is diffeomorphic 
to a sphere. 
\end{thm}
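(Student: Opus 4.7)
The plan is to generalize the Grove--Shiohama construction from a single pair of far-apart points to the full configuration of $n-2$ points, extracting enough additional rigidity to upgrade the homeomorphism to a diffeomorphism.

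First, I would set up the Toponogov consequences. Let $p_1,\dots,p_{n-2}$ denote the given points. The pairwise distance condition $d(p_i,p_j)>\pi/2$ together with $K\ge 1$ implies, by the same spherical comparison triangle argument used above in the proof of the diameter sphere theorem, that at every $z\in M\setminus\{p_1,\dots,p_{n-2}\}$ the unit initial vectors $v_i\in T_zM$ of minimizing geodesics from $z$ to $p_i$ satisfy $\langle v_i,v_j\rangle<0$ for all $i\ne j$. In particular these $n-2$ vectors are linearly independent, so the smooth function $F(z)=(d(p_1,z)^2,\dots,d(p_{n-2},z)^2)$, after mollification near each cut locus, has differential of rank $n-2$ on an open dense set, and its kernel defines a smooth $2$-plane distribution $\mathcal{D}$ there. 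The point of the specific count $n-2$ is that it leaves exactly two residual dimensions.

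Next, I would exhibit $M$ as a smooth join $\Sigma * S^2$, where $\Sigma$ is the ``simplex'' formed by the $p_i$'s and the dual set $A^\vee = \{z \in M : d(z,\{p_1,\dots,p_{n-2}\}) \text{ is maximal}\}$ plays the role of $S^2$. Applying Grove--Shiohama to individual pairs $(p_i,p_j)$ produces local hemispherical decompositions compatible with the distribution $\mathcal D$; these are then glued via a partition of unity in the style of the gradient-like vector field construction from the diameter sphere theorem. The result is a smooth map $\psi\colon \Sph^n\to M$ sending the standard iterated join $\Sph^{n-3}*\Sph^2$ to the geometric join decomposition of $M$. One has to verify that $A^\vee$ is a smoothly embedded $2$-sphere (it is a compact surface whose intrinsic geometry, inherited from the ambient $K\ge 1$, prevents exotic topology, and in dimension $2$ topological and smooth sphere coincide), and that the resulting $\Sph^2$-bundle structure over the $(n-3)$-simplex spanned by the $p_i$'s is smoothly trivial by standard bundle arguments over a contractible base.

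The main obstacle, I expect, is the transition between the regular region where $\mathcal D$ is nondegenerate and the singular loci, in particular $A^\vee$ and the collision loci where two $v_i$'s align. One must show that the patched gradient-like vector fields extend smoothly to genuine smooth vector fields at $A^\vee$, even though the distance functions $d(p_i,\cdot)$ are only semiconcave there. This is where the pairwise angular estimate $\langle v_i,v_j\rangle<0$ must be leveraged not only qualitatively (to get noncriticality, as in Grove--Shiohama) but quantitatively, to get uniform Lipschitz and derivative bounds that survive smoothing. Once this smoothing is done carefully, the join decomposition becomes genuinely smooth and $M$ is diffeomorphic to the standard $\Sph^n$.
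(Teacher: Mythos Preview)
Your proposal has a genuine gap at the very first step. The claim that $\langle v_i,v_j\rangle<0$ at every $z$ does \emph{not} follow from the spherical comparison argument of the diameter sphere theorem. In that proof one first rescales so that $p,q$ realise the diameter; this forces $d(z,p),d(z,q)\le d(p,q)$, and it is precisely these two upper bounds that make the comparison angle at $\tilde z$ exceed $\pi/2$. Here the $p_i$ are not assumed to be diameter-realising, so $d(z,p_i)$ can be close to $\pi$. Concretely, on the round $\Sph^n$ put $p_1$ at the north pole and $p_j$ just below the equator on the same meridian; for $z$ on that meridian between $p_j$ and the south pole, the unique minimal geodesic from $z$ to $p_1$ passes through $p_j$, so $v_1=v_j$ and the angle is zero, and this persists on an open neighbourhood of $z$. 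Thus the linear-independence claim, the rank-$(n-2)$ map $F$, and the $2$-plane distribution $\mathcal D$ all collapse. The assertion that $A^\vee$ is a smoothly embedded $2$-sphere is likewise unsupported: for generic configurations of $p_1,\dots,p_{n-2}$ in the round sphere the max--min set degenerates to a single point, and in a general manifold with $K\ge1$ it is only an extremal subset in the Alexandrov sense, not a submanifold. Without a genuine dual $\Sph^2$ the join picture never starts, and the closing paragraph---smoothly extending glued gradient-like fields across the singular strata---is exactly the step that separates the homeomorphism conclusion of Grove--Shiohama from a diffeomorphism; asserting it can be done ``carefully'' is not a proof.

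The argument of Grove and Wilhelm, and the variant the paper sketches in Theorem~\ref{thm: alex deformation}, proceeds along an entirely different line. One does not attempt a direct smooth decomposition of $(M,g)$. Instead one deforms $g$ through metrics of curvature $\ge1$: using two of the given points one pushes $(M,g_t)$ toward the spherical suspension of a convex distance sphere, and then iterates inside the suspended factor using the remaining points. With the full supply of $n-2$ points (equivalently, enough weak packing radius) the iterated limit is the round $\Sph^n$; one also deforms the standard sphere toward the same limit, and the diffeomorphism is extracted from this common Alexandrov limit rather than from any explicit gradient flow on the original metric.
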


If one has only $k$ points with pairwise distance $>\pi/2$, then 
Grove and Wilhelm obtain restrictions on the differentiable structure 
of $M$. 

With a slight variation of the proof of Grove and Wilhelm one 
can actually get a slightly better result.
Let $M$ be an inner metric space. 
We say that $M$
has a weak $2$-nd packing radius $\ge r$ 
if $\diam(M)\ge r$. We say it has a 
weak $k$-th packing radius $\ge r$ 
if there is a point $p\in M$ 
such that $\partial B_{r}(p)$ is connected and 
 endowed with its inner metric 
has weak $(k-1)$-th packing radius $\ge r$.

\begin{thm}\label{thm: alex deformation}\label{thm: alex}
Let $(M,g)$ be an $n$-manifold with sectional curvature $\ge 1$ 
and weak $(k+1)$-th packing radius $>\pi/2$.
 Then there is a family of metrics $g_t$  $(t\in [0,1)$ with
sectional curvature $\ge 1$ and $g_0=g$ such that $(M,g_t)$ converges 
for $t\to 1$ to an $n$-dimensional Alexandrov space $A$ satisfying: 
If $k\ge n$, then $A$ is isometric to the standard sphere. 
If $k<n$, then $A$ is given by the $k$-th iterated 
suspension $\Sigma^k A'$ of an $n-k$-dimensional Alexandrov space $A'$.
\end{thm}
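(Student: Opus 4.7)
The plan is to argue by induction on $k$, with the statement formulated from the outset in the Alexandrov category (so that one may later apply the inductive hypothesis to level sets that are a priori only Alexandrov spaces). The case $k=0$ is vacuous, and the case $k=1$ is essentially a deformation-refinement of the diameter sphere theorem of Grove--Shiohama: it asserts that any $(M,g)$ with $K\ge 1$ and $\diam>\pi/2$ admits a family $g_t$ collapsing to a spherical suspension $\Sigma A'$.

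For the inductive step, unfold the packing condition once: choose $p_0\in M$ and $r>\pi/2$ such that $\partial B_r(p_0)$ is connected and, with its inner metric, has weak $k$-th packing radius $>r$. Since there exist points at distance $>\pi/2$ from $p_0$, Toponogov comparison with $\Sph^2$---applied exactly as in the proof of the diameter sphere theorem---forces $d(p_0,\cdot)$ to have no critical points in $M\setminus\{p_0\}$ except on a ``dual'' set $C(p_0)=\{z:d(p_0,z)=\rho\}$ for some $\rho\ge \pi/2$. A preliminary technical step is to show that, up to cut-locus issues, $C(p_0)$ carries a natural $(n-1)$-dimensional Alexandrov structure of curvature $\ge 1$ that inherits weak $k$-th packing radius $>\pi/2$ from the connected inner metric space $\partial B_r(p_0)$ used in the hypothesis. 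This allows the eventual induction to be applied to $C(p_0)$.

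Now construct the deformation $g_t$. Informally, $M$ is swept out by the gradient-like flow of $d(p_0,\cdot)$ from $p_0$ to $C(p_0)$, and away from these two strata it has the structure of a (singular) warped product with fibers identified with $C(p_0)$. The family $g_t$ interpolates between the actual warping profile / fiber metric and the spherical-suspension model $dr^2+\sin^2(r)\,g_{C(p_0)}$ on $[0,\pi]\times C(p_0)$. The lower curvature bound $K\ge 1$ is forced to persist along the deformation by a Toponogov hinge comparison against this very model: at the level of warped-product geometry the comparison reduces to a fibrewise inequality already satisfied by $g$. In the limit $t\to 1$ the deformed space converges in Gromov--Hausdorff sense to the spherical suspension $\Sigma C(p_0)$. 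If $k\ge n$ then iterating until the cross-section becomes $0$-dimensional forces $A\cong \Sph^n$; if $k<n$ the induction hypothesis yields $C(p_0)=\Sigma^{k-1}A'$ with $\dim A'=n-k$, hence $A=\Sigma C(p_0)=\Sigma^k A'$.

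The main obstacle is the construction of $g_t$ while genuinely maintaining $K\ge 1$ along the entire deformation. On the open set where $d(p_0,\cdot)$ is smooth and has no critical points the warped-product picture is clean, but one must arrange the limiting metric to glue correctly across the singular strata $\{p_0\}$ and $C(p_0)$ and through the cut locus. This forces one to work in the Alexandrov category throughout and to verify curvature via distance comparison rather than direct sectional-curvature calculations. A secondary, purely bookkeeping, difficulty is to check that the packing-radius condition on $\partial B_r(p_0)$ transfers to the Alexandrov cross-section $C(p_0)$ in a way that survives the deformation, so that the induction closes at each step.
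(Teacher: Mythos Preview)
Your proposal has a genuine gap at the step you yourself flag as ``the main obstacle'': preserving $K\ge 1$ along an intrinsic interpolation between $g$ and a warped-product model. The sentence ``the lower curvature bound \dots\ reduces to a fibrewise inequality already satisfied by $g$'' is not an argument; away from the smooth region the metric $g$ is simply not a warped product over any cross-section, and there is no general principle that a linear (or any canonical) interpolation between a metric of curvature $\ge 1$ and a suspension model stays in curvature $\ge 1$. A secondary problem is your cross-section $C(p_0)=\{z:d(p_0,z)=\rho\}$: the set where $d(p_0,\cdot)$ attains its maximum is typically a single point (already on the round sphere), so it cannot carry the $(n-1)$-dimensional Alexandrov structure you need for the induction. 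The relevant $(n-1)$-dimensional object is the metric sphere $\partial B_{\pi/2}(p_0)$, not the maximum set.

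The paper bypasses the interpolation difficulty by an extrinsic trick that you should know. Instead of deforming $g$ directly, one passes to the spherical suspension $X=\Sigma M$ (an Alexandrov space of curvature $\ge 1$), and realizes $M$ as $N_0=\partial B_{r(0)}(c(0))$ for a suitable ball in $X$. One then lets the center $c(t)$ slide from the north pole toward the equatorial copy of $p$ while the radius $r(t)$ shrinks to $\pi/2$. Each $N_t=\partial B_{r(t)}(c(t))$ is the boundary of a convex set in $X\setminus\{p_\pm\}$, hence automatically has curvature $\ge 1$; for $t<1$ it sits in the smooth part of $X$ and can be smoothed to a genuine Riemannian metric on $M$. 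At $t=1$ one lands on the suspension of $\partial B_{\pi/2}(p)\subset M$. The curvature bound is thus enforced by convexity, not by any delicate tensor computation, and the iteration proceeds by repeating the construction with a moving basepoint on the successive equators. The packing-radius bookkeeping is then straightforward, since at each stage the new cross-section is exactly $\partial B_{\pi/2}$ of the previous space.
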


\begin{cor} Let $\eps>0$. A manifold with sectional curvature $\ge 1$ 
and diameter $>\pi/2$ also admits
 a metric with sectional 
curvature $\ge 1$ and diameter $>\pi -\eps$.   
\end{cor}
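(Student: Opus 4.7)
The plan is to apply Theorem~\ref{thm: alex deformation} with $k=1$ and then invoke continuity of diameter under Gromov--Hausdorff convergence.

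First I would translate the hypothesis into the language of the theorem. A weak $2$-nd packing radius $\ge r$ is by definition just $\diam(M) \ge r$, so the assumption $\diam(M,g) > \pi/2$ is exactly the statement that $(M,g)$ has weak $(k+1)$-th packing radius $>\pi/2$ for $k=1$. Thus Theorem~\ref{thm: alex deformation} produces a one-parameter family of metrics $g_t$, $t\in[0,1)$, on $M$ with $g_0=g$, $\sec(g_t)\ge 1$, and $(M,g_t)$ converging in the Gromov--Hausdorff sense as $t\to 1$ to an $n$-dimensional Alexandrov space $A$.

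Second I would check that the limit space $A$ has diameter exactly $\pi$. In the case $k\ge n$ (which forces $n=1$), the theorem gives $A=\Sph^1$, whose diameter is $\pi$. In the case $k<n$, we get $A=\Sigma A'$, the spherical suspension of an $(n-1)$-dimensional Alexandrov space $A'$; the two suspension points lie at distance $\pi$, and since the suspension structure is compatible with curvature $\ge 1$ no two points can be farther apart, so $\diam(A)=\pi$.

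Finally I would conclude by recalling that diameter is continuous under Gromov--Hausdorff convergence of compact metric spaces: $\diam(M,g_t)\to \diam(A)=\pi$ as $t\to 1$. Hence for every $\eps>0$ there exists $t$ close enough to $1$ so that $\diam(M,g_t)>\pi-\eps$, and by construction $(M,g_t)$ still has sectional curvature $\ge 1$. Since each $g_t$ is a smooth Riemannian metric on $M$, this $g_t$ is the desired metric.

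There is essentially no obstacle here beyond invoking the previous theorem correctly; the only point that deserves care is verifying that the limit $A$ really has diameter $\pi$ in the suspension case, which is immediate from the definition of spherical suspension used in Alexandrov geometry.
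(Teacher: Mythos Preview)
Your proposal is correct and is precisely the intended argument: the paper states the corollary without proof because it follows immediately from Theorem~\ref{thm: alex deformation} applied with $k=1$, using that the limit suspension (or the standard sphere) has diameter $\pi$ and that diameter is continuous under Gromov--Hausdorff convergence. The only remark is that the case $k\ge n$ is vacuous here since sectional curvature requires $n\ge 2$, so the suspension case is the only relevant one.
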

As in the paper of Grove and Wilhelm,
one can show in the situation of Theorem~\ref{thm: alex deformation}
that there is a sequence 
of positively metrics  $\tg_i$ on the standard sphere with 
curvature $\ge 1$ such that $(\Sph^n,\tg_i)_{i\in\N}$ converges to 
$A$ as well. In particular, Grove and Wilhelm showed
 that an affirmative answer 
to the following question would imply the 
differentiable diameter sphere theorem. 

\begin{question}[Smooth stability conjecture] Suppose a sequence of compact $n$-manifolds $(M_k,g_k)$ 
with curvature $\ge -1$ converges in the Gromov Hausdorff topology
 to an $n$-dimensional compact Alexandrov space $A$. Does this imply 
that for all large $k_1$ and $k_2$ the manifolds $M_{k_1}$ and $M_{k_2}$ are diffeomorphic?   
\end{question}

By Perelman's stability theorem it is known that $M_{k_1}$ and 
$M_{k_2}$ are homeomorphic for all large $k_1$ and $k_2$, see 
the article of Vitali Kapovitch in this volume.  

\begin{proof}[Sketch of the proof of Theorem~\ref{thm: alex deformation}.]
Let $p,q\in M$ be  points such that $d(p,q)>\pi/2+\eps$ for some $\eps >0$.
We claim that we can find a 
continuous family of metrics with $g_0=g$ and $K_t\ge 1$ 
such that $(M,g_t)$ converges for $t\to 1$ 
to the suspension of $\partial B_{\pi/2}(p)$.

We consider the suspension $X$ of $M$, i.e.,
$X= [-\pi/2,\pi/2]\times M/\sim$ where the equivalence 
classes of $\sim$ are given by $p_+:=\{\pi/2\}\times M$, 
$p_-:=\{-\pi/2\}\times M$ and the one point sets $\{(t,p)\}$ for $|t|\neq \pi/2$.
Recall that $X$ endowed with the usual warped product metric
 is an Alexandrov space with curvature $\ge 1$.

We consider the curve $c(t)=((1-t)\pi/2,p)$ 
as a curve in $X$, $r(t):= \pi/2+\eps(1-t)$
and the ball $B_{r(t)}(c(t))\subset X$. 
Put $N_t:=\partial B_{r(t)}(c(t))$. 
Since $X\setminus B_{r(t)}(c(t))$ is strictly convex 
and $N_t$ is contained in the Riemannian manifold $X\setminus \{p_\pm\}$ for all $t\neq 1$, it follows 
that $N_t$ is an Alexandrov space with curvature $\ge 1$ for all 
$t\in [0,1]$. Clearly $N_0$ is up to a small scaling factor isometric to $M$. 
Moreover $N_1$ is isometric to the suspension of 
$\partial B_{\pi/2}(p)\subset M$.

Using that $N_t$ is strictly  convex in the Riemannian manifold 
$X\setminus \{p_{\pm}\}$ for $t\in [0,1)$, it follows that the family $N_t$  
can be approximated by a family of strictly convex smooth 
submanifolds $\tilde{N}_t\subset X\setminus \{p_{\pm}\}$, $t\in [0,1)$. 
Furthermore, one may assume that $\lim_{t\to 1} N_t=N_1=\lim_{t\to 1} \tilde{N}_t$.

We found a family of metrics $g_t$ of curvature $>1$ 
such that $(M,g_t)$ converges to the suspension of 
 $\partial B_{\pi/2}(p)$. We may assume that  $\partial B_{\pi/2}(p)$
has weak $k$-th packing radius $>\pi/2$ and $k\ge 2$.

We now choose a curve of  points $q_t \in M$ 
converging for $t\to 1$ to a point on the equator $q_1\in \partial  B_{\pi/2}(p)$
  of the limit space such that
there is a point $q_2$ in $ \partial  B_{\pi/2}(p)$ 
whose intrinsic distance to $q_1$ is $>\pi/2$. 

We now repeat the above construction 
for all $t\in (0,1)$ with $(M,g,p)$ 
replaced by $(M,g_t,q_t)$. 
This way we get for each $t$ an one parameter family 
of smooth metrics $g(t,s)$ with $K\ge 1$ 
which converges for $s\to 1$ to the suspension of 
the boundary of $B_{\pi/2}(q_t)\subset (M,g_t)$.
It is then easy to see that one can choose 
the metrics such that they depend smoothly on $s$ and $t$.
Moreover, after a possible reparameterization of $g(s,t)$
the one parameter family $t\mapsto g(t,t)$
converges to the double suspension of 
the boundary of $B_{\pi/2}(q_1)\subset \partial B_{\pi/2}(p)$.
Clearly the theorem follows by  iterating this process.
\end{proof}


We recall that to each Riemannian manifold $(M,g)$  
and each point $p\in M$ one can assign a curvature operator
 $R\colon \Lambda_2T_pM \rightarrow \Lambda_2T_pM$. 
We call the operator $2$-positive if the sum of the smallest two 
eigenvalues is positive. It is known that manifolds 
with $2$-positive curvature operator have positive isotropic curvature.

\begin{thm}\label{thm: boehm wilking}\label{thm: 2 pos} Let $(M,g)$ be a compact manifold with 
$2$-positive curvature operator. Then the normalized Ricci flow evolves 
$g$ to a limit metric of constant sectional curvature.
\end{thm}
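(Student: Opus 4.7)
The plan is to follow Hamilton's Ricci flow program, using the reaction-diffusion equation satisfied by the curvature operator and improving Hamilton's maximum principle via a carefully chosen family of invariant cones.

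First, I would recall the setup. Under the unnormalized Ricci flow $\partial_t g = -2\Ric(g)$, the curvature operator $R$ (viewed as a self-adjoint endomorphism of $\Lambda^2 T_pM$) satisfies an evolution equation of the form
\[
\partial_t R = \Delta R + Q(R), \qquad Q(R) = R^2 + R^{\#},
\]
where $R^{\#}$ is the Lie-algebraic square defined via the bracket on $\so(n)\cong \Lambda^2\R^n$. By Hamilton's maximum principle for systems, a closed $O(n)$-invariant convex cone $\mathcal{C}$ in the space of algebraic curvature operators is preserved along the flow provided it is invariant under the pointwise ODE $\dot R = Q(R)$. Thus the whole problem reduces to constructing invariant cones with the right pinching behavior.

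The core idea is to build a continuous family of $O(n)$-invariant convex cones $\mathcal{C}(s)$, $s\in [0,1)$, interpolating between the cone $\mathcal{C}(0)$ of $2$-nonnegative curvature operators and the ray $\mathcal{C}(1)$ generated by the curvature operator of the round sphere. The key device is a family of linear isomorphisms $\ell_{a,b}$ on algebraic curvature operators, defined by decomposing $R$ into its scalar, traceless-Ricci, and Weyl pieces and rescaling the latter two components by $a$ and $b$; then one sets $\mathcal{C}(s) = \ell_{a(s),b(s)}(\mathcal{C}_0)$ for a suitable curve $(a(s),b(s))$ starting at $(1,1)$. The heart of the argument is the transformation formula for $Q$ under $\ell_{a,b}$: one shows that
\[
\ell_{a,b}^{-1} Q(\ell_{a,b} R) = Q(R) + \text{lower order correction in } a,b,
\]
where the correction is expressed in terms of the Ricci part of $R$ and is controlled by explicit algebraic identities. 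I would then verify that if $R$ lies on the boundary of $\mathcal{C}(0)$, the correction terms combined with the fact that $\mathcal{C}(0)$ is already ODE-invariant force $Q(R)$ to point strictly inward along the flow of cones — so each $\mathcal{C}(s)$ is ODE-preserved.

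Next, a pinching-improvement argument: along the Ricci flow, since the scalar curvature blows up in finite time (because $2$-positivity implies positive Ricci curvature, hence Myers and finite-time existence), the curvature operator gets trapped in cones $\mathcal{C}(s)$ with $s\to 1$ as blow-up is approached. Rescaling to keep $\max |R|$ bounded and invoking Shi's derivative estimates and Hamilton's compactness theorem, one extracts a limit whose curvature operator lies in $\bigcap_s \mathcal{C}(s)$, i.e.\ is a positive multiple of the round-sphere curvature operator at every point. Finally, the normalized Ricci flow then converges exponentially in $C^\infty$ to a metric of constant positive sectional curvature, by the standard argument of Hamilton.

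The main obstacle is the construction of the invariant cone family and the verification of its ODE-invariance. This is a delicate algebraic problem: one must choose the curve $(a(s),b(s))$ so that the extra terms produced by $\ell_{a,b}$ in the transformation of $Q$ are dominated by the inward push already present in $\mathcal{C}(0)$, and one has to handle simultaneously the behavior of the Weyl part (which has no natural sign) and the traceless Ricci part. Once this algebraic step is in place, the maximum principle, the pinching improvement, and Hamilton's convergence machinery combine to give the stated conclusion.
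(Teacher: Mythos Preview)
Your proposal follows essentially the same route as the paper: reduce via Hamilton's maximum principle to the ODE $\dot R = R^2 + R^\#$, construct a continuous pinching family of $\Or(n)$-invariant convex cones starting at the cone of $2$-nonnegative operators and collapsing to $\R^+ I$, and exploit the transformation law for $R^2+R^\#$ under the equivariant linear maps $\ell_{a,b}$ that rescale the traceless Ricci and Weyl components.

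One point where your sketch oversimplifies: the pinching family is not literally $\ell_{a(s),b(s)}(\mathcal{C}_0)$. In the actual construction one first intersects $\mathcal{C}_0$ with a Ricci-pinching condition $\Ric \ge \tfrac{\scal}{n}\,p(s)$ and only then applies $\ell_{a(s),b(s)}$; without this extra restriction the images need not remain ODE-invariant as $s$ grows. This is exactly the ``delicate algebraic problem'' you flag at the end, so you are aware something more is needed, but you should expect the third parameter $p(s)$ to enter. A second minor difference: the paper packages the convergence step by passing from the pinching family to a Hamilton pinching \emph{set} and invoking his original convergence criterion, rather than the blow-up/compactness route you outline; both work here, but the pinching-set argument is closer to the original and avoids appealing to Shi's estimates and limit extraction.
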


In dimension $3$ the theorem is due to Hamilton [1982]. 
Hamilton [1986] also showed that the theorem holds for $4$-manifolds with   
positive curvature operator. This was extended by Chen to $4$-manifolds with 
$2$-positive curvature operator. 
In dimension $2$ it was shown by Hamilton and Chow that for any surface 
the normalized  Ricci flow converges to limit metric of constant curvature. 
In dimensions above $4$ the theorem is due to B\"ohm and Wilking [2006]. 
For $n\ge 3$ the proof solely relies on the maximum principle and works 
more generally in the category of orbifolds. 

We recall that a family of metrics $g_t$ on $M$ 
is said to be a solution of the Ricci flow if 
\[
\tfrac{\partial}{\partial t} g_t= -2\Ric_t
\]
Hamilton showed that if one represents 
the curvature operator $R$ with respect to suitable moving  
orthonormal frames, then  
\[
\tfrac{\partial}{\partial t} R= \Delta R+ 2(R^2+R^\#)
\]
where $R^\#=\ad\circ R\wedge R\circ \ad^*$, $\ad \colon \Lambda_2\so(T_pM)\rightarrow \so(T_pM)$ 
is the adjoint representation and where we have identified  
$\Lambda_2 T_pM$ with the Lie algebra $\so(T_pM)$. 
Hamilton's maximum principle allows to deduce certain dynamical properties 
of the PDE from dynamical properties of the ODE
\[
\tfrac{d}{d t} R= R^2+R^\#.
\]

\begin{proof}[Sketch of the proof of Theorem~\ref{thm: 2 pos}.]
We let $S_B^2\bigl(\so(n)\bigr)$ denote the vectorspace of algebraic curvature 
operators satisfying the Bianchi identity. 

 We call a continuous family $C(s)_{s\in [0,1)}\subset
S_B^2(\so(n))$  of  closed convex $\Or(n)$-invariant cones of full
dimension a pinching family, if
\begin{enumerate}
\item each $\Rc\in C(s)\setminus \{0\}$ has positive scalar curvature,

\item $\Rc^2+\Rc^\#$ is contained in the interior of the tangent
      cone of $C(s)$ at $\Rc$ for all $\Rc \in C(s)\setminus \{0\}$
      and all $s\in (0,1)$,
\item $C(s)$ converges in the pointed Hausdorff topology to the
      one-dimensional cone $\R^+ I$ as $s\to 1$.
\end{enumerate}

The argument in [B\"ohm and Wilking, 2006] has two parts. 
One part is a general argument showing for any pinching 
family $C(s)$ ($s\in [0,1)$) that on any compact manifold 
$(M,g)$ for which the curvature operator is contained 
in the interior of $C(0)$ at every point the normalized 
Ricci flow evolves $g$ to a constant curvature limit metric. 
In the proof of this result one first constructs 
to such a pinching family a pinching set in the sense 
Hamilton which in turn gives the convergence result.

The harder problem is actually to construct a pinching family 
with $C(0)$ being the cone of $2$-nonnegative curvature operators. 
Here a  new tool is established. It 
is a formula that describes how this ordinary differential equation 
$R'=R^2+R^\#$ changes under  $\Or(n)$-equivariant linear transformations.
  By chance the transformation law 
is a lot simpler than for a generic $\Or(n)$- invariant 
quadratic expression. 
The transformation law often allows to construct new  
ODE-invariant curvature cones as the image 
of a given 
invariant curvature cone under suitable equivariant linear transformation 
$l\colon S_B^2\bigl(\so(n)\bigr)\rightarrow S_B^2\bigl(\so(n)\bigr)$.
This in turn is used to establish the existence of a pinching family.
\end{proof}

\subsection{Related rigidity results} 

We first mention the diameter rigidity theorem of Gromoll and Grove [1987]
\begin{thm}[Diameter rigidity]
 Let $(M,g)$ be a compact manifold with sectional curvature $K\ge 1$
and diameter $\ge \pi/2$. Then one of the following holds:
\begin{enumerate}
\item[a)] $M$ is homeomorphic to a sphere.
\item[b)] $M$ is locally isometric to a rank one symmetric space.\\[-1ex]
\end{enumerate}
\end{thm}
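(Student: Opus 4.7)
The plan is to split on the value of the diameter and then, in the equality case, extract a rank one symmetric structure from the equality discussion of Toponogov's theorem.

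\textbf{Step 1 (Reduction to the boundary case).} First, if $\diam(M,g)>\pi/2$, then the diameter sphere theorem of Grove and Shiohama proved above shows that $M$ is homeomorphic to a sphere, placing us in case (a). Hence from now on I may assume $\diam(M,g)=\pi/2$ and try to prove (b). Fix $p,q\in M$ with $d(p,q)=\pi/2$ and set $A_{p}:=\{x\in M:d(p,x)=\pi/2\}$, the \emph{dual set} of $p$; note $q\in A_{p}$.

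\textbf{Step 2 (Structure of the dual set).} Mimicking the proof of the diameter sphere theorem, for any third point $z\in M\setminus\{p,q\}$ the spherical comparison triangle $(\tp,\tq,\tz)$ has side lengths at most $\pi/2$, so the angle at $\tz$ is $\ge \pi/2$, and Toponogov forces every minimizing triangle $(p,q,z)$ to have angle $\ge \pi/2$ at $z$. Now in the rigidity regime $K\ge 1$ with equality occurring in this Toponogov estimate, one has a local splitting: along any minimal geodesic $c$ from $x\in A_p$ to $p$, vectors $v\in T_xM$ perpendicular to $\dc(0)$ which are tangent to another minimal connection to $p$ generate a totally geodesic subspace, and $\exp_x(tv)$ stays in $A_p$ for small $t$. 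I would combine these infinitesimal observations with a Berger/Gromoll--Grove rigidity argument to conclude that $A_p$ is a closed totally geodesic submanifold of $M$ of positive dimension, intrinsically satisfying $K\ge 1$ and again with diameter at most $\pi/2$; a parallel argument gives the symmetric statement with the roles of $p$ and $q$ interchanged, so each $x\in A_p$ has $A_x\ni p$, exhibiting a \emph{duality} $A_{A_{p}}=\{p\}$ reminiscent of the antipodal structure in the rank one symmetric spaces $\Sph^{n},\RP^{n},\CP^{n},\HP^{n},\CaP$.

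\textbf{Step 3 (Join structure and induction).} The plan is to induct on $\dim M$: applying the theorem to $A_p$, either $A_p$ is a sphere and then the duality pair $(M,A_p)$ models $(\Sph^n,\Sph^{n-1})$, $(\RP^n,\RP^{n-1})$ or the base-fiber pair of a projective space, or $A_p$ itself is locally isometric to a CROSS of lower rank. In either case the exponential map from $p$ essentially writes $M$ as the mapping cylinder of a (generalized) Hopf fibration $A_{p}\to \{p\}$, because every unit speed geodesic issuing from $p$ hits $A_p$ at time $\pi/2$ (this uses $\diam=\pi/2$ and the fact that $p$ has no conjugate point before $\pi/2$, itself a consequence of $K\ge 1$ and the rigidity already extracted). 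The radial structure combined with the induced isometric action of $\Iso(A_p)$ in the normal directions pins down the metric up to scaling on each fiber of the Hopf-type projection.

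\textbf{Step 4 (Recognition).} Finally I would identify the resulting metric with one of the CROSS metrics. The input is the full rigidity of Toponogov: along any minimal geodesic $\gamma$ from $p$ to a point $x\in A_p$, any Jacobi field vanishing at $p$ and tangent to $A_p$ at $x$ must realize equality in Rauch's comparison with the round sphere, which forces the curvature operator to coincide on a parallel frame with that of the model. A universal cover / holonomy argument then upgrades pointwise curvature equality to local isometry with $\Sph^n,\RP^n,\CP^n,\HP^n$ or $\CaP$. The hardest step is Step 2/3: proving that the dual set is actually a smooth totally geodesic submanifold and that $M$ is a tubular neighborhood of it in the model way. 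This is where the work of Gromoll and Grove essentially lives, since all the subtle non-generic configurations of minimal geodesics between $p$ and points of $A_p$ must be ruled out or seen to match the symmetric space model, and this is what necessitates the topological (rather than diffeomorphic) conclusion in case (a).
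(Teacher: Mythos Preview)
Your outline has the right geometric picture but is missing the central engine of the Gromoll--Grove proof: the rigidity of Riemannian submersions from round spheres (the rigidity of Hopf fibrations, Theorem~\ref{thm: hopf}). In the actual argument one sets $N_2=\partial B_{\pi/2}(p)$ (your $A_p$) and $N_1=\partial B_{\pi/2}(N_2)$, shows that both are totally geodesic closed submanifolds with $N_2=\partial B_{\pi/2}(N_1)$, and then proves that the normal exponential map yields Riemannian submersions $\sigma_i\colon\nu^1(N_i)\to N_j$ which restrict to Riemannian submersions $\nu^1_q(N_i)\to N_j$ from each round unit normal sphere. It is the classification of such submersions that forces $N_j$ to be a point or a rank one symmetric space, yields $N_1=\{p\}$ \emph{a posteriori}, and then determines the pullback metric $\exp_p^*g$ on $B_{\pi/2}(0)\subset T_pM$. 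Your assertion in Step~2 that $A_{A_p}=\{p\}$ is therefore a conclusion of this machinery, not an input obtainable from Toponogov equality alone; a priori $N_1$ can be positive-dimensional. Your proposed substitutes --- induction on $\dim A_p$ in Step~3 and Rauch equality on individual Jacobi fields in Step~4 --- do not fill the gap: even if $A_p$ were known to be a CROSS by induction, you have no mechanism to propagate that back to the metric on all of $M$ without the submersion structure, and equality in Rauch along a geodesic $\gamma$ constrains only curvatures of planes containing $\dot\gamma$, which is far short of pinning down the full curvature tensor or invoking a holonomy argument.

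You also do not address the non-simply-connected case, which in the paper requires a separate step: either the universal cover is not a sphere and is handled as above, or $\dim N_1+\dim N_2=n-1$ and one concludes constant curvature one directly.
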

The original theorem allowed a potential exceptional case 
\begin{enumerate}
\item[$\bullet$] $M$ has the cohomology ring of the Cayley plane, but is not isometric 
to the Cayley plane.  
\end{enumerate}
This case was ruled out much later by the author, see [Wilking, 2001].

The proof of the diameter rigidity theorem is closely linked to the rigidity of Hopf 
fibrations which was established by Gromoll and Grove [1988] as well
\begin{thm}[Rigidity of Hopf fibrations]\label{thm: hopf}
 Let $\sigma\colon \Sph^n \rightarrow B$ be a Riemannian submersion 
with connected fibers. Then $\sigma$ is metrically congruent to a Hopf fibration. 
In particular the fibers are totally geodesic and $B$ is rank one symmetric space.
\end{thm}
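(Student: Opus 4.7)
The plan is to reduce the statement to the diameter rigidity theorem just proved, and then to upgrade the resulting identification of $B$ to a metric congruence of $\sigma$ with one of the standard Hopf fibrations. By O'Neill's curvature formula, $\sec(B) \geq \sec(\Sph^n) = 1$; surjectivity makes $B$ compact, and Bonnet--Myers gives $\diam(B) \leq \pi$. If the fibers are $0$-dimensional, then $\sigma$ is a local isometry and hence a covering, so simple connectedness of $\Sph^n$ (for $n \geq 2$) forces it to be a global isometry, the degenerate case. Henceforth I assume the fibers have positive dimension; connectedness of the fibers together with the homotopy exact sequence of the fibration shows $B$ is simply connected.

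\textbf{The diameter bound.} The crux is to prove $\diam(B) \geq \pi/2$. Fix $p \in \Sph^n$ and a unit horizontal vector $v \perp T_p F_p$, where $F_p = \sigma^{-1}(\sigma(p))$. The horizontal geodesic $\gamma(t) = \exp_p(tv)$ is a great circle of period $2\pi$, and by O'Neill its projection $c := \sigma \circ \gamma$ is a unit-speed geodesic in $B$. Since $\gamma(2\pi) = p$, $c$ is closed of period $L = 2\pi/k$ for some $k \in \N$. I would show $k \leq 2$ as follows: since $\gamma$ stays horizontal everywhere, it meets any fiber $F_{\gamma(t)}$ perpendicularly, so an earlier return $\gamma(t_0) \in F_p$ with $t_0 \in (0,\pi)$ would give a geodesic arc in $\Sph^n$ hitting $F_p$ orthogonally at both ends; a Jacobi-field estimate along $\gamma$, exploiting the O'Neill $A$-tensor together with the ambient constant curvature $1$, rules this out and forces the first fiber return at $t_0 = \pi$. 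In the case $k = 1$, $c$ has length $2\pi$ and Toponogov comparison on a half-arc gives $d_B(\sigma(p), \sigma(-p)) = \pi$; in the case $k = 2$ (so $\sigma(-p) = \sigma(p)$), Toponogov applied to the geodesic digon formed by the two half-arcs of $c$ gives $d_B(\sigma(p), \sigma(c(\pi/2))) = \pi/2$. Either way $\diam(B) \geq \pi/2$.

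\textbf{Applying diameter rigidity.} The diameter rigidity theorem now declares $B$ either homeomorphic to a sphere or locally isometric to a rank-one symmetric space; simple connectedness upgrades the latter to a global isometry, so $B$ is one of $\Sph^m$, $\CP^m$, $\HP^m$, or $\CaP$. Standard topological obstructions on fibrations of spheres (via the Serre spectral sequence together with Euler characteristic constraints) eliminate the Cayley plane case. In the sphere case, J.~F.~Adams' resolution of the Hopf invariant one problem forces a smooth fibration $\Sph^n \to \Sph^m$ with positive-dimensional fiber to be the classical real, complex, quaternionic, or octonionic Hopf map. In the remaining rank-one cases, the explicit curvature tensor of $B$ turns O'Neill's formula into algebraic constraints on the horizontal distribution which force the O'Neill $T$-tensor to vanish, giving totally geodesic fibers, and pin down the integrability tensor $A$ up to the standard Hopf connection. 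A final holonomy matching then produces a global metric congruence of $\sigma$ with the standard Hopf submersion.

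\textbf{Main obstacle.} The delicate step is the bound $k \leq 2$ in the diameter argument: a priori a horizontal great circle could project to a closed geodesic of $B$ of very short period, so that $\gamma$ would re-enter $F_p$ at time $\pi/k$ with $k \geq 3$. Excluding this requires controlling Jacobi fields along $\gamma$ simultaneously against the O'Neill $A$-tensor and the constant curvature $1$ of the ambient $\Sph^n$, and it is this same rigidity that is ultimately responsible for restricting the existence of Hopf-type fibrations of spheres to fibers of dimension $1$, $3$, or $7$.
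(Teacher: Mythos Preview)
Your approach has a fundamental circularity problem. In this paper the diameter rigidity theorem is not proved independently of Theorem~\ref{thm: hopf}; on the contrary, the sketch of the diameter rigidity proof explicitly invokes Theorem~\ref{thm: hopf} at the key step (``By the rigidity of submersions defined on Euclidean spheres (Theorem~\ref{thm: hopf}) we deduce that $N_i$ is either a point or a rank one symmetric space with diameter $\pi/2$''). So you cannot ``reduce the statement to the diameter rigidity theorem just proved''; that would be assuming what you want to show. The paper in fact gives \emph{no} proof of Theorem~\ref{thm: hopf} at all: it is quoted as an external input, due to Gromoll and Grove [1988] for $(n,\dim B)\neq (15,8)$ and to the author [Wilking, 2001] for the remaining case, and those arguments proceed by entirely different methods (a direct analysis of metric foliations of round spheres, and an index-parity argument for closed geodesics, respectively) rather than through diameter rigidity.

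Even if one ignored the circularity, your argument has a genuine gap in the ``sphere case''. The conclusion of diameter rigidity in case~(a) is only that $B$ is \emph{homeomorphic} to a sphere, not that $B$ is the round sphere of constant curvature. Adams' Hopf-invariant-one theorem then constrains the fiber dimensions topologically, but you still owe a metric statement: nothing you have written forces $B$ to carry the round metric, nor forces the $T$-tensor to vanish. Your final paragraph (``algebraic constraints on the horizontal distribution which force the O'Neill $T$-tensor to vanish \ldots\ and pin down the integrability tensor $A$'') is exactly the content of the Gromoll--Grove argument and is where all the work lies; asserting it is not the same as proving it. The ``main obstacle'' you flag, the bound $k\le 2$, is also real and not dispatched by the Jacobi-field remark you give.
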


Similarly to the previous theorem, the original theorem allowed for a possible exception, 
Grove and Gromoll assumed in addition $(n,\dim B)\neq (15,8)$. 
Using very different methods, the rigidity  of this special case 
was proved by the author in [Wilking, 2001]. This in turn ruled out the exceptional case 
in the diameter rigidity theorem as well. 

\begin{proof}[Sketch of the proof of the diameter rigidity theorem.]
The proof of the diameter rigidity theorem is the most beautiful 
rigidity argument in positive curvature. 
One assumes that the manifold is not homeomorphic to a sphere. 
Let $p$ be a point with $N_2:=\partial B_{\pi/2}(p)\neq \emptyset$. 
One defines $N_1=\partial B_{\pi/2}(N_2)$ as the boundary of the distance 
tube $B_{\pi/2}(N_2)$ around $N_2$.
It then requires some work to see that $N_1$ and $N_2$ are totally 
geodesic submanifolds without boundary 
satisfying $N_2=\partial B_{\pi/2}(N_1)$.

Not both manifolds can be points, since otherwise one 
can show that $M$ is homeomorphic to a sphere. 
If one endows the unit normal bundle $\nu^1(N_i)$ with its natural 
connection metric, then Grove and Gromoll show in a next step 
that the map  $\sigma_i\colon \nu^1(N_i)\rightarrow N_j$, 
$v\mapsto \exp(\pi/2 v)$ is a Riemannian submersion, $\{i,j\}=\{1,2\}$.
 Furthermore $\sigma_i$ restricts to a Riemannian submersion 
$\nu^1_q(N_i)\rightarrow N_j$ for all $q\in N_i$.
  
In the simply connected case one shows that $N_i$ is simply connected 
as well, $i=1,2$. By the rigidity of submersions defined on Euclidean 
spheres (Theorem~\ref{thm: hopf}) we deduce that $N_i$ is 
either a point or a rank one symmetric space with diameter $\pi/2$.  
Going back to the definition of  $N_1$, it is then 
easy to see that $N_1=\{p\}$. 
Using that $\sigma_1\colon \Sph^{n-1}\rightarrow N_2$ 
is submersion with totally geodesic fibers, 
one can show that the pull back metric $\exp^*_p g$ 
 on $B_{\pi/2}(0)\subset T_pM$ is determined by $\sigma_1$. 
Thus $M$ is isometric to a rank one symmetric space. 

In the non simply connected case one can show that either 
the universal cover is not a sphere and thereby symmetric or 
 $\dim(N_1)+\dim(N_2)=n-1$. In the latter case it is not hard to verify 
that $M$ has constant curvature one. 
\end{proof}
 
Since the proof of the differentiable sphere theorem 
for manifolds with $2$-positive curvature follows from a Ricci flow 
argument it is of course not surprising that it has 
a rigidity version as well.

\begin{thm}\label{thm: rigidity ricci flow}\label{thm: 2 nonnon}
 A simply connected compact manifold with $2$-nonnegative curvature 
operator satisfies one of the following statements.  
\begin{enumerate}
\item[$\bullet$] The normalized Ricci flow evolves the metric 
to a limit metric which is up to scaling is isometric to $\Sph^n$  or 
 $\CP^{n/2}$.
\item[$\bullet$] $M$ is isometric to an irreducible symmetric space. 
\item[$\bullet$] $M$ is isometric to nontrivial Riemannian product.
\end{enumerate}
\end{thm}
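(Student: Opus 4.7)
The plan is to run the normalized Ricci flow on $g$ and combine the convergence result of Theorem~\ref{thm: 2 pos} with Hamilton's strong maximum principle for systems. From the proof of Theorem~\ref{thm: 2 pos}, the cone $\mathcal{C}\subset S_B^2(\so(n))$ of $2$-nonnegative curvature operators is invariant under the ODE $R'=R^2+R^\#$, and hence by Hamilton's maximum principle it is preserved by the flow. The strong maximum principle for systems then yields a dichotomy: either $R_t$ enters the interior $\mathrm{int}(\mathcal{C})$ of strictly $2$-positive operators instantaneously for $t>0$, or it remains on $\partial\mathcal{C}$ throughout a positive time interval.

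In the first branch, one applies Theorem~\ref{thm: 2 pos} to $g_{\varepsilon}$ and concludes that the normalized flow evolves $g$ to a round metric of constant positive sectional curvature on $\Sph^n$, which is the first alternative with $\Sph^n$ limit.

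In the second branch, the pointwise null subspace $E_p\subseteq\Lambda_2 T_pM$, spanned by the eigen-$2$-forms whose eigenvalues realize the vanishing minimum sum, persists in time and, by Hamilton's strong maximum principle in the spirit of his analysis for nonnegative Ricci and nonnegative curvature operator, assembles into a smooth subbundle of $\Lambda_2 TM$ that is parallel with respect to the Levi-Civita connection. This parallel structure forces a reduction of the holonomy of $(M,g)$. If $E$ is induced by a proper parallel distribution on $TM$, the de Rham decomposition produces the nontrivial Riemannian product of the third alternative; otherwise $(M,g)$ is irreducible.

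In the irreducible case, Berger's holonomy classification applies. The Ricci-flat special holonomies (Calabi--Yau, hyperK\"ahler, $G_2$, $\Spin(7)$) are excluded because a compact simply connected manifold with $\Ric\equiv 0$ and $2$-nonnegative curvature operator must be flat, contradicting irreducibility in positive dimension. What remains is either a symmetric holonomy -- in which case simple connectedness upgrades locally symmetric to globally symmetric and gives the second alternative -- or $\U(n/2)$ holonomy. In the K\"ahler case, the K\"ahler-Ricci flow preserves the parallel K\"ahler form and, by a Mok-type rigidity argument for nonnegative holomorphic bisectional curvature, evolves the metric to the Fubini-Study metric on $\CP^{n/2}$, which is the $\CP^{n/2}$ case of the first alternative. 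The main obstacle is this final K\"ahler step: one must arrange that the parallel $2$-form produced by the strong maximum principle allows the B\"ohm-Wilking transformation law for $R^2+R^\#$ to descend to an invariant pinching family on the $\U(n/2)$-reduced curvature operator, and then identify the resulting K\"ahler limit with Fubini-Study rather than with an irreducible Hermitian symmetric space.
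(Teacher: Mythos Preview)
Your overall architecture---dichotomy via the strong maximum principle, then Berger's holonomy classification in the rigid case---matches the paper's, but the passage from ``stays on $\partial\mathcal{C}$'' to ``parallel subbundle $E\subset\Lambda_2TM$'' is a genuine gap. Hamilton's strong maximum principle produces a parallel kernel for a \emph{nonnegative} operator; there is no analogous statement for the boundary face $\lambda_1+\lambda_2=0$ of the $2$-nonnegative cone when $\lambda_1<0<\lambda_2$. The paper (following Ni and Wu) closes this gap by first reducing to the nonnegative case: one checks that the reaction term satisfies $q(R)\ge 2\lambda_1(R)^2$, so wherever $\lambda_1<0$ the ODE pushes $R$ strictly into the interior of $\mathcal{C}$, and a first-order argument plus the strong maximum principle then gives $2$-positivity everywhere for small $t>0$. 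Thus the only way to remain on $\partial\mathcal{C}$ is to have $\lambda_1\equiv 0$, i.e.\ a nonnegative curvature operator, and \emph{then} Hamilton's strong maximum principle for the kernel of $R$ applies and yields non-generic holonomy directly.

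Two further points. First, your holonomy case analysis omits $\Sp(1)\Sp(n/4)$, which is not Ricci-flat; the paper disposes of it by noting that the sectional curvature is in fact positive (since $R_t>0$ in the non-split, non-symmetric case) and invoking Berger's theorem that a positively curved quaternionic K\"ahler manifold is isometric to $\HP^{n/4}$, hence symmetric. Second, in the K\"ahler case you do not need a new pinching family: once you know $R_t>0$ (hence positive sectional, hence positive bisectional curvature), the Frankel conjecture (Mori, Siu--Yau) gives the biholomorphism to $\CP^{n/2}$, and Chen--Tian then supply convergence of the normalized Ricci flow to Fubini--Study. Your ``main obstacle'' dissolves once the Ni--Wu reduction puts you in the strictly positive-operator regime on the K\"ahler side.
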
 
Of course in the last case the factors of $M$ have nonnegative 
curvature operators. By Theorem~\ref{thm: deforming}
$(M,g)$ 
admits a possibly different metric $g_1$ such that $(M,g_1)$ 
is locally isometric to $(M,g)$ and $(M,g_1)$ is finitely covered 
by a Riemannian product $T^d\times M'$ where $M'$ is simply connected and compact.
This effectively gives a reduction to the simply connected case.

The theorem has many names attached to it. Of course 
Theorem~\ref{thm: boehm wilking} 
(Hamilton [1982,1986], B\"ohm and Wilking [2006])  enters as the 'generic'
case. This in turn was used by Ni and Wu [2006] to reduce the problem 
from $2$-nonnegative curvature operators to nonnegative curvature 
operators.
One has to mention  Gallot and Meyer's [1975] investigation of manifolds 
with nonnegative curvature operator using the Bochner technique. 
Berger's classification of holonomy groups, 
 as well as
 Mori's [1979], Siu and Yau's [1980] solution of the Frankel conjecture 
are key tools. Based on this
Chen and Tian [2006] proved convergence of 
the Ricci flow for compact K\"ahler manifolds with positive bisectional curvature.

\begin{proof}[Sketch of a proof of Theorem~\ref{thm: 2 nonnon}.]  
Consider first the case that the curvature operator of $M$ is not 
nonnegative. We claim that then the Ricci flow immediately 
evolves 
$g$ to a metric with $2$-positive curvature operator. 

We consider a short time solution $g(t)$  of  the Ricci flow and 
let $f\colon [0,\eps)\times M\rightarrow \R$, denote the function 
which assigns to $(t,p)$ the sum of the lowest two eigenvalues  
of the curvature operator of $(M,g(t))$ at $p$. 
We first want to show that $f(t,\cdot)$ is positive somewhere for small $t>0$.
We may assume that $f(0,p)=0$ for all $p$. 
It is straightforward to check that $f$ satisfies 
\[
\tfrac{\partial f}{\partial t}_{|t=0_+}(0,p)\ge q(R):=\tfrac{\partial}{\partial t}_{|t=0_+} (\lambda_1+\lambda_2)\bigr(R+t(R^2+R^\#)\bigl).
\]
From the invariance of $2$-nonnegative curvature operators 
 it is known that $q(R)\ge 0$.
In fact a detailed analysis of the proof shows that 
$q(R)\ge 2 (\lambda_1(R))^2$. 
In the present situation we deduce by a first order argument that $f(t,p)$ 
becomes positive somewhere for small $t>0$. Now it is not hard to establish 
a strong maximum principle that shows that $f(t,\cdot)$ is everywhere positive for small 
$t>0$,
see Ni and Wu [2006].
 In other words $(M,g_t)$ has $2$-positive curvature operator for $t>0$ 
and the result follows from Theorem~\ref{thm: boehm wilking}.

We are left with the case that the curvature operator of $(M,g)$ is nonnegative.
Essentially this case was already treated by Gallot and Meyer using the Bochner technique,
 see [Petersen, 2006]. 
We present a slightly different argument following Chow and Yang (1989).
Using Hamilton's [1986] strong maximum principle one deduces that for  $t>0$ the curvature 
operator of  
$(M,g_t)$ has constant rank and that the kernel is parallel. Thus either 
$\Rc_t$ is positive or the holonomy is non generic. We may assume that $M$ does not split 
as a product. Hence without loss of generality $M$ is irreducible with non generic holonomy. 
Since $(M,g_t)$ clearly has positive scalar curvature Berger's classification of holonomy groups
 implies 
that Hol$(M)\cong \gU(n/2),\Sp(1)\Sp(n/4)$ unless $(M,g)$ is a symmetric space.  In the  
case of Hol$(M)\cong \Sp(1)\Sp(n)$ we can employ another theorem of Berger [1966]
to see that $M$ is up to scaling isometric to 
$\HP^{n/4}$, since in our case the sectional curvature of $(M,g_t)$ is positive. 
 In the remaining case Hol$(M)=\gU(n/2)$ 
it follows that $M$ is K\"ahler and $(M,g_t)$ has positive (bi-)sectional 
curvature. By Mori [1979] and Siu and Yau's [1980] solution of the Frankel conjecture 
$M$ is biholomorphic to $\CP^{n/2}$. 
In particular, $M$ admits a K\"ahler Einstein metric. 
 Due to work 
of Chen and Tian [2006] it follows, that the normalized Ricci flow on $M$ 
converges to the Fubini study metric which completes the proof. 
\end{proof}

\section{Compact nonnegatively curved manifolds}\label{sec: cpt nonneg}

The most fundamental obstruction to this date is Gromov's Betti number theorem.
\begin{thm}[Gromov, 1981]\label{thm: gromov} Let $M^n$ be an $n$-dimensional complete manifold
with nonnegative sectional curvature, and let $\fF$ be a field. Then the total Betti
number satisfies 
 \[
 b(M,\fF):=\sum_{i=0}^n b_i(M,\fF)\;\le\; 10^{10n^4}.
 \] 
\end{thm}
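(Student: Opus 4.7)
The plan is to combine Toponogov's triangle comparison with critical-point theory for distance functions and a recursive estimate on the Betti numbers of balls. First I would set up the \emph{isotopy lemma}: if the closed annulus $A(r_1,r_2)=\{x\in M:r_1\le d(p,x)\le r_2\}$ contains no critical points of $d(p,\cdot)$, then the gradient-like vector field construction recalled before the diameter sphere theorem produces a deformation retraction of $B_{r_2}(p)$ onto $B_{r_1}(p)$. Hence the topology of the sublevel sets of $d(p,\cdot)$ can only change at critical values, and any control on the location and number of critical points translates into control on the homology.

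The second ingredient is a \emph{Toponogov critical-point estimate}. Using triangle comparison against $\R^2$, I would show that if $q_1,q_2$ are both critical for $d(p,\cdot)$ with $d(p,q_1)\le d(p,q_2)$, then $d(q_1,q_2)\ge c\,d(p,q_1)$ for some universal $c=c(n)>0$; otherwise a minimal geodesic from $q_1$ towards $q_2$ would make too small an angle with every minimal geodesic from $q_1$ to $p$, violating criticality at $q_1$. This forces the critical points of $d(p,\cdot)$ in any annulus $A(r,2r)$ to be $cr$-separated, so their number is bounded by a dimensional constant $D(n)$. A parallel packing argument, again based on Toponogov, shows that every ball $B_r(p)$ can be covered by at most $N(n)$ balls of radius $r/5$.

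The heart of the proof is a recursive estimate for the \emph{content} of a ball. For $B=B_r(p)$ and coefficient field $\fF$, define
\[
\cont(B)\;=\;\mathrm{rank}\bigl(H_*(B;\fF)\to H_*(B_{5r}(p);\fF)\bigr).
\]
Since $B_R(p)\supset M$ for $R\ge \diam(M)$ one has $b(M;\fF)\le \cont(B_R(p))$. Combining the isotopy lemma and the critical-point estimate with a Mayer--Vietoris argument over a cover of $B$ by $N(n)$ balls of radius $r/5$ yields a recursion of the shape
\[
\cont(B)\;\le\; N(n)\cdot \max_{B'}\cont(B')\;+\;D(n),
\]
where $B'$ ranges over balls of radius $r/5$ lying in $B_{5r}(p)$. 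The scale invariance of the curvature hypothesis $K\ge 0$ is essential here, as it makes the recursion self-similar at every scale.

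The main obstacle is ensuring that this recursion actually terminates with an explicit dimensional bound. One must attach to each configuration of balls a complexity --- essentially Gromov's \emph{corank}, measuring the number of independent directions along which critical points of the various distance functions can still accumulate --- and verify that this complexity drops at each recursive step. After $O(n)$ subdivisions one reaches balls that are forced to be contractible, at which point the content is one, and unwinding the recursion yields an explicit bound of the form $\cont(M)\le C(n)$ growing no faster than $10^{10n^4}$. The delicate point is the combinatorial bookkeeping: any looseness in $c(n)$, $D(n)$ or $N(n)$ is amplified through the recursive levels, so these constants must be extracted from Toponogov's comparison with genuine care, and the drop in complexity at each step must be verified rather than assumed.
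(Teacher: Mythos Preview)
Your proposal assembles the right ingredients---Toponogov, critical-point theory, the content functional, and a covering argument---but the recursion you write down does not terminate, and the role of the corank is mischaracterized. The inequality $\cont(B_r)\le N(n)\max_{B'}\cont(B')+D(n)$, iterated at each scale $r\mapsto r/5$, gives a bound of order $N(n)^L$ where $L$ is the number of iterations; nonnegative curvature provides no lower injectivity-radius bound, so there is no a priori dimensional control on $L$. Your claim that ``after $O(n)$ subdivisions one reaches balls that are forced to be contractible'' is exactly what needs to be proved, and it does not follow from anything you have set up. Relatedly, your Toponogov estimate is not the right one: the comparison does not show that two critical points of $d(p,\cdot)$ in the same annulus $A(r,2r)$ are $cr$-separated. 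What it shows is that if $q_1,\ldots,q_k$ are critical for $d(q,\cdot)$ with $d(q,q_{i+1})\ge 2^n d(q,q_i)$, then the initial directions of minimal geodesics from $q$ to the $q_i$ are pairwise at angle at least $\pi/2-2^{-n}$, and hence $k\le 2n$ by a packing argument in $T_qM$. This bounds the number of \emph{geometric scales} at which critical points can occur, not the number of critical points at one scale.

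The actual mechanism that terminates the recursion is a two-case dichotomy governed by the corank. The corank of $B_r(p)$ is the largest $k$ such that \emph{every} $q\in B_{2r}(p)$ admits $k$ critical points of $d(q,\cdot)$ at geometrically growing distances $\ge 2^{n+3}r$; by the packing estimate above it is at most $2n$, and one always has $\corank(B_\rho(q))\ge\corank(B_r(p))$ for $q\in B_{3r/2}(p)$, $\rho\le r/4$. In the induction step one asks whether the corank is \emph{strictly} larger on every subball of radius $r/4^n$. If so, one covers $B_r(p)$ by boundedly many such subballs and invokes the (reverse) induction hypothesis on corank---this is the only place a multiplicative constant enters, and it can happen at most $2n$ times. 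If not, some $x\in B_{2\rho}(q)$ has no critical point of $d(x,\cdot)$ in a large annulus, and the isotopy lemma lets one deform $B_r(p)$ into $B_{r/4}(x)$ inside $B_{5r}(p)$, giving $\cont(B_r(p))\le\cont(B_{r/4}(x))$ with \emph{no} multiplicative loss. It is this second branch that prevents the blow-up in your scheme. Note also that the corank \emph{increases} on subballs and is bounded above; it does not ``drop at each recursive step'' as you wrote.
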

 Gromov's original bound on the total Betti number was depending double exponentially
 on the dimension. The improvement is due to Abresch [1987]. However, this  bound is not optimal
 either. In fact Gromov posed the problem  whether the best possible bound is $2^n$, 
 the total Betti number of the $n$-dimensional torus. 
The statement is particularly striking since the nonnegatively curved 
manifolds in a fixed dimension $\ge 7$ have infinitely many homology types with respect 
to integer coefficients. More generally Gromov gave explicit estimates 
for the total Betti numbers of compact $n$-manifolds with curvature $\ge -1$ and 
diameter $\le D$.
The proof is an ingenious combination of Toponogov's theorem and critical point 
theory.
\begin{proof}[Sketch of the proof of Theorem~\ref{thm: gromov}.]
 The most surprising part in the proof is  a definition: 
Gromov assigns to every ball $B_r(p)\subset M$ a finite number 
called the corank of the ball. It is defined as the maximum over all 
$k$ such that 
for all $q\in B_{2r}(p)$ there are  points $q_1,\ldots,q_k$ 
with 
\[
d(q,q_1)\ge 2^{n+3}r,\,\, d(q,q_{i+1})\ge 2^n d(p,q_i)
\] 
and $q_i$ is a critical point of the distance function of $q$ in the sense 
of Grove and Shiohama. One can show as follows that 
the corank of a ball is at most $2n$: Choose a minimal
geodesic $c_{ij}$ from $q_i$ to $q_j$, $i<j$ 
and minimal geodesic $c_i$ from $q$ to $q_i$, $i=1,\ldots,k$.
Since $q_i$ is a critical point we can find a possibly different minimal 
geodesic $\tilde c_i$ from $q$ to $q_i$ such that the angle  
of the triangle $(\tilde c_i,c_j,c_{ij})$ based at $q_i$ is $\le \pi/2$. 
Therefore $L(c_j)^2\le L(c_{ij})^2+L(c_i)^2$. 
Applying Toponogov's theorem to the triangle $(c_i,c_j,c_{ij})$ gives that 
the angle $\varphi_{ij}$ between $c_i$ and $c_j$ 
satisfies $\tan(\varphi_{ij})\ge 2^n$. Thus $\varphi_{ij}\ge \pi/2-2^{-n}$. 
 The upper bound on $k$ now follows from an Euclidean sphere packing argument
in $T_qM$.

By reverse induction on the corank, one establishes an estimate
 for the content of a 
ball  $\cont(B_r(p))$ which is defined as
the dimension of the image of $H_*(B_r(p))$ in $H_*(B_{5r}(p))$. 
A ball $B_r(p)$ with maximal corank is necessarily contractible in $B_{5r}(p)$ since for some
$q\in B_{2r}(p)$ the distance function of $q$ 
 has no critical points in $B_{8r}(q)\setminus\{q\}$.
This establishes the induction base.
 It is immediate from the definition that 
 $\corank(B_{\rho}(q)) \ge \corank(B_{r}(p))$ 
for all $q\in B_{3r/2}(p)$ and all $\rho \le r/4$.
 In the induction step one  distinguishes between two cases. 

In the first case, one assumes that $\corank(B_{\rho}(q)) > \corank(B_{r}(p))$ 
for all $q\in B_r(p)$ and $\rho:=\tfrac{r}{4^n}$. 
Using the Bishop Gromov inequality it is easy to find 
a covering of $B_r(q)$ with at most $4^{n(n+2)}$ balls of radius 
$\rho$. By the induction hypothesis the balls $B_\rho(q)$
have a bounded content. Using a rather involved nested covering 
argument one can give an explicit estimate of the content of $B_r(p)$.

In the remaining case there is one point $q\in B_{r}(p)$ such that 
$\corank(B_{\rho}(q))=\corank(B_r(p))$ with $\rho= \tfrac{r}{4^n}$. 
Thus for some point $x\in B_{2\rho}(q)$ there is no critical point 
of the distance function of $x$ in $B_{8r}(x)\setminus B_{2^{-n+3}r}(x)$.
This implies that one can homotop $B_r(p)$ to a subset of $B_{r/4}(x)$ in $B_{5r}(p)$.
  From this it is not hard 
to deduce that $\cont(B_{r/4}(x))\ge \cont(B_r(p))$. 
We have seen above $\corank(B_{r/4}(x))\ge \corank(B_r(p))$. 
One can now apply the same argument again with $B_r(p)$ replaced by 
$B_{r/4}(x)$. Since small balls are contractible,
 the process has to stop after finitely many 
steps unless possibly $\cont(B_r(p))=1$.
\end{proof}

{\bf Fundamental groups.} 
Fundamental groups of 
nonnegatively curved manifolds are rather well understood. 
On the other hand, 
the known results are essentially the same as 
for compact manifolds with nonnegative Ricci curvature.
 In fact there is a general belief that the general structure 
results for fundamental groups should coincide for the two classes.
One of the main tools in this context is the splitting theorem 
 of Toponogov, resp. the splitting theorem of Cheeger and Gromoll [1971]. 
Recall that a line is a normal geodesic $c\colon \R\rightarrow (M,g)$
satisfying $d(c(t),c(s))=|t-s|$ for all $t,s\in\R$. 
By Cheeger and Gromoll's splitting theorem 
complete manifolds of nonnegative Ricci curvature 
split as products $\R\times M'$ provided they contain lines. 
In the special case of nonnegative sectional curvature, the result is 
due to  
Toponogov.

By the work of Cheeger and Gromoll [1971], 
the splitting 
theorem implies that a nonnegatively curved manifold $M$
is isometric to $\R^k\times B$ where $B$ has a compact isometry group. 
The same results holds for the universal cover 
of a compact manifold $M$ of nonnegative Ricci curvature. 
As a consequence they deduced  
that the fundamental group of $M$ is virtually abelian, i.e., it contains an abelian
subgroup of finite index. Moreover one can show 

\begin{thm}\label{thm: deforming} Let $(M,g)$ be a compact manifold 
of nonnegative Ricci curvature or an open manifold of 
nonnegative sectional curvature. Then there is a 
family of complete metrics $g_t$ on $M$ with 
$g_0=g$, $(M,g_t)$ is locally isometric to $(M,g)$ 
for all $t$ and $(M,g_1)$ 
is finitely covered by a Riemannian product 
$T^d\times M'$, where $M'$ is simply connected and 
$T^d$ is a flat torus. 
\end{thm}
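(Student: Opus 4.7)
My plan is to pass to the universal cover $\tilde M$, split off the Euclidean factor via the Cheeger--Gromoll splitting theorem to write $\tilde M = \R^k \times M'$ with $M'$ containing no line, and then \emph{untwist} the deck group action on the $M'$-factor via a $\tilde M$-diffeomorphism that depends linearly on the $\R^k$-coordinate. Under either hypothesis $M'$ has compact isometry group: in the compact nonnegative Ricci case $M'$ is itself compact, while in the open nonnegative sectional case the soul theorem realises $M'$ as the total space of a disk bundle over a compact soul $\Sigma$ preserved by every isometry, so that restriction embeds $\Iso(M')$ into the compact group $\Iso(\Sigma)$. In particular $\Iso(\tilde M) = \Iso(\R^k) \times \Iso(M')$, and every deck transformation decomposes as $\gamma = (\alpha_\gamma, \rho(\gamma))$.

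By the virtual abelianness of $\Gamma := \pi_1(M)$ already quoted above, and a Bieberbach-type argument applied to the (discrete) projection of $\Gamma$ into $\Iso(\R^k)$, one finds a finite-index normal subgroup $\Gamma_0 \lhd \Gamma$ whose image in $\Iso(\R^k)$ is a lattice $L$ of pure translations $v \mapsto v + v_\gamma$, and whose image $\rho(\Gamma_0)$ lies in $\Iso(M')^0$. Since $\rho(\Gamma_0)$ is finitely generated abelian, its closure is a torus $T \subset \Iso(M')^0$ with Lie algebra $\Lt$. The crucial construction is a Lie group homomorphism $\psi : \R^k \to T$ extending $\rho|_{\Gamma_0}$, obtained by choosing lifts in $\Lt$ of $\rho$ evaluated on a $\Z$-basis of $L$, extending linearly to $\R^k$, and exponentiating. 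With this in place, define the smooth family of diffeomorphisms
\[
\Phi_t \colon \tilde M \to \tilde M, \qquad \Phi_t(x,y) = \bigl(x,\, \psi(tx)^{-1}y\bigr),
\]
with $\Phi_0 = \id$, and put $\tilde g_t := \Phi_t^* \tilde g$. Since each $\Phi_t$ is a global diffeomorphism, $\tilde g_t$ is globally, hence locally, isometric to $\tilde g$.

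A direct calculation exploiting that $\psi$ is a homomorphism into the abelian group $T$ gives, for $\gamma \in \Gamma_0$,
\[
\Phi_t \gamma \Phi_t^{-1} = \bigl(v \mapsto v + v_\gamma,\; \psi(tv_\gamma)^{-1}\rho(\gamma)\bigr) \in \Iso(\tilde g),
\]
so $\Gamma_0$ acts by isometries of $\tilde g_t$, and $\tilde g_t$ descends to a metric on the finite cover $\tilde M/\Gamma_0$ of $M$. At $t=1$ we have $\psi(v_\gamma) = \rho(\gamma)$ by construction, so the $M'$-component of this conjugated action is trivial, exhibiting $(\tilde M,\tilde g_1)/\Gamma_0$ as $(\R^k/L)\times M' = T^d \times M'$ with $d = \rank L$.

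The principal remaining obstacle is to extend the $\Gamma_0$-invariance of $\tilde g_t$ to full $\Gamma$-invariance so that $\tilde g_t$ actually descends to $M$ itself. The finite quotient $\Gamma/\Gamma_0$ acts on $\R^k$ by a group of affine isometries normalising the translation lattice $L$, and on $T$ by conjugation; compatibility of $\psi$ with this action has to be arranged, for instance by averaging the linear lift of $\psi$ over $\Gamma/\Gamma_0$ after enlarging $\Gamma_0$ so that the image of $\rho$ lies in a torus invariant under the relevant outer action. A minor secondary point is that in the open case one must verify discreteness of the projection $\Gamma \to \Iso(\R^k)$ so that the Bieberbach step is valid; this follows because the kernel of the projection lies in $\{1\}\times \Iso(M')$ and is therefore finite by the proper discontinuity of $\Gamma$ on $\tilde M$. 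I expect the equivariance in the $\Gamma/\Gamma_0$ step to be the main technical subtlety.
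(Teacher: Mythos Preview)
The paper does not give a proof of this theorem; it simply attributes the result to [Wilking, 2000] (refining Cheeger--Gromoll [1971]) and moves on. So there is no in-paper argument to compare against directly.

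That said, your overall strategy is the standard one and is essentially what is carried out in the cited reference: split $\tilde M=\R^k\times M'$ via Cheeger--Gromoll, write each deck transformation as $(\alpha_\gamma,\rho(\gamma))$ using compactness of $\Iso(M')$, pass to a finite-index $\Gamma_0$ acting by translations on the first factor and through a torus $T$ on the second, and untwist via $\Phi_t(x,y)=(x,\psi(tx)^{-1}y)$ for a homomorphism $\psi\colon\R^k\to T$ with $\psi|_L=\rho|_{\Gamma_0}$. Your computation that $\Phi_t\gamma\Phi_t^{-1}$ is a product isometry for $\gamma\in\Gamma_0$, trivial on $M'$ at $t=1$, is correct and is the heart of the construction.

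Two points deserve tightening. First, your argument for compactness of $\Iso(M')$ in the open case is not right as stated: the restriction map $\Iso(M')\to\Iso(\Sigma)$ is generally not injective (think of isometries acting only on the normal directions, e.g.\ the fibrewise antipodal map on a vector bundle). The correct input is precisely the Cheeger--Gromoll statement already quoted in the paper just above the theorem, namely that the line-free factor has compact isometry group; you may simply invoke that. Second, the obstacle you flag---arranging $\Gamma/\Gamma_0$-equivariance of $\psi$ so that $\tilde g_t$ descends to $M$ rather than merely to a finite cover---is indeed the genuine technical content. Your averaging suggestion is on the right track, but note that averaging lifts $\tilde\psi$ in $\Lt$ preserves the constraint $\exp\tilde\psi(v_\gamma)=\rho(\gamma)$ only modulo $\ker(\exp)$, and an average of lattice elements need not lie in the lattice. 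One must first pass to a further finite-index subgroup and choose the torus $T$ to be invariant under conjugation by $\rho(\Gamma)$, exactly as you hint; working this out carefully is the substance of the cited paper.
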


The theorem is due to author [2000] but is based 
on a slightly weaker version of Cheeger and Gromoll [1971]. 
Moreover, it was shown in [Wilking, 2000] that
any finitely generated virtually abelian fundamental group occurs in some dimension
as the fundamental group of a nonnegatively curved manifold. 
However, the more interesting and challenging problem 
is what one can say about fundamental groups in a fixed dimension. 

To the best of the authors knowledge 
the only other ''effective'' result known 
for fundamental groups of nonnegative sectional 
curvature is

\begin{thm}[Gromov, 1978] The fundamental group 
of a nonnegatively curved $n$-manifold is generated 
by at most $n\cdot 2^{n}$ elements.
\end{thm}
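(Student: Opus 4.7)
My plan is to run the classical \emph{short basis} construction for $\Gamma=\pi_1(M)$ acting on the Riemannian universal cover $\widetilde M$, and then convert the resulting generators into a collection of unit tangent vectors at a base point whose pairwise angles are bounded below by $\pi/3$. A sphere-packing estimate on $S^{n-1}$ will then cap the number of such vectors.

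Concretely, I would fix a base point $\tilde p\in\widetilde M$ and let $\Gamma$ act by deck transformations. Since the action is properly discontinuous and by isometries, the set $\{\gamma\in\Gamma:d(\tilde p,\gamma\tilde p)\le R\}$ is finite for every $R>0$, so infima of displacements over any non-empty subset of $\Gamma$ are attained. I would then define $\gamma_1,\gamma_2,\ldots$ inductively by letting $\gamma_k$ minimize $d(\tilde p,\gamma\tilde p)$ among all $\gamma\in\Gamma\setminus\langle\gamma_1,\ldots,\gamma_{k-1}\rangle$. By construction $\langle\gamma_1,\ldots,\gamma_k\rangle$ is strictly increasing until it fills $\Gamma$, so once the procedure terminates the $\gamma_k$ generate $\Gamma$; a bound on the length of this sequence therefore bounds the number of generators.

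The geometric core is the angle estimate. For $i<j$ one has $\gamma_i\in\langle\gamma_1,\ldots,\gamma_{j-1}\rangle$ but $\gamma_j\notin\langle\gamma_1,\ldots,\gamma_{j-1}\rangle$, hence $\gamma_i^{-1}\gamma_j\notin\langle\gamma_1,\ldots,\gamma_{j-1}\rangle$, and the minimality in step $j$ forces
\[
d(\tilde p,\gamma_j\tilde p)\;\le\;d(\tilde p,\gamma_i^{-1}\gamma_j\tilde p)\;=\;d(\gamma_i\tilde p,\gamma_j\tilde p).
\]
Thus in the triangle with vertices $\tilde p,\gamma_i\tilde p,\gamma_j\tilde p$ the side opposite $\tilde p$ is the longest, so the Euclidean law of cosines forces the angle at $\tilde p$ in the Euclidean comparison triangle to be $\ge\pi/3$; since $\widetilde M$ has $K\ge 0$, Toponogov's theorem transfers this lower bound to the actual angle at $\tilde p$ between minimizing geodesics from $\tilde p$ to $\gamma_i\tilde p$ and to $\gamma_j\tilde p$.

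Choosing for each $k$ an initial unit tangent vector $v_k\in T_{\tilde p}\widetilde M$ of such a minimizing geodesic gives unit vectors with pairwise angle $\ge\pi/3$. A volume-comparison (sphere-packing) argument on $S^{n-1}$—disjoint spherical caps of angular radius $\pi/6$ around the $v_k$—then caps their number by an explicit function of $n$; the sharper bookkeeping needed to reach Gromov's $n\cdot 2^n$ can be obtained by a slightly refined packing argument on $S^{n-1}$. This both proves the process terminates (so $\Gamma$ is finitely generated) and yields the stated bound. I expect the main obstacle to be cosmetic rather than conceptual: verifying that the infima in the short-basis construction are genuinely attained (proper discontinuity), and squeezing the sphere packing estimate down from the crude bound $3^n$ that disjoint unit balls yield to the claimed $n\cdot 2^n$; the curvature hypothesis itself enters only through the single application of Toponogov's theorem described above.
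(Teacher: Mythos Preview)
Your proposal is correct and is precisely the argument the paper has in mind: the paper gives no detailed proof but simply remarks that the theorem is ``a simple application of Toponogov's theorem applied to the short generating system of $\pi_1(M,p)$,'' which is exactly your short-basis construction followed by the $\pi/3$ angle estimate and sphere packing. Your own caveat is accurate---the only non-automatic step is tightening the crude $3^n$ packing bound to $n\cdot 2^n$, which requires a slightly more careful estimate on $S^{n-1}$ but involves no new geometric input.
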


The proof of the theorem is a simple 
application of Toponogov's theorem 
applied to the short generating system of $\pi_1(M,p)$.

Although we mentioned in the introduction 
that we will report on results which are based on collapsing techniques, 
we quote, for the sake of completeness, 
the following recent theorem 
of Kapovitch, Petrunin and Tuschmann [2005].

\begin{thm}\label{thm: kpt} For each $n$ there is a constant $C(n)$ such that the
fundamental group of any compact nonnegatively curved  $n$-manifold $(M,g)$ 
contains a nilpotent subgroup of index at most $C(n)$.
\end{thm}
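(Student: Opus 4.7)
The plan is to argue by contradiction via equivariant Gromov-Hausdorff convergence of the universal covers, using nonnegative curvature to force the identity component of the limit isometry group to be nilpotent, and then transferring this nilpotency back via a Margulis-type argument.

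Suppose no such $C(n)$ exists; then there is a sequence $(M_k,g_k)$ of compact nonnegatively curved $n$-manifolds for which the minimal index of a nilpotent subgroup of $\pi_1(M_k)$ diverges to infinity. Rescale so that $\diam(M_k,g_k)=1$ and choose short generators of $\pi_1(M_k,p_k)$ as in the preceding theorem of Gromov, so that $\pi_1(M_k)$ is generated by at most $n\cdot 2^n$ elements of displacement $\le 2$ at a chosen lift $\tilde p_k$ in the universal cover. The pointed spaces $(\tilde M_k,\tilde p_k)$ are then complete nonnegatively curved $n$-manifolds supporting isometric $\pi_1(M_k)$-actions with uniformly bounded generating sets.

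By Gromov's compactness theorem a subsequence of $(\tilde M_k,\tilde p_k)$ converges in the pointed Gromov-Hausdorff topology to a complete Alexandrov space $(Y,p_\infty)$ of curvature $\ge 0$ and dimension $\le n$, and by the equivariant Gromov-Hausdorff machinery of Fukaya-Yamaguchi one extracts simultaneously a limit group $G_\infty\le \Iso(Y)$ acting cocompactly on $Y$. Since $Y$ has a lower curvature bound, $G_\infty$ is a Lie group. Nonnegative curvature of $Y$ enters through the splitting theorem for nonnegatively curved Alexandrov spaces: $Y$ splits isometrically as $\R^m\times C$ with $C$ containing no line, and cocompactness of the action together with the soul-type structure of $C$ force the identity component $G_\infty^0$ to act by translations along the $\R^m$-factor together with commuting rotations in a torus hiding inside $C$; in particular $G_\infty^0$ is abelian, hence nilpotent.

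To finish, define $\Gamma_k\le \pi_1(M_k)$ to be the subgroup generated by those short generators whose displacement at $\tilde p_k$ tends to zero; these are precisely the elements whose GH limits lie in $G_\infty^0$. Since the number of remaining ``non-collapsing'' short generators is uniformly bounded by $n\cdot 2^n$, the index $[\pi_1(M_k):\Gamma_k]$ is at most a constant depending only on $n$, and it remains to show that $\Gamma_k$ itself is nilpotent for large $k$. This is the main obstacle: the groups $\Gamma_k$ are discrete and not literally subgroups of $G_\infty^0$, so one needs a quantitative Margulis-type statement asserting that a discrete group of isometries of a pointed nonnegatively curved $n$-manifold which is generated by elements of sufficiently small displacement is nilpotent. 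Proving such a Margulis lemma in the lower-sectional-curvature setting is exactly the technical core of the Kapovitch-Petrunin-Tuschmann argument, and requires a delicate induction on dimension, the Fukaya-Yamaguchi fibration theory for collapsing sequences with a lower curvature bound, and the local structure of Alexandrov spaces at regular points.
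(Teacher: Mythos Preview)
The paper does not actually prove this theorem. This is a survey article, and Theorem~\ref{thm: kpt} is simply quoted as a result of Kapovitch, Petrunin and Tuschmann [2005]; the only comment offered is that ``the proof relies on a compactness result'' and that it remains open whether $C(n)$ can be made effective. So there is no proof in the paper to compare against; your sketch is being measured against the original KPT paper, not this survey.

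That said, your outline does capture the correct architecture --- contradiction, rescaling, equivariant Gromov--Hausdorff limits of the universal covers, and a Margulis-type lemma as the core difficulty --- and you are honest that the last step is where the real work lies. But as written the sketch has two genuine gaps before one even reaches that core. First, your claim that $G_\infty^0$ is abelian is not justified: after splitting $Y\cong \R^m\times C$ with $C$ line-free, the identity component of $\Iso(C)$ is a priori an arbitrary compact Lie group, not a torus, so ``commuting rotations in a torus hiding inside $C$'' is an assertion, not an argument. (In the strictly nonnegatively curved manifold case one can instead appeal to the Cheeger--Gromoll splitting of $\tilde M_k$ itself to get an honest Euclidean factor carrying the deck action up to a compact piece, which is a cleaner route here.) Second, the index bound $[\pi_1(M_k):\Gamma_k]\le C(n)$ does not follow from the fact that only boundedly many short generators lie outside $\Gamma_k$: a bounded number of coset representatives among a generating set does not bound the index of a subgroup. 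One needs instead a bound on the number of $\Gamma_k$-cosets meeting a ball of fixed radius around $\tilde p_k$, which requires packing arguments together with control on how $\Gamma_k$-orbits sit inside $\pi_1(M_k)$-orbits.

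In short: your framework matches what the survey alludes to and what KPT actually do, but two of your intermediate claims are unproven as stated, and the final Margulis-type step --- which you correctly flag as the heart of the matter --- is where essentially all of the content of the cited paper resides.
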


The theorem remains valid for almost nonnegatively curved manifolds and it improves
a similar theorem of Fukaya and Yamaguchi from ''solvable'' to ''nilpotent''.
The proof relies on a compactness result and it remains an open problem whether one
can make the bound effective, in other words whether one can give explicit estimates
 on $C(n)$. It is also remains open whether in case of nonnegative curvature 
one can improve it from ''nilpotent'' to ''abelian''.

{\bf Other structure results.} 
%
%
By the Gauss-Bonnet formula 
a compact nonnegatively curved compact  surface is given by 
$\RP^2$, $\Sph^2$, $T^2$ or the Klein bottle. 
Due to Hamilton [1982] a
 compact $3$-manifold of nonnegative Ricci curvature 
and finite fundamental group is diffeomorphic 
to spherical space form, see Theorem~\ref{thm: 2 nonnon}.
In dimension $4$ a classification remains open. 
The best result is a theorem in Kleiner's thesis 
\begin{thm}[Kleiner]\label{thm: kleiner} Let $(M,g)$ be a nonnegatively curved 
simply connected $4$-manifold. If the isometry group is not 
finite then $M$ is homeomorphic to $\Sph^4,\CP^2$, $\Sph^2\times \Sph^2$ 
or to a connected sum $\CP^{2}\#\pm\CP^2$.
\end{thm}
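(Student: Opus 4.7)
The plan is to use the non-finiteness of $\Iso(M,g)$ to extract an effective isometric circle action on $M$, and then combine equivariant topology (the classification of circle actions on simply connected $4$-manifolds) with the geometry of nonnegative curvature on the quotient $3$-orbifold to pin down the homeomorphism type. Throughout, $M$ is taken to be compact (all manifolds in the conclusion are closed).

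Since $M$ is compact, $\Iso(M,g)$ is a compact Lie group, and non-finiteness yields positive dimension; a maximal torus therein provides an effective isometric action of $\Sph^1$ on $M$. Each component of the fixed point set $F=\Fix(\Sph^1,M)$ is a closed totally geodesic submanifold of even codimension, hence of dimension $0$ or $2$. Simple connectivity forces $\chi(M)\ge 2$, and the standard identity $\chi(M)=\chi(F)$ for smooth circle actions then implies $F\neq\emptyset$ with $\chi(F)\ge 2$, giving a nontrivial configuration of fixed $2$-spheres and/or isolated fixed points to work with.

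Next I would analyse the quotient $X:=M/\Sph^1$, a $3$-dimensional Alexandrov space of curvature $\ge 0$ via the O'Neill formula on the principal part together with the general theory of Alexandrov quotients by isometric group actions. The $2$-dimensional components of $F$ appear as boundary strata of $X$, while isolated fixed points appear as cone points whose link is a $2$-sphere (possibly with orbifold cone points determined by the weights of the slice representation on $\R^4$). Simple connectivity of $M$, together with the lifting properties of the quotient map over the principal stratum, forces $X$ to be simply connected, and the structure theory of nonnegatively curved simply connected $3$-dimensional Alexandrov spaces (soul-type arguments and the control of boundary diameter versus interior extent via Toponogov) severely restricts $X$: it should be homeomorphic to $\Sph^3$ or to a $3$-disc, with the singular locus a finite graph whose combinatorics is pinned down by the slice representations.

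With the orbit data in hand, one invokes the Fintushel--Pao classification of smooth $\Sph^1$-actions on closed simply connected $4$-manifolds: any such $M$ is homeomorphic to a connected sum of copies of $\Sph^4$, $\pm\CP^2$ and $\Sph^2\times\Sph^2$. Gromov's Betti number theorem (Theorem~\ref{thm: gromov}) caps the number of summands a priori, and the identity $\chi(M)=\chi(F)$ combined with the enumeration of isolated fixed points versus fixed $2$-spheres narrows the list further. The main obstacle, and the hardest step, is the final reduction: ruling out connected sums such as $\CP^2\#\CP^2\#\CP^2$ or more complicated ones, which do admit smooth circle actions on the topological level. The expected mechanism is a Gauss-equation argument along the totally geodesic fixed surfaces (each then forced to have nonnegative intrinsic curvature, hence to be $\Sph^2$ in the simply connected case) combined with a Toponogov-style constraint on the mutual position of the cone points and boundary strata in the nonnegatively curved $3$-dimensional Alexandrov space $X$, showing that at most the configurations realised by $\Sph^4$, $\CP^2$, $\Sph^2\times\Sph^2$ and $\CP^2\#\pm\CP^2$ are geometrically admissible.
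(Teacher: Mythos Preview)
Your setup is sound: extracting an isometric $\Sph^1$-action, passing to the Alexandrov quotient $X=M/\Sph^1$, and reading off the fixed-point configuration is exactly how the paper begins. But the proposal has a genuine gap precisely at the point you flag as ``the main obstacle''. Invoking Fintushel--Pao plus Gromov's Betti number theorem does not help: Gromov's bound in dimension $4$ is astronomically larger than $2$, so it cannot rule out $\CP^2\#\CP^2\#\CP^2$ or any other long connected sum. Your ``expected mechanism'' (Gauss equation on fixed surfaces and a ``Toponogov-style constraint'') is too vague to carry weight; noting that each fixed $2$-component is an $\Sph^2$ says nothing about how many such components or isolated fixed points there can be.

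The paper sidesteps Fintushel--Pao entirely and instead proves directly that $\chi(M)\le 4$, after which Freedman's classification finishes the job. The missing geometric idea is a concrete estimate on the \emph{spaces of directions} at isolated fixed points in $X$: each such space is a quotient $\Sph^3/\Sph^1$, and the paper proves (Lemma~\ref{lem: trian}) that any four points in such a quotient have total pairwise distance at most $2\pi$, with diameter exactly $\pi/2$. This is then fed into a double-counting argument: if there were five isolated fixed points $p_1,\ldots,p_5$ in $X$, one sums the $30$ angles of the $\binom{5}{3}=10$ comparison triangles in two ways. Nonnegative curvature of $X$ forces the sum to be $\ge 10\pi$ (each triangle has angle sum $\ge\pi$), while the lemma applied at each $p_i$ forces the sum to be $\le 10\pi$ (six angles at each vertex sum to $\le 2\pi$). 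The resulting rigidity is then pushed to a contradiction. The cases with one or two $2$-dimensional fixed components are handled by separate convexity arguments (splitting of $X$, concavity of the distance to the boundary) combined again with Lemma~\ref{lem: trian}. None of this is visible in your outline, and it is the heart of the proof.
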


The Bott conjecture (see last section) 
would imply that the theorem remains 
valid if one removes the assumption on the isometry group. 
It would be interesting to know whether one can 
improve the conclusion in Theorem~\ref{thm: kleiner}
from homeomorphic to diffeomorphic. 
Kleiner never published his thesis, but 
Searle and Yang  [1994] reproved his result. 
We present a slightly shorter proof 
which has also the advantage that it does not make 
use of a signature formula of Bott for four manifolds with Killing fields. 
This in turn implies that part of the proof carries over to 
simply connected nonnegatively curved $5$-manifolds 
with an isometric $2$-torus action. In fact using minor modifications 
it is not hard to check that the second rational Betti number 
of such a manifold is bounded above by $1$.
\begin{lem}\label{lem: trian} Let $\rho\colon \gS^1\rightarrow \Or(4)$ 
be a representation 
such that there is no trivial subrepresentation. Consider 
the induced action of $\gS^1$ on the standard sphere $\Sph^3$. 
\begin{enumerate}
\item[a)]
Any four pairwise different 
points $p_1,\ldots,p_4\in B:=\Sph^3/\gS^1$ satisfy
\[
\sum_{1\le i<  j\le 4} d(p_i,p_j) \le  2 \pi.
\]
and equality occurs if and only if 
 $B$ is isometric to 
the $2$-sphere $\Sph^2(1/2)$ of constant curvature 
$4$ and $\{p_1,p_2,p_3,p_4\}=\{\pm p,\pm q\}$.
\item[b)] The diameter of $B$ is equal to $\pi/2$.
In fact for $p\in B$ there is most one point $q\in B$ 
with $d(p,q)\ge \pi/2$. 
\end{enumerate}
\end{lem}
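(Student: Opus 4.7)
The plan is to work in explicit coordinates. Since $\gS^1$ is connected, $\rho$ lands in $\SO(4)$, so after conjugation we may write $\R^4=\C^2$ and $\rho(e^{i\theta})(z_1,z_2)=(e^{im\theta}z_1,e^{in\theta}z_2)$ for integers $m,n$; the hypothesis rules out $m=0$ or $n=0$, and the kernel $\Z/\gcd(|m|,|n|)$ of $\rho$ acts trivially so we may assume $\gcd(m,n)=1$. The two $\rho$-invariant circles $\{z_2=0\}$ and $\{z_1=0\}$ descend to distinguished points $\bp,\bq\in B$; minimizing over $\theta$ in $\cos d_{\Sph^3}((e^{im\theta},0),(z_1,z_2))=\Real(e^{-im\theta}z_1)$ yields
\[
d_B([z],\bp)=\arccos|z_1|,\qquad d_B([z],\bq)=\arccos|z_2|,
\]
so these distances sum to $\pi/2$ and in particular $d_B(\bp,\bq)=\pi/2$. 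Writing the components in polar form,
\[
\cos d_B([z],[w])=\max_{\theta\in\R}\bigl(A\cos(m\theta+\mu)+B\cos(n\theta+\nu)\bigr)
\]
with $A=r_1^z r_1^w$, $B=r_2^z r_2^w$ and $\mu,\nu$ the phase differences. Since $m,n\neq 0$ this trigonometric polynomial has vanishing $\theta$-mean, so its maximum is $\geq 0$; hence $d_B\leq \pi/2$ and $\diam B=\pi/2$. If $d_B([z],[w])=\pi/2$, the function vanishes identically; when $|m|\neq|n|$ linear independence of the cosines forces $A=B=0$, so $\{[z],[w]\}=\{\bp,\bq\}$, while when $|m|=|n|=1$ (the Hopf case, $B\cong\Sph^2(1/2)$) the relation pins $[w]$ down as the antipode of $[z]$. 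Either way $[w]$ is uniquely determined, proving (b).

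For (a), I would first settle the model case $B\cong\Sph^2(1/2)$. Label the points so that $D:=d(p_1,p_2)$ is maximal. The triangle inequality applied at $-p_i$, together with $d(x,-y)=\pi/2-d(x,y)$ on $\Sph^2(1/2)$, gives $d(p_i,p_1)+d(p_i,p_2)\leq \pi-D$ for $i=3,4$; combined with $d(p_3,p_4)\leq D$ this yields
\[
\sum_{1\le i<j\le 4} d(p_i,p_j)\;\leq\; D+(\pi-D)+(\pi-D)+D \;=\; 2\pi.
\]
Equality forces $-p_3$ and $-p_4$ to lie on the minimizing $p_1p_2$-geodesic at distance $D$ apart; since that geodesic has length $D$, they must be its two endpoints, whence $\{p_3,p_4\}=\{-p_1,-p_2\}$, the claimed two-antipodal-pair configuration.

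For arbitrary weights I would compare $B$ to the Hopf model via a warped-product presentation. On the regular part $B\setminus\{\bp,\bq\}$, the $\rho$-invariant coordinates $\alpha=\arccos|z_1|\in(0,\pi/2)$ and $\psi=n\phi_1-m\phi_2\in\R/2\pi\Z$ yield the O'Neill-induced metric
\[
g_{(m,n)}\;=\;d\alpha^2\;+\;\frac{\tfrac14\sin^2(2\alpha)}{m^2\cos^2\alpha+n^2\sin^2\alpha}\,d\psi^2,
\]
which for $(m,n)=(1,1)$ is the standard metric on $\Sph^2(1/2)$. Since $m^2\cos^2\alpha+n^2\sin^2\alpha\geq 1$ on $[0,\pi/2]$, with equality everywhere iff $|m|=|n|=1$, the identity map in $(\alpha,\psi)$-coordinates $\Sph^2(1/2)\to B$ is $1$-Lipschitz; hence $d_B\leq d_{\Sph^2(1/2)}$ pairwise and $\sum d_B\leq 2\pi$ by the model case. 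For equality one needs both $\sum d_{\Sph^2(1/2)}=2\pi$ (forcing the four points to form two antipodal pairs $\{\pm p,\pm q\}$) and $d_B(p,-p)=\pi/2$ for each pair; however, in the non-Hopf case $m^2\cos^2\alpha+n^2\sin^2\alpha>1$ on $(0,\pi/2)$, so any \emph{non-meridian} great half-circle from $p$ to $-p$ has strictly shorter $B$-length, giving $d_B(p,-p)<\pi/2$ whenever $p$ is interior. Since four distinct points cannot confine both antipodal pairs to $\{\bp,\bq\}$, equality forces the Hopf weights. The most delicate step is this final equality analysis, which exploits the one-parameter family of minimizing Hopf geodesics between antipodal points.
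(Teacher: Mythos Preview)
Your proof is correct and follows the same overall strategy as the paper: build a $1$-Lipschitz homeomorphism $\Sph^2(1/2)\to B$, prove the inequality on the Hopf model, and then analyze equality. The paper constructs the comparison map more abstractly (using the residual $\gT^1$-action from the centralizer and comparing orbit lengths), while you compute the warped-product metric explicitly; these are the same map. For the model inequality the paper sums the perimeter bound $\le\pi$ over the four sub-triangles, which is algebraically equivalent to your max-edge argument via $d(p_i,p_1)+d(p_i,p_2)\le\pi-D$. Your treatment of (b) via the mean-zero trigonometric polynomial is a genuinely cleaner direct argument than the paper's, which derives (b) from the $1$-Lipschitz map and the uniqueness of antipodes on $\Sph^2(1/2)$; and your equality discussion (non-meridian half-great-circles acquire strictly shorter $B$-length when $|m|\neq|n|$) fills in details the paper leaves to ``a more detailed analysis''.
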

\begin{proof} We may assume that $\rho$ is faithful.
If the action of $\gS^1$ is given by the Hopf action, then 
$ B:=\Sph^3/\gS^1$ is the $2$-sphere $\Sph^2(1/2)$ of constant curvature 
$4$. Recall that a triangle in
$\Sph^2(1/2)$ has perimeter $\le \pi$ 
and that equality can only occur if 
two of the points on the boundary triangle have distance $\pi/2$.
Using this for all triangles
 $\{q_1,q_2,q_3\}\subset \{p_1,p_2,p_3,p_4\}$, we get 
the claimed inequality. Equality can only 
occur if the four points 
are on a great circle. A more 
detailed analysis shows that equality implies 
 $\{p_1,p_2,p_3,p_4\}=\{\pm p,\pm q\}$.

In general it is easy to construct a distance 
non-increasing homeomorphism 
\[
f\colon \Sph^2(1/2)\rightarrow B.
\]
For the proof notice that $B$ admits 
an isometric action of a circle $\gT^1$, since the centralizer of $\rho(\gS^1)$ 
in $\SO(4)$ 
acts isometrically on $B$, 
the quotient space $B/\gT^1$ is isometric to the interval $[0,\pi/2]$. 
The same holds for the quotient space $\Sph^2(1/2)/\gT^1$. 
It is now easy to see that the orbits of 
the $\gT^1$ action on $\Sph^2(1/2)$ 
are at least as long as the corresponding orbits in $B$.

Finally if the action is not given by the Hopf action, 
then we can not find four different  points 
 $p_1,p_2,p_3,p_4\in B$ with $d(p_1,p_2)=d(p_3,p_4)=\pi/2$. 
Since $f$ is distance non-increasing part b) follows as well.
\end{proof}

\begin{proof}[Proof of 
Theorem~\ref{thm: kleiner}.] 
By Freedman's classification of simply connected topological $4$-manifolds,
 it suffices to show 
that $\chi(M)\le 4$. Since the Eulercharacteristic of $M$ 
equals the Eulercharacteristic of the fixed point 
set of $\gS^1\subset\Iso(M,g)$, it suffices to estimate the latter. 

We now consider the orbit space $A^3:=M^4/\gS^1$ 
as an Alexandrov space. We first want to rule out 
that $\gS^1$ has more than four isolated fixed points. 
Suppose $p_1,\ldots,p_5$ are pairwise different 
isolated fixed points in $M$. 

We can view these points also as points in the orbit 
space $A$. 
Choose a fixed minimal normal geodesic $\gamma_{ij}\colon [0,1]\rightarrow A$
between $p_i$ and $p_j$ for $i\neq j$. 
We may assume $\gamma_{ij}$ and $\gamma_{ji}$ are equal 
up to a change of direction.

We also consider all angles $\alpha_{ijk}$ 
between $\gamma_{ij}$ and $\gamma_{ik}$ for all pairwise different  
$i,j$ and $k$.
A simple counting argument shows that there are precisely 
$30$  angles. We next prove two different estimates 
for the sum of these angles. 

For any three points in $\{p_1,p_2,p_3,p_4,p_5\}$ we get a
triangle. The sum of the angles in the triangle is $\ge \pi$,
 as $X$ is nonnegatively curved 
in the Alexandrov sense.
Therefore the sum of all $30$ angles is $\ge 10\pi$.

On the other hand we can consider for a fixed 
point $p_i$ all $6$ angles based at $p_i$.
The angles are given as the pairwise
 distances of four distinct points 
in the space of directions  
 $\Sigma_{p_i}X$.
Since $\Sigma_{p_i}X$ 
is isometric to the quotient of $\Sph^3$ by a $\gS^1$-action, 
we infer from 
Lemma~\ref{lem: trian} that the sum of these $6$ 
angles is $\le 2\pi$.
This proves that the sum of all $30$ angles is at most 
 $10\pi$.

Hence equality must hold everywhere. 
It follows that the space of directions 
at $p_i$ is given by a sphere of constant curvature 
$4$. There are precisely $10$ angles of size $\pi/2$
and for each triangle corresponding to three points in $\{p_1,\ldots,p_5\}$ 
the sum of the angles 
is $\pi$ and hence precisely one angle in such a triangle equals $\pi/2$.
 We may assume $d(p_1,p_2)=\min_{i\neq j} d(p_i,p_j)$. 
For one point $q\in \{p_3,p_4,p_5\}$ 
the triangle $(p_1,p_2,q)$ has neither an angle $\pi/2$ 
at $p_1$ nor an angle $\pi/2$ at $p_2$.  
Thus there is an angle $\pi/2$ at $q$. 
Since equality holds in Toponogov's comparison theorem 
we see 
\[
d(p_1,q)^2+d(p_2,q)^2=d(p_1,p_2)^2
\]
a contradiction since $d(p_1,p_2)$ was minimal.

Suppose next that the fixed point set $\Fix(\gS^1)$ of $\gS^1$ 
contains at least two $2$-dimensional components. 
These components form totally geodesic submanifolds 
of the Alexandrov space $A$. Since they do not intersect 
it is easy to see that $A$ is isometric to $F\times [0,l]$ 
where $F$ is a fixed point component. 
In particular $\gS^1$ has no fixed points outside the two components. 
Since each component has Eulercharacteristic $\le 2$ the result
follows.

It remains to consider the case that 
 $\gS^1$ has
precisely one $2$-dimensional fixed point component $F$.
We have to show that the  $\gS^1$-action has at most two isolated 
fixed points.  
Notice that $F$ is the boundary of the Alexandrov space 
$A$ and the distance function $h:=d(F,\cdot)\colon A\rightarrow \R$ 
is concave. 
Let $p\in A$ denote one isolated fixed point 
with minimal distance $r$ to the boundary. 
The set $h^{-1}\bigl([r,\infty[\bigr)$ is convex. 
Let $v\in \Sigma_pA$ be the initial direction 
of a minimal geodesic from $p$ to $F$. 
The tangent cone $C_p h^{-1}\bigl([r,\infty[\bigr)$ 
consist of 'vectors' which have an angle $\ge \pi/2$ 
with $v$. From Lemma~\ref{lem: trian} 
we deduce that $C_p h^{-1}\bigl([r,\infty[\bigr)$ 
is at most one dimensional. 
Thus the convex set $h^{-1}\bigl([r,\infty[\bigr)$ is either a point 
or an interval.  
By construction $h^{-1}\bigl([r,\infty[\bigr)$ contains all 
isolated fixed points of $\gS^1$. Since
for each fixed point the space of direction has diameter 
$\pi/2$, we deduce that there are 
at most two isolated fixed points.
\end{proof}

%

Gursky and LeBrun [1999] obtained strong restrictions on 
$4$-dimensional nonnegatively curved 
Einstein manifolds.

One might ask whether any nonnegatively curved compact manifold 
with finite fundamental group also admits nonnegatively curved 
  metrics with positive Ricci curvature. A partial result in direction was proved
recently.

\begin{thm}[B\"ohm and Wilking, 2005] Let $(M,g)$ be a compact nonnegatively curved  manifold with 
finite fundamental group, and let $g_t$ be a solution of the Ricci flow. Then for all small 
$t>0$, $g_t$ has positive Ricci curvature.
\end{thm}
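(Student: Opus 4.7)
The plan is to combine a parabolic maximum principle argument for the Ricci tensor with Hamilton's strong maximum principle for symmetric $2$-tensors, and then to use the de Rham decomposition together with the finiteness of $\pi_1(M)$ to exclude a persistent kernel. Under the Ricci flow,
\[
\partial_t R_{ij}=\Delta R_{ij}+2R_{ikjl}R^{kl}-2R_{ik}R^k_j.
\]
First I would show that $\Ric_{g_t}\ge 0$ for $t\in[0,\eps)$. The key pointwise observation at $t=0$ is that if $\Ric_{g_0}(v,v)=0$, then nonnegativity of $K$ forces $K(v,w)=0$ for every $w$, and a short polarization then gives $R_{g_0}(v,\cdot,\cdot,\cdot)=0$; thus both reaction terms vanish on any initial null direction. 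Applied to the evolution of the lowest eigenvalue of $\Ric$, together with a short-time continuity argument, this rules out the instantaneous creation of negative eigenvalues.

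Next, given $\Ric_{g_t}\ge 0$ on $[0,\eps)$, I would invoke Hamilton's strong maximum principle for the Ricci tensor: for each $t\in(0,\eps)$ either $\Ric_{g_t}>0$ everywhere, or $\mathcal{D}_t:=\Ker\Ric_{g_t}$ is a parallel distribution of some constant rank $k\ge 1$. Suppose, toward a contradiction, that the second alternative holds along a sequence $t_n\downarrow 0$. Since $g_{t_n}\to g_0$ smoothly, the associated parallel transports converge, and a subsequential limit $\mathcal{D}_0\subset\Ker\Ric_{g_0}$ is a $k$-dimensional distribution parallel with respect to $g_0$. The polarization observation above at $t=0$ then forces every $v\in\mathcal{D}_0$ to be a flat direction, so $\mathcal{D}_0$ is a parallel, totally flat distribution. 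Its orthogonal complement is likewise parallel, and the de Rham decomposition theorem splits the universal cover isometrically as $\widetilde M\cong\R^k\times\widetilde N$. But $\pi_1(M)$ finite forces $\widetilde M$ to be compact, which is incompatible with a Euclidean factor of positive dimension. Hence $\Ric_{g_t}>0$ for all small $t>0$.

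The main obstacle is the preservation step: the pointwise reaction calculation is only clean at $t=0$, since for $t>0$ the sectional curvature may a priori drop below zero, so the identity $K(v,\cdot)\equiv 0$ that nullified $R_{ikjl}R^{kl}v^iv^j$ is no longer available. The natural remedy, in the spirit of the pinching family framework used in the proof of Theorem~\ref{thm: boehm wilking}, is to exhibit an ODE-invariant closed convex cone of algebraic curvature operators containing the nonnegative sectional curvature cone and contained in $\{\Ric\ge 0\}$; Hamilton's maximum principle then transfers invariance to the PDE and the flow stays inside $\{\Ric\ge 0\}$ for a short time. Once preservation is in hand, the rigidity cascade through the strong maximum principle and de Rham is straightforward.
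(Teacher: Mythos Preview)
You have correctly isolated the crux of the argument: the preservation step is the whole difficulty, and the pointwise computation at $t=0$ does not propagate because nonnegative sectional curvature is not Ricci flow invariant. However, your proposed remedy---to exhibit a single ODE--invariant closed convex cone sitting between the nonnegative sectional curvature cone and the cone $\{\Ric\ge 0\}$---does not work. In the very paper you are trying to recover, B\"ohm and Wilking prove that in dimensions above $11$ there is \emph{no} Ricci flow invariant curvature condition lying between nonnegative sectional curvature and nonnegative Ricci curvature. So the cone you postulate simply does not exist in general, and the gap you identified cannot be closed by the mechanism you suggest.

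What the paper actually does is different in kind: it applies a \emph{dynamical} version of Hamilton's maximum principle to a one-parameter \emph{family} $C(t)$ of closed convex $\Or(n)$-invariant sets of curvature operators, with $C(0)$ equal to the nonnegative sectional curvature cone and $C(t)\subset\{\Ric\ge 0\}$ for all $t$, such that the Ricci flow cannot leave $C(t)$ on an interval $[0,\eps]$ whose length depends on an upper curvature bound. No single member of this family is ODE--invariant; invariance is a property of the moving family as a whole. Once this time-dependent barrier yields $\Ric_{g_t}\ge 0$ on $[0,\eps]$, a strong maximum principle finishes the argument. Your de~Rham/splitting endgame is in the right spirit, but note also that your invocation of ``Hamilton's strong maximum principle for the Ricci tensor'' is not automatic: the reaction term $2R_{ikjl}R^{kl}$ in the evolution of $\Ric$ depends on the full Riemann tensor, so the null-eigenvector condition $Q(v,v)\ge 0$ at a zero of $\Ric$ is not verifiable from $\Ric\ge 0$ alone for $t>0$. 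The extra control furnished by the family $C(t)$ is what makes the strong maximum principle step ``easy'' in the paper; without it your rigidity cascade is not yet justified.
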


The proof applies a dynamical version of Hamilton's maximum 
principle to a family of curvature conditions 
 lying in between nonnegative sectional curvature 
and nonnegative Ricci curvature. It then follows that $g_t$ 
has nonnegative Ricci curvature for $t\in [0,\eps]$
 with $\eps$ depending on an upper curvature bound. 
Then the theorem follows easily from a strong maximum principle. 
In the same paper it was also shown that there is no 
Ricci flow invariant curvature 
condition in between nonnegative sectional curvature 
and nonnegative Ricci curvature in dimensions above $11$.
 This in turn generalized previous results saying 
that neither nonnegative Ricci curvature nor nonnegative 
sectional curvature are invariant under the Ricci flow
in dimensions above $3$, see [Ni, 2004].

In particular, any compact nonnegatively curved manifold 
with finite fundamental group 
satisfies all obstructions coming from positive Ricci curvature.
In the simply connected case the only general known obstruction 
for positive Ricci curvature is that the manifold 
admits a metric with positive scalar curvature. By the work of 
 Gromov and Lawson and Stolz the latter
statement is equivalent to saying: Either $M$ 
is not spin or $M$ is a spin manifold with a vanishing
$\alpha$-invariant.   For more details and references 
we refer the reader 
to the surveys of Jonathan Rosenberg and Guofang Wei
published in this volume.

{\bf Grove--Ziller examples.} Recently 
 Grove and  Ziller generalized a gluing technique 
which
by the work of Cheeger [1973] was previously only known to work in the special case of
connected sums of two rank one symmetric spaces. 
Since they are discussed in more detail 
in the survey of Wolfgang Ziller we will be  brief.

\begin{thm}[Grove and Ziller, 2000]\label{thm: gz} \label{thm: grove ziller}
Let $\G$ be a compact Lie group, and let $\G/\!/\gH$ be a compact biquotient. Suppose 
there are two subgroups $\gK_{\pm}\subset \G\times \G$ such that 
$\gK_{\pm}/\gH\cong \Sph^1$ and the action of $\gK_{\pm}$ on  $\G$ is free.
Then the manifold obtained by gluing the two disc bundles associated to the two
sphere bundles $\G/\!/\gH\rightarrow \G/\!/\gK_\pm$ along their common boundary
$\G/\!/\gH$ has a metric of nonnegative sectional curvature.
\end{thm}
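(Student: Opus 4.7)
The plan is to realize $M$ as a cohomogeneity one $(\G\times\G)$-manifold with group diagram $\gH\subset\{\gK_-,\gK_+\}\subset\G\times\G$ (principal orbit $\G/\!/\gH$, singular orbits $\G/\!/\gK_\pm$), and to build a $(\G\times\G)$-invariant metric of nonnegative sectional curvature by assembling two disc-bundle metrics constructed via a Cheeger surface-of-revolution trick on $D^2$ together with O'Neill's formula.

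First I would make the bundle structure explicit. Each disc bundle is the associated bundle $D_\pm=\G\times_{\gK_\pm}D^2$, where $\gK_\pm$ acts on $\G$ by the biquotient action (free by hypothesis) and on $D^2\subset\C$ via the projection $\gK_\pm\twoheadrightarrow\gK_\pm/\gH\cong\Sph^1$ followed by the standard rotation. Since the $\Sph^1$-action on the boundary of $D^2$ is free and transitive, the boundary of $D_\pm$ is
\[
\G\times_{\gK_\pm}(\gK_\pm/\gH)\;\cong\;\G/\!/\gH,
\]
so $M=D_-\cup_{\G/\!/\gH}D_+$ is exactly the manifold in the statement, and the $(\G\times\G)$-action on the left factor extends to a cohomogeneity one action on $M$.

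Second, fix a bi-invariant metric $Q$ on $\G$ and, for each sign, a rotationally symmetric Cheeger metric $g_\pm=dt^2+f_\pm(t)^2\,d\theta^2$ on $D^2$ with $f_\pm(0)=0$, $f'_\pm(0)=1$, $f''_\pm\le 0$ smoothly, and $f_\pm\equiv c_\pm$ constant on a collar $[1-\eps,1]$. This profile yields a nonnegatively curved surface of revolution that is a flat product cylinder near the boundary. On $\G\times D^2$ take the product metric $Q+g_\pm$. Since $Q$ is bi-invariant and the rotation action on $D^2$ is isometric, the diagonal $\gK_\pm$-action is free and isometric, so by O'Neill the quotient metric on $D_\pm=(\G\times D^2)/\gK_\pm$ has nonnegative sectional curvature, and on the flat collar it is a Riemannian product $(\G/\!/\gH,h_\pm)\times[0,\eps]$ for some invariant metric $h_\pm$.

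The main obstacle is to arrange $h_+=h_-$ so that the two halves glue to a $C^\infty$ metric on $M$ that is still nonnegatively curved. A direct horizontal-distribution computation on the boundary collar $\G\times\Sph^1_{c_\pm}$ shows that $h_\pm$ is the $\gH$-submersion quotient of $(\G,Q)$, modified by a Cheeger-type stretch along the image Killing field of the $\gK_\pm/\gH$-action whose magnitude is controlled by $c_\pm$. Using the bi-invariance of $Q$ one verifies that both stretches give the \emph{same} invariant metric on $\G/\!/\gH$ once the parameters $c_\pm$ are chosen to match the common fiber length of the two $\Sph^1$-actions; this is precisely the compatibility that makes Cheeger's original connected-sum construction work, and the content of the Grove--Ziller generalization is that it continues to hold in the biquotient setting as soon as one has two such $\gK_\pm$-subgroups. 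With $h_+=h_-$ the gluing is an isometry of Riemannian product collars, nonnegative sectional curvature is preserved in the interior of each half by O'Neill and on the collars trivially, and the resulting metric on $M$ has nonnegative sectional curvature.
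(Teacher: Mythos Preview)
Your matching step contains a genuine gap. The boundary metric on $\G/\!/\gH$ induced from $\bigl((\G,Q)\times\Sph^1_{c_\pm}\bigr)/\gK_\pm$ is, up to normalization,
\[
h_\pm(X,X)\;=\;\bar Q(X,X)-\frac{\bar Q(X,\bar\xi_\pm)^2}{\bar Q(\bar\xi_\pm,\bar\xi_\pm)+c_\pm^2},
\]
where $\bar Q$ is the normal biquotient metric on $\G/\!/\gH$ and $\bar\xi_\pm$ is the Killing field of the residual $\gK_\pm/\gH$-action. This is a Cheeger shrinking \emph{in the direction} $\bar\xi_\pm$; it agrees with $\bar Q$ on $\bar\xi_\pm^\perp$. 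When $\bar\xi_+$ and $\bar\xi_-$ are linearly independent, no choice of the scalars $c_\pm$ can make $h_+=h_-$: one metric is strictly smaller than $\bar Q$ only along $\bar\xi_+$, the other only along $\bar\xi_-$. Arranging the two collar metrics to coincide is precisely the nontrivial core of the original Grove--Ziller argument, and ``choosing $c_\pm$ to match the common fiber length'' does not accomplish it. (A smaller slip: $\G\times\G$ does not act on the biquotient $M$, so $M$ is not itself a cohomogeneity one $(\G\times\G)$-manifold as you assert in the first paragraph.)

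The paper takes a different route that sidesteps the matching problem entirely. It forms the genuine cohomogeneity one $(\G\times\G)$-manifold $N$ with group diagram $\gH\subset\{\gK_-,\gK_+\}\subset\G\times\G$ --- so the principal orbit is $(\G\times\G)/\gH$, not $\G/\!/\gH$ --- and invokes the original Grove--Ziller theorem as a black box to put an invariant nonnegatively curved metric on $N$. The hypothesis that the biquotient actions of $\gK_\pm$ on $\G$ are free translates into freeness of the diagonal $\Delta\G\subset\G\times\G$ on $N$, and one checks that $N/\Delta\G$ is exactly the glued manifold $M$ of the statement. O'Neill's formula then gives nonnegative curvature on $M$. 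Thus the paper reduces the biquotient statement to the homogeneous one rather than redoing the disc-bundle construction; your approach would instead require reproving Grove--Ziller's collar-matching from scratch, which you have not done.
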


The stated theorem is slightly more general than the original version of Grove and Ziller,
 who considered cohomogeneity one manifolds or equivalently the case where 
all  groups $\gH, \gK_{\pm}$ act from the right on $\G$
and hence the corresponding quotients are homogeneous. Of course it would be
 interesting to know whether the generalization gives rise to any interesting
 new examples. One can actually reduce the more general statement to 
the one of Grove and Ziller as follows 

\begin{proof} We consider the manifold $M$ 
which admits a cohomogeneity one action 
 of $\G\times \G$ with principal isotropy group 
$\gH$ and singular isotropy groups $\gK_{\pm}\subset \G\times \G$. 
By Grove and Ziller this manifold has an invariant metric 
of  nonnegative sectional curvature, see the survey of 
Wolfgang Ziller for details. By assumption the diagonal 
$\Delta \G\subset \G\times \G$ acts freely on $M$. 
Clearly the manifold in the theorem is the quotient $M/\Delta \G$. 
Thus the result follows from the O'Neill formulas.
\end{proof}

\begin{thm}[Grove and Ziller] 
Any principal $\SO(n)$-bundle over $\Sph^4$ admits a cohomogeneity one action of
$\gS^3\times \SO(n)$ with singular orbits of codimension $2$.
\end{thm}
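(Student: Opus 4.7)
My plan is to apply Theorem~\ref{thm: gz} with $\G=\gS^3\times\SO(n)$. Principal $\SO(n)$-bundles over $\Sph^4$ are classified by $\pi_3(\SO(n))$ via the clutching construction, and a standard Lie-theoretic fact is that every element of $\pi_3$ of a compact Lie group is represented by a Lie group homomorphism from $\SU(2)=\gS^3$. So the bundle at hand corresponds to a homomorphism $\tau\colon\gS^3\to\SO(n)$.

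Fix the maximal torus $\gS^1\subset\gS^3$ and consider the subgroups
\[
\gK_-=\{(q,1):q\in\gS^1\},\qquad \gK_+=\{(q,\tau(q)):q\in\gS^1\},\qquad \gH=\gK_+\cap\gK_-.
\]
Both $\gK_\pm$ are 1-dimensional circles with $\gH=\ker(\tau|_{\gS^1})$ finite cyclic and $\gK_\pm/\gH\cong\gS^1$; since the projections $\gK_\pm\to\gS^3$ are injective, the $\gK_\pm$-actions on $\G$ are free. The hypotheses of Theorem~\ref{thm: gz} are thus satisfied, and its construction produces a closed cohomogeneity one manifold $M$ of dimension $\dim\G+1=4+\binom{n}{2}$ with principal orbit $\G/\gH$ of codimension 1 and singular orbits $\G/\gK_\pm$ of codimension 2. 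The codimension-2 condition on singular orbits is thus built in by construction.

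The remaining step is the identification of $M$ with the principal $\SO(n)$-bundle $P$ classified by $\tau$. Both manifolds decompose into two disc-bundle pieces: $M=(\G\times_{\gK_-}D^2)\cup_{\G/\gH}(\G\times_{\gK_+}D^2)$, while $P=(D^4\times\SO(n))\cup_\tau(D^4\times\SO(n))$ via the standard clutching over the equator $\gS^3=\partial D^4$. Via the Hopf reduction $\gS^3\to\gS^2=\gS^3/\gS^1$ the pieces $\G\times_{\gK_\pm}D^2$ correspond to the trivializations of $P$ over the two hemispheres of $\Sph^4$, and the twist by $\tau$ in $\gK_+$ reproduces the clutching twist.

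The main obstacle is carrying out this final identification in detail. To realize every principal bundle, one likely has to allow $\gK_-$ to be the graph of an auxiliary character $\sigma\colon\gS^1\to\SO(n)$ (rather than just the trivial homomorphism) and let the pair $(\sigma,\tau)$ range over a parametrization so that the resulting manifolds $M$ exhaust all principal $\SO(n)$-bundles over $\Sph^4$. Verifying the match between the two smooth structures can be done either by a direct construction of the diffeomorphism or by matching characteristic-class invariants computed from the two sides.
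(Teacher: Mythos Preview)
Your main line of argument has a genuine gap: the ``standard Lie-theoretic fact'' you invoke is false. Not every class in $\pi_3(\SO(n))$ is represented by a Lie group homomorphism $\gS^3\to\SO(n)$. For $n=4$ one has $\pi_3(\SO(4))\cong\Z\oplus\Z$, while any homomorphism $\gS^3\to\SO(4)$ lifts to $\gS^3\to\gS^3\times\gS^3$, hence is a pair of endomorphisms of $\gS^3$; since $\gS^3$ is simple, each factor is either trivial or an automorphism, so only finitely many classes in $\Z\oplus\Z$ arise this way. Thus the construction with $\gK_+=\{(q,\tau(q))\}$ and $\gK_-=\{(q,1)\}$ cannot produce all principal $\SO(n)$-bundles over $\Sph^4$. (As a minor side point, invoking Theorem~\ref{thm: gz} is off target: that theorem concerns nonnegative curvature, whereas here you only need the standard construction of a cohomogeneity one manifold from a group diagram $\gH\subset\gK_\pm\subset\G$.)

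You essentially diagnose the problem yourself in your final paragraph: one must allow \emph{both} $\gK_\pm$ to be graphs of characters $\gS^1\to\SO(n)$, not of homomorphisms $\gS^3\to\SO(n)$. This is precisely the paper's approach. Grove and Ziller fix the cohomogeneity one $\gS^3$-action on $\Sph^4$ with codimension-2 singular orbits, enumerate all cohomogeneity one $\gS^3\times\SO(n)$-manifolds lying over it as principal $\SO(n)$-bundles (these are parametrized by the pairs of characters defining $\gK_\pm$), compute their characteristic classes, and verify that every bundle is realized. The substance of the proof is exactly this characteristic-class computation, which you mention as an option but do not carry out; the paper notes that the details are involved. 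So your fallback suggestion is on the right track and aligns with the paper, but what you have written is a framework plus a false shortcut, not a proof.
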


The proof uses the classification of bundles over $\Sph^4$ in terms of characteristic
classes. Grove and Ziller endow $\Sph^4$ with the unique cohomogeneity one action of $\gS^3$  with
singular orbits of codimension $2$. Then they compute for all
$\gS^3\times\SO(n)$-cohomogeneity one manifolds which are $\SO(n)$-principal bundles
over the given cohomogeneity one manifold $\Sph^4$ all characteristic classes. By
comparing the set of invariants, it follows that one gets all bundles this way. 
The details are involved.

By taking quotients of such principal bundles it follows that any sphere bundle 
over $\Sph^4$ admits a metric of nonnegative sectional curvature. This is particular striking
since $10$ of the $14$ exotic spheres in dimension $7$ can be realized as such bundles.

Grove and Ziller conjectured in their paper that any cohomogeneity one manifold 
admits an invariant nonnegatively curved metric. A partial answer was given by
Schwachh\"ofer and Tuschmann [2004] who showed that these manifolds admit metrics of
almost nonnegative sectional curvature. However, counterexamples to the Grove-Ziller
conjecture were recently found by Grove, Verdiani, Wilking and Ziller [2006]. 
The counterexamples contain all higher dimensional Kervaire spheres and therefore all
exotic spheres of cohomogeneity one. Additional counterexamples are given but to this
day it remains an open question how big the class of nonnegatively curved  cohomogeneity
one manifolds is.

\section{Open nonnegatively curved manifolds.}\label{sec: open nonneg}

Noncompact nonnegatively curved spaces often occur as blow up limits 
of sequences of manifolds  converging with 
lower curvature bound  $-1$ to a limit. 
Also recall a result of Hamilton and Ivey 
saying that for any singularity of the Ricci flow in dimension $3$ 
the corresponding blow up limit has nonnegative sectional curvature. 
This in turn was one key feature which allowed Hamilton and Perelman 
to classify the possible singularities of the Ricci flow in dimension $3$. 

By a result of Gromov [1986] any noncompact manifold admits a positively curved 
metric. However Gromov's metrics are not complete and we 
assume throughout the paper that all metrics are complete.

The structure of open manifolds of nonnegative (positive) sectional curvature is better
understood than the compact case. By a theorem of Gromoll and Meyer [1969] 
a positively curved
open manifold is diffeomorphic to the Euclidean space. For a nonnegatively curved manifold
there is the soul theorem

\begin{thm}[Cheeger and Gromoll, 1971]
 For an open nonnegatively curved manifold $M$ there is
a totally geodesic submanifold $\Sigma$ called the soul such that $M$ is diffeomorphic
to the normal bundle of $\Sigma$.
\end{thm}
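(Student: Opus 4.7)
The plan is to build the soul $\Sigma$ as the terminal member of a decreasing chain of totally convex compact subsets of $M$, and then show that $M$ is diffeomorphic to the normal bundle $\nu(\Sigma)$ by producing a gradient-like vector field whose flow deformation-retracts $M$ onto $\Sigma$ in a controlled way.

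First I would produce a compact totally convex exhaustion set. Fix a basepoint $p\in M$. For every ray $\gamma\colon[0,\infty)\to M$ starting at $p$, the Busemann function $b_\gamma(x)=\lim_{t\to\infty}\bigl(t-d(x,\gamma(t))\bigr)$ is well defined, and a direct Toponogov comparison argument (comparing with the flat model) shows that $b_\gamma$ is convex along geodesics. Hence each sublevel set $\{b_\gamma\le c\}$ is totally convex, and therefore so is
\[
C_0:=\bigcap_{\gamma\text{ ray from }p}\{b_\gamma\le 0\}.
\]
One checks that $C_0$ is nonempty (it contains $p$) and, using that $M$ is noncompact so that rays exist in many directions, that $C_0$ is bounded, hence compact. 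By a small smoothing one may assume $C_0$ is a topological manifold with boundary whose interior is a smooth submanifold of $M$.

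Next I would iterate to decrease dimension. Given a compact totally convex $C_i$ with nonempty boundary, the distance function $h_i(x):=d(x,\partial C_i)$ restricted to $C_i$ is concave (again by Toponogov, applied inside $C_i$, using that geodesics of $M$ starting in the interior of $C_i$ stay in $C_i$ for a while). The maximum set $C_{i+1}:=\{h_i=\max h_i\}$ is therefore totally convex in $M$, and a standard dimension-drop argument shows $\dim C_{i+1}<\dim C_i$. After finitely many steps the process terminates in a compact totally convex set $\Sigma=C_k$ with empty boundary. Because $\Sigma$ has empty boundary and is totally convex, it is a closed totally geodesic submanifold of $M$; this is the soul.

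Finally, to identify $M$ with $\nu(\Sigma)$, I would construct a distance-like function $f\colon M\to[0,\infty)$ that measures "how far $x$ has been pushed out of the chain $C_0\supset\cdots\supset C_k=\Sigma$", obtained by patching the Busemann data defining $C_0$ with the concave functions $h_i$ used at each stage. Using Toponogov once more, each of these pieces has no critical points (in the Grove--Shiohama sense) off the next convex set, so a partition-of-unity argument in the style of the diameter sphere theorem proof produces a globally defined gradient-like vector field $X$ on $M\setminus\Sigma$ whose flow moves points monotonically away from $\Sigma$. Combining the flow of $X$ outside a tubular neighborhood with the normal exponential map $\exp^\perp\colon\nu(\Sigma)\to M$ inside the tube yields the desired diffeomorphism $\nu(\Sigma)\cong M$. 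The main obstacle is the iteration step: one has to verify carefully that $h_i$ really is concave across the possibly non-smooth boundary $\partial C_i$ and that the maximum set is again totally convex in $M$ (not merely in $C_i$), since this is what makes the inductive dimension drop work and ultimately what forces $\Sigma$ to be totally geodesic in $M$ rather than just intrinsically flat.
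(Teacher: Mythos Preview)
Your proposal is correct and follows essentially the same route as the paper's sketch: build an initial compact totally convex set via a Busemann-type function, iterate using the concave distance-to-boundary function until you reach a boundaryless totally geodesic soul $\Sigma$, and then invoke Grove--Shiohama critical point theory to produce the diffeomorphism $\nu(\Sigma)\cong M$. One minor simplification worth noting: rather than patching the various functions $b_\gamma$ and $h_i$ from the construction, the paper observes directly that the single distance function $d(\Sigma,\cdot)$ has no critical points on $M\setminus\Sigma$, which already yields the required gradient-like vector field.
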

\begin{proof}[Sketch of the proof]
The basic observation in the proof is that for each point $p\in M$ the function 
$f_0(q):=\lim_{r\to \infty}d(\partial B_r(p),q)-r$ is concave, proper and bounded above.
Hence the maximal level of $f_0$ is a convex closed subset $C_1$ of $M$. Cheeger and
Gromoll showed that $C_1$ is a totally geodesic compact submanifold with a possibly non-empty
and non-smooth intrinsic boundary $\partial C_1$. One can then show that if
$\partial C_1\neq \emptyset$, then the function $f_1(q)=d(\partial C_1,q)$ is concave on
$C_1$.  As before the maximal level set $C_2$ of $f_1$ is a convex subset of $M$. Since 
$\dim(C_2)<\dim(C_1)$ one can iterate the process until one arrives at a convex level set
$C_k$ without intrinsic boundary. Then $\Sigma:=C_k$ is a soul of $M$.  One can
show that the distance function $r_\Sigma:=d(\Sigma,\cdot)$ has
no critical points on $M\setminus \Sigma$ in the sense of Grove and Shiohama, 
for a definition see section~\ref{sec: sphere}. 
Thus there is a gradient like vectorfield $X$ on $M\setminus \Sigma$, with $\|X\|\le 1$.
Similarly to the proof of the diameter sphere theorem one can now construct 
a diffeomorphism $\psi\colon \nu(\Sigma)\rightarrow M$. 
\end{proof}

We emphasize that the diffeomorphism $\nu(\Sigma)\rightarrow M$ 
is in general not given by the exponential map. On the 
other hand it was shown by Guijarro [1998], that there is always 
at least one complete nonnegatively curved metric on $M$
such that this is the case.

From the soul construction it is clear that there is 
a Hausdorff continuous family $(C(s))_{s\in [0,\infty)}$ of convex compact subsets 
of $M$
such that $C(0)=\Sigma$, $C(s_1)\subset C(s_2)$ for $s_1<s_2$ and 
$\bigcup_{s\ge 0} C(s)=M$. In fact from the above sketch 
this family can be obtained by collecting all nonempty sublevels $f_i^{-1}([c,\infty[)$
of the functions $f_0,\ldots,f_{k-1}$ in one family.
Given such a family,
Sharafutdinov [1979] showed, independent of curvature assumptions, 
that there is a distance non-increasing
 retraction $P\colon M\rightarrow \Sigma$. 

\begin{thm}[Perelman, 1994]\label{thm: perelman}\label{thm: soul con}
 Let $\Sigma$ be a soul of $M$, $\nu(\Sigma)$ its normal bundle 
 and $P\colon M\rightarrow \Sigma$ a Sharafutdinov retraction. Then 
\begin{enumerate}
\item[a)] $P\circ \exp_{\nu(\Sigma)}=\pi$, where $\pi\colon\nu(\Sigma)\rightarrow \Sigma$ 
denotes the projection.
\item[b)]  Each two vectors $u\in\nu_p(\Sigma)$ and $v\in T_p\Sigma$
 are tangent to a totally geodesic immersed flat $\R^2$. 
\item[c)] $P$ is a Riemannian submersion of class $C^1$.
\end{enumerate}
\end{thm}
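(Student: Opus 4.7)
The plan is to prove (a), (b), (c) in order, leveraging two tools: the nested family $(C(s))$ of convex sets from the soul construction, and the fact -- which I would first establish as a separate lemma -- that the Sharafutdinov retraction $P$ is 1-Lipschitz. This Lipschitz property I would prove by induction along the filtration $\Sigma=C_k\subset C_{k-1}\subset\cdots\subset C_1\subset M$: at each stage the relevant sub-retraction reduces essentially to nearest-point projection onto a totally convex subset of a nonnegatively curved manifold, which is distance non-increasing by Toponogov's hinge inequality.

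For (a), fix $v\in\nu_p\Sigma$ and let $\gamma(t)=\exp_p(tv)$. Since $\Sigma$ is the maximum set of the final concave function in the soul construction, $v$ lies in the normal cone to every convex level set $C(s)$ that $\gamma$ crosses, which forces $\gamma$ to be an integral curve of the gradient-like flow defining $P$. Hence $P(\gamma(t))=p$ for all $t\ge 0$.

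For (b) -- the flat strip lemma -- fix $u\in\nu_p\Sigma$ and $v\in T_p\Sigma$. Let $\sigma$ be the geodesic in the totally geodesic submanifold $\Sigma$ with $\sigma(0)=p$, $\sigma'(0)=v$; parallel-translate $u$ along $\sigma$ to $U(s)\in\nu_{\sigma(s)}\Sigma$; and define $g(s,t):=\exp_{\sigma(s)}(tU(s))$. For each $s$ the $t$-curve is a normal geodesic from $\Sigma$, so by (a) $P(g(s,t))=\sigma(s)$. The Jacobi field $K_s(t):=\partial_s g$ along this geodesic satisfies $K_s(0)=\sigma'(s)$ and $K_s'(0)=\nabla_{\sigma'}U=0$, so Rauch's theorem in nonnegative curvature gives $\|K_s(t)\|\le\|v\|$. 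For the reverse inequality, fix $t$ and apply the 1-Lipschitz property of $P$ to the curve $s\mapsto g(s,t)$, which $P$ projects onto $\sigma|_{[0,s_1]}$ of length $\|v\|s_1$:
\[
\|v\|s_1 \;\le\; \int_0^{s_1}\|K_s(t)\|\,ds \;\le\; \|v\|s_1.
\]
Equality forces $\|K_s(t)\|\equiv\|v\|$, and the rigidity case of Rauch gives $R(K_s,\partial_t g)\partial_t g\equiv 0$ with $K_s$ parallel along the $t$-geodesic. A standard second variation argument then identifies the image of $g$ as a flat totally geodesic immersed $\R^2$ tangent to $u$ and $v$ at $p$.

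For (c), parts (a) and (b) together yield the Riemannian submersion structure: at $q=\exp_p(tu)$, the $(n-k)$-dimensional subspace $d(\exp_p)_{tu}(\nu_p\Sigma)\subset T_qM$ is annihilated by $dP_q$ (applying (a) to every direction in $\nu_p\Sigma$), while the flat strips from (b) supply, for each $v\in T_p\Sigma$, a parallel translate $V(t)\in T_qM$ mapped isometrically to $v$ by $dP_q$. These two subspaces together span $T_qM$, so $dP_q$ is the differential of a Riemannian submersion. $C^1$ regularity follows by fitting the flat strips into a continuous horizontal distribution via the normal exponential map of $\Sigma$ and the inverse function theorem. The principal obstacle, I expect, lies in the 1-Lipschitz property of $P$: although intuitively natural, its rigorous inductive proof along the soul construction is genuinely delicate and constitutes the deepest input; once granted, the Rauch equality argument in (b) and the assembly in (c) are technical but routine.
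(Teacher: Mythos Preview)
The paper does not actually prove this theorem; it only remarks that Perelman's argument ``is very short and just uses Rauch's comparison theorem,'' and it records Sharafutdinov's 1979 result (quoted just before the theorem) that a distance non-increasing retraction $P\colon M\to\Sigma$ exists. So there is nothing detailed to compare against, but two points about your outline deserve comment.

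First, the $1$-Lipschitz property of $P$ is not the deep step: it is Sharafutdinov's theorem, proved fifteen years earlier and taken as a black box by Perelman. You need not reprove it, and it is not where the difficulty lies.

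Second, and more seriously, your argument for (a) is circular. You assert that for $v\in\nu_p\Sigma$ the geodesic $\gamma(t)=\exp_p(tv)$ meets each convex level set $C(s)$ perpendicularly and is therefore an integral curve of the Sharafutdinov flow. But knowing $\gamma'(0)\perp\Sigma$ at $p$ tells you nothing about $\gamma'(t_s)$ at the crossing point $\gamma(t_s)\in\partial C(s)$; indeed $\Sigma$ is the \emph{maximum} set of the last concave function $f_{k-1}$, so at $p$ every direction has nonpositive one-sided derivative and no normal direction is distinguished as a gradient line. That $\gamma$ is a gradient curve is essentially the content of (a) and (b), not a premise. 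Since your lower bound on the Jacobi field in (b) uses $P(g(s,t))=\sigma(s)$, which is exactly (a), the gap propagates and your Rauch equality argument is not yet closed.

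This is precisely the point Perelman had to overcome: one must obtain the lower bound on $\|K_s(t)\|$ without presupposing (a). The upper bound from Rauch and the $1$-Lipschitz property of $P$ are the right tools, but they must be combined so as to yield (a) and the flat strip simultaneously rather than deducing one from the other. Once that circularity is broken, your rigidity argument in (b) and the assembly in (c) are along the right lines.
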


The theorem also confirmed the soul conjecture 
of Cheeger and Gromoll: A nonnegatively curved open manifold 
with positive sectional curvature at one point is 
diffeomorphic to $\R^n$. Although this conjecture was open for more 
than two decades, the proof of the above theorem is very short 
and just uses Rauch's comparison theorem. 

Guijarro [2000] showed that $P$ is of class $C^2$ and it was 
shown in [Wilking, 2005] that $P$ is of class $C^\infty$. 
The latter result is a consequence of another structure 
theorem on open nonnegatively curved manifolds 
whose explanation requires a bit of preparation:   
One defines a dual foliation $\folF^\#$ to 
the foliation $\folF$ given by the fiber decomposition 
$P\colon M\rightarrow \Sigma$ as follows. 
For a point $p\in M$ we define the dual leaf $\folL^\#(p)$
as the set of all points which can be connected with 
$p$ by a piecewise horizontal curve. 
We recall that a curve is called horizontal with respect 
to $P$, if it is everywhere perpendicular to the fibers of $P$. 

Because of Theorem~\ref{thm: perelman} each dual leaf 
can also be obtained as follows. Consider a vector 
$v$ in the normal bundle $\nu(\Sigma)$ of the soul. 
Let $S(v)$ denote set of all vectors in $\nu(\Sigma)$ 
 which are parallel to $v$ along some curve in $\Sigma$. 
Then $\exp(S(v))=\folL^\#(\exp(v))$. 
The structure of the  dual foliation is thus closely linked 
to the normal holonomy group of the soul.

If the normal holonomy group is transitive on the sphere, 
then the dual leaves are just given by distance spheres 
to the soul. If the holonomy group is trivial, 
then by a result of Strake [1988] and Yim [1990] $M^n$ splits
isometrically as $\Sigma^k\times (\R^{n-k},g)$ 
and the dual leaves are just given by isometric copies 
of $\Sigma$. In general the holonomy group 
is neither transitive nor trivial. In fact, by an unpublished result of the author,
 any connected subgroup of $\SO(n-k)$ can occur as the normal holonomy 
group of a simply connected soul.  

\begin{thm}[Wilking, 2005]\label{thm: dual fol} Let $M,\Sigma, P,\folF^\#$ be as above. 
\begin{enumerate}
\item[a)] Then $\folF^\#$ is a singular Riemannian foliation, i.e., 
geodesics emanating perpendicularly to dual leaves stay perpendicularly to 
dual leaves. 
\item[b)] If $u\in T_pM$ is 
horizontal with respect to $P$ and $v\in T_pM$ is perpendicular to the dual 
leaf $\folL^\#(p)$, then $u$ and $v$ are tangent to a 
totally geodesic immersed flat $\R^2$.
\end{enumerate}
\end{thm}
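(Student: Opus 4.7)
The plan is to bootstrap both statements from Perelman's flat strip result (Theorem~\ref{thm: perelman}(b)) by propagating the totally geodesic flats from soul points to arbitrary points of $M$ via the Riemannian submersion structure of $P$. The crucial technical statement I would isolate first is the following \emph{Key Lemma}: parallel transport along any horizontal curve preserves the splitting $TM = H \oplus V$, where $H = \Ker(dP)^\perp$ and $V = \Ker(dP)$ are the horizontal and vertical distributions.

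For the Key Lemma, I would start with a horizontal geodesic $c$ emanating from a soul point $p \in \Sigma$ in a direction $\dot c(0) \in T_p\Sigma$. For every $v_0 \in \nu_p\Sigma$, Theorem~\ref{thm: perelman}(b) produces a totally geodesic flat $\R^2$ tangent to $\dot c(0)$ and $v_0$; the ``vertical edge'' of this flat realizes the parallel transport of $v_0$ along $c$ and therefore keeps it vertical. Using the identity $P \circ \exp_{\nu\Sigma} = \pi$, one then extends the conclusion to a general horizontal curve $\eta$ starting at $q = \exp_{P(q)} w$ by decomposing the $P$-projection $P \circ \eta$ as a limit of broken geodesics in $\Sigma$ and propagating the splitting radially in each fiber, once again via Perelman's flat.

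Granted the Key Lemma, part (a) follows quickly. The tangent space $T_q\folL^\#(q)$ is the smallest subspace of $T_qM$ that contains $H_q$ and is invariant under parallel transports along horizontal loops based at $q$; by the Key Lemma it is also invariant under parallel transport along arbitrary horizontal curves. A first-variation/second-fundamental-form computation then shows that the second fundamental form of the dual leaf vanishes in every direction $v \perp \folL^\#(p)$, so the geodesic $\gamma(t) = \exp_p(tv)$ stays perpendicular to dual leaves. For part (b), take $u$ horizontal and $v \perp \folL^\#(p)$. Since $H_p \subset T_p\folL^\#(p)$, the vector $v$ is automatically vertical. Part~(a) combined with the Key Lemma forces the parallel transport $u(t)$ of $u$ along $\gamma(t) = \exp_p(tv)$ to remain horizontal, so the surface $(s,t) \mapsto \exp_{\gamma(t)}(s u(t))$ is ruled by horizontal geodesics and is totally geodesic; flatness then follows either by sliding the surface along a horizontal arc inside $\folL^\#(\gamma(t))$ until it reaches the fiber of a soul point and invoking Perelman's original flat strip, or by a direct Jacobi field computation showing that $u(t)$ is parallel for the induced surface connection.

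The main obstacle I anticipate is the Key Lemma, specifically the step of extending the flat-strip picture from geodesics through the soul to arbitrary horizontal curves. The horizontal distribution is generally non-involutive, so piecewise-horizontal approximation loses control unless one carefully bookkeeps the normal holonomy of $\Sigma$; controlling how this holonomy interacts with the radial fiber direction is the technical heart of the argument, and it is also what forces the smoothness improvement of $P$ from $C^1$ to $C^\infty$ mentioned in the excerpt.
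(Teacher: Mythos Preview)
Your Key Lemma is false as stated. Preservation of the splitting $H \oplus V$ under parallel transport along horizontal curves is equivalent to the vanishing of the O'Neill $A$-tensor: for horizontal $X$ and vertical $W$, the horizontal component of $\nabla_X W$ is precisely $A_X W$, and asking this to vanish for all such $X,W$ is exactly the integrability of $H$. You yourself note that $H$ is generally non-involutive, so the Key Lemma contradicts that observation. Your argument at soul points is fine---there $\Sigma$ is totally geodesic, so ambient parallel transport of normal vectors agrees with normal-bundle parallel transport and stays vertical---but it cannot be pushed away from $\Sigma$: Perelman's flats do not force $A$ to vanish along horizontal geodesics at positive height, and your ``radial propagation in each fiber'' step has no mechanism to control the $A$-tensor there. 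Since everything downstream (your description of $T_q\folL^\#(q)$ via parallel transport, the second-fundamental-form argument for (a), the sliding argument for (b)) rests on the Key Lemma, the proposal does not go through.

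The paper's approach is entirely different and does not attempt to bootstrap Perelman's flats. The main tool is a \emph{transversal Jacobi field estimate} (Theorem~\ref{thm: jacobi}): given an $(n-1)$-dimensional family $\bbJ$ of normal Jacobi fields with self-adjoint Riccati operator along a geodesic and a subfamily $\bbV \subset \bbJ$, the components $Y = J^\perp$ orthogonal to $\bbV$ satisfy a Jacobi-type equation with modified curvature $R^\perp + 3A_tA_t^*$, the extra term being positive semidefinite. In nonnegative curvature this forces Corollary~\ref{cor: decomp}: every such family splits as the span of Jacobi fields that vanish somewhere together with parallel Jacobi fields. Applied along horizontal geodesics to the natural family of Jacobi fields coming from the submersion, this ODE rigidity yields the singular Riemannian foliation structure and the flats of part~(b) directly, without ever requiring parallel transport to preserve $H \oplus V$.
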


An analogous theorem holds 
for Riemannian submersions 
on compact nonnegatively curved manifolds.
A consequence  of the above theorem is that any non-contractible 
open nonnegatively curved manifold has an honest 
product as a metric quotient.

\begin{cor} Let $M$ be an open nonnegatively curved 
manifold and $\Sigma$ a soul of $M$. 
Then there is a noncompact Alexandrov space $A$ and 
a submetry 
\[
\sigma\colon M\rightarrow \Sigma \times A 
\]
onto the metric product $\Sigma \times A$.
The fibers of $\sigma$ are smooth compact submanifolds 
without boundary.
\end{cor}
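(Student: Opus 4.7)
Define $A:=M/\folF^\#$ to be the leaf space of the dual foliation with quotient map $Q\colon M\to A$, and set
\[
\sigma(p):=(P(p),Q(p)).
\]
The claim is that $A$ is a noncompact nonnegatively curved Alexandrov space and that $\sigma$ is the desired submetry with smooth compact fibers. I would first verify that every dual leaf is compact: as observed in the discussion preceding Theorem~\ref{thm: dual fol}, $\folL^\#(\exp v)=\exp(S(v))$ and $S(v)\cap\nu_q(\Sigma)$ is the normal holonomy orbit of a parallel transport of $v$ to $q$, hence a compact orbit of the normal holonomy group on a fixed sphere in $\nu_q(\Sigma)$; compactness of $\Sigma$ then gives compactness of the whole leaf. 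Combined with Theorem~\ref{thm: dual fol}(a) this makes $A$ into an Alexandrov space of nonnegative curvature, noncompact because $M$ is. Surjectivity of $\sigma$ follows by horizontal lifting: given $(q,a)\in\Sigma\times A$, pick $p'\in Q^{-1}(a)$ and horizontally lift (for $P$) any curve in $\Sigma$ from $P(p')$ to $q$; by the definition of dual leaves the lift stays inside $\folL^\#(p')=Q^{-1}(a)$ and ends at some $p\in\sigma^{-1}(q,a)$.

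The submetry property is the heart of the argument. For the $1$-Lipschitz direction I would decompose $T_pM$ orthogonally as $H\oplus V_1\oplus V_2$, where $H$ is the $P$-horizontal space, $V_2$ is the $\folF^\#$-perpendicular space (which is automatically contained in the $P$-vertical space, since $H\subset T_p\folL^\#(p)$), and $V_1$ is the remaining $P$-vertical space tangent to the dual leaf. Then $|dP(\dot\gamma)|=|\dot\gamma_H|$ and $|dQ(\dot\gamma)|=|\dot\gamma_{V_2}|$, so $|\dot\gamma|^2\ge |\dot\gamma_H|^2+|\dot\gamma_{V_2}|^2$, and Minkowski's inequality yields
\[
L(\gamma)\ge\sqrt{L(P\circ\gamma)^2+L(Q\circ\gamma)^2}\ge\sqrt{d_\Sigma^2+d_A^2}.
\]
For the reverse inclusion $B_r(\sigma(p))\subseteq\sigma(B_r(p))$: given $(q,a)$ with product distance $\le r$ from $\sigma(p)$, set $s:=d_\Sigma(P(p),q)$, $t:=d_A(Q(p),a)$ and choose a $P$-horizontal unit vector $u\in T_pM$ with $dP(u)$ the initial direction of a shortest $\Sigma$-geodesic to $q$, together with an $\folF^\#$-perpendicular unit vector $v\in T_pM$ with $dQ(v)$ the initial direction of a shortest $A$-geodesic to $a$. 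Theorem~\ref{thm: dual fol}(b) furnishes a totally geodesic immersed flat $\R^2$ through $p$ with $u,v$ as tangent directions. On this flat, the $u$-axis is a $P$-horizontal geodesic that projects isometrically to the chosen $\Sigma$-geodesic while $P$ is constant in the $v$-direction; symmetrically $Q$ projects the $v$-axis isometrically to the chosen $A$-geodesic and is constant in the $u$-direction. The diagonal geodesic of length $\sqrt{s^2+t^2}\le r$ in this flat terminates at a point $p'$ with $\sigma(p')=(q,a)$.

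Finally, the fibers: by Theorem~\ref{thm: soul con}(a) one has $P^{-1}(q)=\exp(\nu_q(\Sigma))$, and intersecting with $Q^{-1}(a)$ amounts under $\exp$ to intersecting $\nu_q(\Sigma)$ with a dual leaf, which is a normal holonomy orbit --- a smooth compact submanifold of $\nu_q(\Sigma)$ without boundary. The main obstacle lies in the middle paragraph: guaranteeing that the flat $\R^2$ supplied by Theorem~\ref{thm: dual fol}(b) extends as an isometric immersion far enough to contain the diagonal of length $\sqrt{s^2+t^2}$, and that its coordinate axes do project under $P$ and $Q$ to the prescribed $\Sigma$- and $A$-geodesics globally rather than only infinitesimally. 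Turning the infinitesimal flat-plane rigidity of Theorem~\ref{thm: dual fol}(b) into this global statement is the technical core of the proof, and is what justifies calling $\sigma$ a submetry onto the actual metric product.
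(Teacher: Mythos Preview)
Your overall architecture matches the paper's: set $\sigma=(P,Q)$ with $Q$ the projection to the dual leaf space, and use the transversal flat of Theorem~\ref{thm: dual fol}(b) to verify the submetry property. The paper itself only sketches this, saying that $A$ is obtained from the dual foliation via Theorem~\ref{thm: dual fol}.

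There is, however, a genuine gap in your definition of $A$. You take $A=M/\folF^\#$ and argue that each dual leaf is compact because $S(v)\cap\nu_q(\Sigma)$ is a holonomy orbit on a sphere. But the normal holonomy group of the soul need not be a \emph{closed} subgroup of $\SO(n-k)$; as the paper remarks just before Theorem~\ref{thm: dual fol}, any connected subgroup of $\SO(n-k)$ can occur as the normal holonomy of a simply connected soul, and such a subgroup can be an irrational winding in a maximal torus. In that case the holonomy orbit through $v$ is not closed in the normal sphere, $S(v)$ is not closed in $\nu(\Sigma)$, and the dual leaf $\exp(S(v))$ is not compact. The leaf space $M/\folF^\#$ is then not even Hausdorff, let alone an Alexandrov space.

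The paper's remedy is to take $A$ to be the space of \emph{closures} of dual leaves. Since $\folF^\#$ is a singular Riemannian foliation by Theorem~\ref{thm: dual fol}(a), the leaf closures again form a singular Riemannian foliation (Molino theory), and the quotient by leaf closures carries a well-defined inner metric of nonnegative curvature. Your submetry argument then goes through with $Q$ reinterpreted as projection to leaf closures, and the fibers of $\sigma$ become intersections of $\exp(\nu_q(\Sigma))$ with leaf closures, i.e.\ orbits of the \emph{closure} of the normal holonomy group --- these are the smooth compact submanifolds the corollary asserts. Your identification of the technical core (extending the infinitesimal flat to a global one that projects correctly under both $P$ and $Q$) is accurate and is indeed where the work of \cite{Wi: dual} enters.
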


We recall that $\sigma\colon M\rightarrow B$ is called a submetry 
if $\sigma(B_r(p))=B_r(\sigma(p))$ for all $p$ and $r$.
The space $A$ is given by the space of closures 
of dual leaves, which by Theorem~\ref{thm: dual fol} 
can be endowed with a natural quotient metric.

The main new tool used to prove these results 
is a simple and general observation which may very well be useful in 
different context as well. It allows to give what we call transversal 
Jacobi field estimates.
Let $c\colon I\rightarrow (M,g)$ be a geodesic in 
a Riemannian manifold $(M,g)$, and let
$\bbJ$  be an $(n-1)-$dimensional family of normal
Jacobi fields for which the 
 corresponding Riccati operator is self adjoint. 
Recall that the Riccati operator $L(t)$ is the endomorphism of 
$(\dc(t))^\perp$ defined by $L(t) J(t)=J'(t)$ 
for $J\in \bbJ$.
Suppose we have a vector subspace
  $\bbV\subset\bbJ$. 
Put 
\[
T^v_{c(t)}M:=\{J(t)\mid J\in \bbV\}\oplus \{J'(t)\mid J\in \bbV, J(t)=0\}.
\]
Observe that the second summand vanishes for almost every 
$t$ and that  $T^v_{c(t)}M$ depends smoothly on $t$.
We let $T^{\perp}_{c(t)}M$ denote the orthogonal 
complement of   $T^v_{c(t)}M$, and 
for $v\in T_{c(t)}M$ we define $v^{\perp}$ 
as the orthogonal projection of $v$ to $T^{\perp}_{c(t)}M$. 
If $L$ is non-singular at $t$ we put 
\[
A_t\colon T^v_{c(t)}M\rightarrow T^{\perp}_{c(t)}M, \mbox{ 
 $J(t)\mapsto J'(t)^{\perp}$ for $J\in \bbV$}.
\] 
It is easy to see that $A$ can be extended continuously on $I$. 
For a vector field $X(t)\in T^{\perp}_{c(t)}M$ we 
define $\tfrac{\nabla^{\perp}X}{\partial t}= (X'(t))^\perp$.

\begin{thm}\label{thm: jacobi}\label{thm: transverse estimate}
 Let $J\in \bbJ -   \bbV$ and put $Y(t):=J^{\perp}(t)$.
 Then $Y$ satisfies the following Jacobi equation 
\[
\tfrac{(\nabla^{\perp})^2}{\partial t^2}Y(t) +
 \bigl(R(Y(t),\dc(t))\dc(t)\bigr)^{\perp} + 3 A_tA^*_tY(t)=0.
\]
\end{thm}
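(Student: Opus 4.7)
The plan is to pick a basis $V_1,\ldots,V_k$ of $\bbV$ and work at a generic time $t$ where $L(t)$ is nonsingular, so that $T^v_{c(t)}M = \mathrm{span}\{V_i(t)\}$; the equation at exceptional times will then follow by continuity, since $A_t$ extends continuously by hypothesis and every other object in sight does as well. Write $Y(t) = J(t) - \sum_i f_i(t)V_i(t)$ for smooth functions $f_i$, so that $\sum f_iV_i$ is the orthogonal projection of $J$ onto $T^v$. Two simple tensorial identities organize everything: for any vertical vector field $V(t)\in T^v_{c(t)}M$, $(V')^\perp = A_tV$ (expand $V$ in the $V_i$ basis and use the definition of $A_t$ on the basis vectors); and for any horizontal vector field $X(t)\in T^\perp_{c(t)}M$, $(X')^v = -A_t^*X$ (differentiate $\langle X,V\rangle = 0$ against a vertical field $V$ and invoke the first identity together with the adjoint relation).

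The heart of the argument, and the only step that uses anything beyond bookkeeping, is the identity
\[
\sum_i f_i'(t)\, V_i(t) \;=\; 2\, A_t^*\, Y(t).
\]
I would prove this by invoking the self-adjointness of the Riccati operator of $\bbJ$, which gives $\langle J',V_j\rangle = \langle J,V_j'\rangle$ for each $j$. Expanding $J = \sum f_iV_i + Y$ on both sides and using self-adjointness already within $\bbV$ (namely $\langle V_i,V_j'\rangle = \langle V_i',V_j\rangle$) cancels the $\sum f_i\langle V_i',V_j\rangle$ terms, leaving $\sum_i f_i'\langle V_i,V_j\rangle = \langle Y,V_j'\rangle - \langle Y',V_j\rangle$. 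The two tensorial identities above convert the right-hand side to $2\langle A_t^*Y,V_j\rangle$; since $\{V_j(t)\}$ spans $T^v_{c(t)}M$, this determines $\sum f_i'V_i$ uniquely. The factor of $2$ here, which comes from the self-adjointness of the \emph{full} family $\bbJ$ rather than just $\bbV$, is exactly what produces the coefficient $3$ in the final formula.

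The rest is direct computation. Differentiating $Y = J - \sum f_iV_i$ twice and using the Jacobi equations for $J$ and each $V_i$ yields
\[
Y'' + R(Y,\dc)\dc \;=\; -\sum_i f_i''V_i \;-\; 2\sum_i f_i'V_i'.
\]
Projecting orthogonally, the $\sum f_i''V_i$ sum vanishes (it is vertical), while $(V_i')^\perp = A_tV_i$ combined with the key identity turns $2\sum f_i'V_i'$ into $4A_tA_t^*Y$, giving $(Y'')^\perp = -(R(Y,\dc)\dc)^\perp - 4A_tA_t^*Y$. Finally, from $(Y')^v = -A_t^*Y$ and $(V')^\perp = A_tV$ applied to the vertical field $-A_t^*Y$, one obtains $(((Y')^v)')^\perp = -A_tA_t^*Y$, so
\[
\frac{(\nabla^\perp)^2}{\partial t^2}Y \;=\; (Y'')^\perp - (((Y')^v)')^\perp \;=\; -(R(Y,\dc)\dc)^\perp - 3A_tA_t^*Y,
\]
which is the claimed transversal Jacobi equation.
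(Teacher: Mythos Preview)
The survey does not actually supply a proof of this theorem; it is stated as a result from [Wilking, 2005] and followed only by the remark that the O'Neill type term $3A_tA_t^*$ is positive semidefinite. So there is no proof in the paper to compare against.

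That said, your argument is correct and is essentially the standard computation. The two tensorial identities are right: for a vertical field $V=\sum g_iV_i$ one has $(V')^\perp=\sum g_i(V_i')^\perp=A_tV$, and differentiating $\langle X,V_j\rangle=0$ gives $(X')^v=-A_t^*X$ for horizontal $X$. Your key identity $\sum f_i'V_i=2A_t^*Y$ is derived correctly from the self-adjointness relation $\langle J',V_j\rangle=\langle J,V_j'\rangle$ applied across the full family $\bbJ$, and the bookkeeping in the final two paragraphs checks out: $(Y'')^\perp=-(R(Y,\dc)\dc)^\perp-4A_tA_t^*Y$, while $(((Y')^v)')^\perp=-A_tA_t^*Y$, so the difference gives the coefficient $3$. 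Working at generic $t$ and extending by continuity is the right way to handle the exceptional times. This is, up to notation, how the proof in the original reference proceeds.
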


One should consider $\bigl(R(\cdot,\dc(t))\dc(t)\bigr)^{\perp} + 3 A_tA^*_t$ 
as the modified curvature operator.  
The crucial point in the equation is that the additional  O'Neill type term
$ 3 A_tA^*_t$  is positive 
semidefinite.

\begin{cor}\label{cor: decomposition}\label{cor: decomp} 
Consider  an $n-1$-dimensional family $\bbJ$ of 
normal Jacobi fields with a self adjoint Riccati operator
 along a geodesic $c\colon \R \rightarrow M$
 in a
nonnegatively curved manifold. 
Then 
\[
\bbJ=\spann_{\R}\bigl\{ J\in \bbJ \mid \mbox{ $J(t)=0$ for some $t$}\bigr\}\oplus 
\bigl\{ J\in \bbJ\mid \mbox{ $J$ is parallel}\bigr\}.
\]
\end{cor}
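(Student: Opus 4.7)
The plan is to apply Theorem~\ref{thm: transverse estimate} and then run a Riccati ODE argument on the entire real line. Let $\bbV\subset\bbJ$ be the span of those Jacobi fields which vanish at some $t\in\R$, and let $\Pi\subset\bbJ$ be the subspace of parallel Jacobi fields. A parallel Jacobi field that vanishes at a point is identically zero, so $\bbV\cap\Pi=\{0\}$, and the corollary reduces to showing $\dim\bbV+\dim\Pi=n-1$, i.e.\ that every class in $\bbJ/\bbV$ is represented by a parallel field.

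I would apply Theorem~\ref{thm: transverse estimate} to the pair $(\bbJ,\bbV)$. For each $J\in\bbJ$ the transverse component $Y=J^\perp$ takes values in $T^\perp_{c(t)}M$ and satisfies the modified Jacobi equation with modified curvature operator
\[
\tilde R:=\bigl(R(\cdot,\dc)\dc\bigr)^\perp+3A_tA_t^*,
\]
which is pointwise nonnegative since $M$ has nonnegative sectional curvature and $A_tA_t^*\ge 0$. The transverse family inherits a Riccati operator $\tilde L$ on $T^\perp_{c(t)}M$ which is self-adjoint (inherited from $\bbJ$) and solves $\tilde L'+\tilde L^2+\tilde R=0$. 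Crucially, passing to the quotient $\bbJ/\bbV$ has eliminated the vanishing loci that would otherwise be singularities of the Riccati, so $\tilde L$ is defined on all of $\R$.

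The main step is a Riccati comparison on the line. Differentiating $\langle v(t),\tilde L(t)v(t)\rangle$ along a parallel extension of a unit eigenvector $v$ of $\tilde L(t_0)$ with eigenvalue $\mu$ gives $-|\tilde Lv|^2-\langle v,\tilde Rv\rangle\le-\mu^2$ at $t_0$. Applied to the top eigenvalue this yields the Dini inequality $\lambda_{\max}'\le-\lambda_{\max}^2$, whose positive solutions blow up backward in finite time; since $\tilde L$ exists on all of $\R$ one obtains $\lambda_{\max}\le 0$. The symmetric argument forces $\lambda_{\min}\ge 0$, hence $\tilde L\equiv 0$ and therefore $\tilde R\equiv 0$ on $T^\perp$. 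Separating the two nonnegative summands in $\tilde R$ gives $\bigl(R(Y,\dc)\dc\bigr)^\perp\equiv 0$ and $A_t^*Y\equiv 0$ for every transverse Jacobi field $Y$; since such $Y$ generically span $T^\perp$, one concludes $A_t\equiv 0$.

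The last task is to lift each transverse parallel field to an actual parallel Jacobi field in $\bbJ$. The vanishing of $A_t$ forces parallel transport along $c$ to preserve the splitting $(\dc)^\perp=T^v\oplus T^\perp$, so the transverse fields, being parallel in $T^\perp$, are in fact parallel in $TM$. Polarizing $\langle Y,R(Y,\dc)\dc\rangle\equiv 0$ via the symmetry of $Y\mapsto R(Y,\dc)\dc$ shows $R(Y,\dc)\dc\in T^v$, and one must then absorb this vertical remainder by modifying $J$ with an element of $\bbV$, so that the modified field is parallel and still satisfies the Jacobi equation. This lifting is where I expect the main obstacle: one has to use the algebraic definition of $T^v$ (spanned by values and vanishing-point derivatives of fields in $\bbV$) together with self-adjointness of the Riccati to show that the vertical correction $J-J^\perp$ actually lies in $\bbV$, thereby producing the desired parallel representative and completing the direct sum decomposition.
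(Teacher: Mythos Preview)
The paper states this corollary without proof, and your route through Theorem~\ref{thm: transverse estimate} followed by a global Riccati comparison is exactly the intended one; the argument that a self-adjoint solution of $\tilde L'+\tilde L^2+\tilde R=0$ with $\tilde R\ge 0$ on all of $\R$ must vanish is correct. Two points, however, need to be completed.

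Your assertion that $\tilde L$ is everywhere nonsingular is true but uses the self-adjointness hypothesis nontrivially. If $J^\perp(t_0)=0$ for some $J\in\bbJ$, write $J(t_0)=K_1(t_0)+K_2'(t_0)$ with $K_1,K_2\in\bbV$ and $K_2(t_0)=0$ according to the definition of $T^v_{c(t_0)}M$. The self-adjoint Riccati gives $\ml J_1',J_2\mr\equiv\ml J_1,J_2'\mr$ for all $J_1,J_2\in\bbJ$; applied to $J-K_1$ and $K_2$ at $t_0$ this yields $|K_2'(t_0)|^2=0$, so $(J-K_1)(t_0)=0$ and hence $J\in\bbV$.

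The lifting step is cleaner than you anticipate, and the absorption procedure you propose is unnecessary once you observe that the \emph{full} curvature $R(\cdot,\dc)\dc$ (not just its $T^\perp$-component) annihilates $T^\perp$. Indeed, from $A\equiv 0$ the splitting $T^v\oplus T^\perp$ is parallel and, by self-adjointness, preserved by $L$, with $L|_{T^\perp}=\tilde L\equiv 0$. Feeding a parallel section $W$ of $T^\perp$ into the ambient Riccati equation $L'+L^2+R(\cdot,\dc)\dc=0$ at regular times gives $L'W=(LW)'=0$ and $L^2W=0$, hence $R(W,\dc)\dc=0$. Thus each $Y=J^\perp$ is already a parallel Jacobi field in the honest sense. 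To see that $Y\in\bbJ$, note that for every $K\in\bbJ$ one has $K'=LK=LK^v\in T^v$ at regular times, so $\ml Y',K\mr-\ml Y,K'\mr=0$; since $\bbJ$ is $(n-1)$-dimensional and isotropic for this Wronskian pairing it is maximal isotropic, and $Y\in\bbJ$ follows. Now $J-Y\in\bbJ$ has value $J^v(t_0)\in T^v_{c(t_0)}$ at any regular $t_0$, so it agrees there with some $K\in\bbV$, and $J-Y-K$ vanishes at $t_0$, forcing $J-Y\in\bbV$.
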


\subsection{Which bundles occur?}
The major open problem in the subject 
is 

\begin{prob} Let $(\Sigma,g)$ be a nonnegatively curved 
compact manifold. Which vectorbundles  
$E$ over $\Sigma$ admit nonnegatively curved metrics 
such that the zero section of the bundle is a soul?
\end{prob}

If $L$ is a nonnegatively curved compact manifold  
with a free isometric $\Or(k)$ action, then 
the corresponding bundle $L\times_{\Or(k)}\R^k$ 
has a nonnegatively curved metric with the zero section being the 
soul. It is remarkable that all examples of open nonnegatively 
curved manifolds
constructed so far are diffeomorphic to examples arising in this 
way. On the other hand the above method is rather 
flexible already.
From Theorem~\ref{thm: grove ziller} it follows 

\begin{thm}[Grove and Ziller] All vectorbundles over $\Sph^4$ 
admit complete nonnegatively curved metrics.
\end{thm}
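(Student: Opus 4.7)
The plan is to combine the two Grove--Ziller theorems just quoted with the standard associated bundle construction. Since $\Sph^4$ is simply connected, every real vector bundle $E\to\Sph^4$ is orientable, and so every rank $n$ such bundle arises from its orthonormal frame bundle as
\[
E \;\cong\; P\times_{\SO(n)}\R^n,
\]
where $P\to\Sph^4$ is a principal $\SO(n)$-bundle and $\SO(n)$ acts on $\R^n$ in the standard way. The cases $n\le 2$ are trivial since $H^1(\Sph^4,\Z/2)$ and $H^2(\Sph^4,\Z)$ vanish, so any such bundle is a product and the product of flat metrics already works.

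For $n\ge 3$ I would first invoke the preceding theorem of Grove and Ziller to equip $P$ with a cohomogeneity one action of $\gS^3\times\SO(n)$ whose singular orbits have codimension two; by construction the associated isotropy data $(\gH,\gK_\pm)$ satisfies $\gK_\pm/\gH\cong\Sph^1$. Theorem~\ref{thm: grove ziller} then produces a $\gS^3\times\SO(n)$-invariant metric $g_P$ of nonnegative sectional curvature on $P$ by gluing the two associated disc bundles over the singular orbits.

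Next I would put the product metric of $g_P$ and the flat Euclidean metric on $P\times\R^n$; this metric is complete and has nonnegative sectional curvature. The diagonal $\SO(n)$-action $h\cdot(p,v)=(p\cdot h^{-1},\,hv)$ is free (since it is free on $P$) and isometric, so the O'Neill formulas transfer nonnegative sectional curvature to the quotient
\[
E \;=\; \bigl(P\times\R^n\bigr)/\SO(n).
\]
Completeness passes to the quotient of a complete manifold by a free proper isometric action, and the zero section, which is the image of the invariant totally geodesic submanifold $P\times\{0\}$, realizes $\Sph^4$ as a totally geodesic soul of $E$.

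The only step beyond standard bundle theory is the cohomogeneity one decomposition of principal $\SO(n)$-bundles over $\Sph^4$, and this is precisely the Grove--Ziller theorem already quoted, whose proof proceeds via the characteristic class comparison indicated in the paper. Everything else, namely the associated bundle presentation and the submersion transfer of nonnegative curvature, is routine; so I expect no genuine obstacle beyond citing the two preceding theorems in the correct order.
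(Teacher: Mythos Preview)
Your argument is correct and is precisely the route the paper takes: the paper simply records that the associated bundle construction $L\times_{\Or(k)}\R^k$ inherits nonnegative curvature whenever $L$ carries a free isometric $\Or(k)$-action, and then says the result ``follows from Theorem~\ref{thm: grove ziller}'' (the gluing theorem applied to the cohomogeneity one principal $\SO(n)$-bundles over $\Sph^4$). You have just unpacked that one-line deduction, including the trivial low-rank cases and the check that the Grove--Ziller metric on $P$ is $\SO(n)$-invariant so that the diagonal action is isometric.
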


It is not known whether one can find nonnegatively 
metrics such that the souls are isometric to the round sphere. 
The souls of the Grove--Ziller metrics have lots zero curvature 
planes. All of the relatively few vectorbundles over $\Sph^5$  
also admit nonnegatively curved metrics [Rigas, 1985]. 
However, in general Cheeger and Gromoll's question 
which bundles over a sphere admit nonnegatively curved metrics remains open.

We mention in some cases one can say a bit more 
about which bundles occur: if either the soul has infinite fundamental group or 
if one fixes the isometry type of the soul. 
\"Ozaydin and Walschap [1994]
 observed that a flat soul necessarily has a flat 
normal bundle. 
If one has an open manifold with infinite fundamental group 
then, by  Theorem~\ref{thm: deforming}
 one can deform the metric within the space of nonnegatively curved
 metrics 
such that a finite cover is isometric to $T^d\times M$,
 where $M$ is simply connected. 
 This in turn shows that the normal bundle 
of the soul $T^d\times \Sigma'$ is canonically isomorphic to the pull back
 of a bundle over the simply connected factor $\Sigma'$. 
The question whether such a bundle can also be written  as 
a twisted bundle over $T^d\times \Sigma'$ was studied in great detail 
by  Belegradek and Kapovitch [2003] using rational homotopy theory.

Moreover one can analyze the situation if the soul is 
isometric to a simply connected  product $\Sigma=\Sigma_1\times \Sigma_2$.
 Although this is just an observation due to the author
 we carry out some details 
here since they can not be found in the literature.
If $u_i\in T_p\Sigma$ is tangent to the $i$-th factor $(i=1,2)$, 
then $R(u_1,u_2)v=0$ for $v\in \nu_p(\sigma)$.
 By ''integrating'' this equation we deduce  
that for a closed curve $\gamma(t)=(\gamma_1(t),\gamma_2(t))$ 
the normal parallel transport $\Par_{\gamma}$ 
decomposes $\Par_{\gamma}=
\Par_{\gamma_1}\circ \Par_{\gamma_2}=\Par_{\gamma_2}\circ \Par_{\gamma_1}$. 
Thus the normal holonomy group is given as the product of two commuting 
subgroups. Each subgroup gives rise to a principle bundle 
over $\Sigma$ which is isomorphic to the pull back bundle 
of a principle bundle over $\Sigma_i$ under the natural projection 
$\Sigma\rightarrow \Sigma_i$.
If we decompose the normal bundle into parallel 
subbundles $\nu(\Sigma)=\nu_1(\Sigma)\oplus \cdots \oplus \nu_l(\Sigma)$ 
such that on each summand the holonomy group 
is irreducible, then each summand is isomorphic 
to a tensor product $\nu_i(\Sigma)=\nu_{i1}(\Sigma)\otimes_{\fK}\nu_{i2}(\Sigma)$
where $\nu_{ij}(\Sigma)$ is isomorphic 
to the pull back of a $\fK$ vectorbundle bundle over $\Sigma_j$ 
under the natural projection $\Sigma \rightarrow \Sigma_j$, $j=1,2$ 
and $\fK\in \{\R,\C,\fH\}$ depends on $i$.

Since any vectorbundle over $\Sph^3$ is trivial, we deduce. 

\begin{cor} Suppose the soul is isometric to a product $\Sph^3\times \Sph^3$ 
then the normal bundle of the soul is trivial.
\end{cor}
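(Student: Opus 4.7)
The plan is to feed $\Sigma = \Sph^3 \times \Sph^3$ into the decomposition of the normal bundle that was just described in the paragraph preceding the corollary, and then show that every building block is forced to be a trivial bundle.

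First, I would apply the structural result verbatim. Since $\Sigma$ is a simply connected product, the normal bundle splits as a direct sum of parallel subbundles $\nu(\Sigma) = \nu_1(\Sigma) \oplus \cdots \oplus \nu_l(\Sigma)$, each with irreducible normal holonomy, and each summand decomposes as a $\fK$-tensor product
\[
\nu_i(\Sigma) \;\cong\; \nu_{i1}(\Sigma) \otimes_{\fK} \nu_{i2}(\Sigma),
\]
where $\nu_{ij}(\Sigma)$ is the pullback of a $\fK$-vector bundle $E_{ij} \to \Sph^3$ under the projection $\pi_j\colon \Sph^3 \times \Sph^3 \to \Sph^3$, and $\fK \in \{\R,\C,\fH\}$.

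Second, I would invoke the input quoted just before the corollary: every (real, complex, or quaternionic) vector bundle over $\Sph^3$ is trivial. Indeed, such bundles are classified by $\pi_2$ of the appropriate structure group, and these groups vanish for $\Or(k)$, $\U(k)$ and $\Sp(k)$. Consequently each $E_{ij}$ is trivial, so each pullback $\nu_{ij}(\Sigma) = \pi_j^* E_{ij}$ is trivial as a $\fK$-bundle over $\Sigma$.

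Finally, the tensor product over $\fK$ of two trivial $\fK$-bundles is a trivial real bundle (the underlying free $\fK$-modules pair to a free $\fK$-module, whose realification is trivial), and a direct sum of trivial real bundles is trivial. Hence $\nu(\Sigma)$ is trivial. The only delicate point worth verifying carefully is that each $\nu_{ij}$ really is a pullback from a single $\Sph^3$-factor, but this has already been established in the paragraph above the corollary from the vanishing $R(u_1,u_2)v = 0$ for $u_i$ tangent to the $i$-th factor and $v$ normal; so there is no substantial obstacle and the corollary follows.
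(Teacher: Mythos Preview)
Your proposal is correct and follows exactly the approach the paper intends: apply the tensor-product decomposition of the normal bundle established in the paragraph immediately preceding the corollary, and then use that every $\fK$-vector bundle over $\Sph^3$ is trivial (since $\pi_2$ of the relevant structure group vanishes). The paper's own argument is just the one-line remark ``Since any vectorbundle over $\Sph^3$ is trivial, we deduce,'' so you have supplied the details it leaves implicit.
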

\hspace*{1em}

\subsection{ The space of nonnegatively curved metrics.} 
Perelman's theorem indicates that the moduli space 
of metrics should be rather small. On the other hand one can not expect too much.
Belegradek used the method of Grove and Ziller 
to exhibit the following phenomena. 

\begin{thm}[Belegradek] There is a non-compact manifold 
$M$ that admits a sequence of complete nonnegatively curved 
metrics $(g_k)_{k\in \fN}$ such that the souls 
of $(M,g_k)$ are pairwise non-diffeomorphic.
\end{thm}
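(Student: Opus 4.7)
The plan is to produce the manifold $M$ as a product $M=S\times\R$ (endowed with product metrics) and realize the ``same'' underlying smooth manifold $S\times\R$ as having many distinct closed nonnegatively curved hypersurfaces sitting in it as souls. The mechanism connecting the geometry to the topology is that h-cobordant closed manifolds have diffeomorphic products with $\R$, so I look for an infinite family of pairwise non-diffeomorphic but mutually h-cobordant closed nonnegatively curved manifolds.

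First I would invoke the Grove--Ziller construction on principal $\SO(4)$-bundles over $\Sph^4$. By Theorem~\ref{thm: grove ziller} every such principal bundle $P_k$ carries a cohomogeneity one $\gS^3\times\SO(4)$-invariant metric of nonnegative sectional curvature. The associated sphere bundles $S_k:=P_k\times_{\SO(4)}\Sph^3$ are then nonnegatively curved by the O'Neill formulas; these are $7$-dimensional $\Sph^3$-bundles over $\Sph^4$, parametrized (up to equivalence) by $\pi_3(\SO(4))\cong\Z\oplus\Z$, hence by a pair of integers $(m,n)$ encoding the Euler number and first Pontryagin class. Next I would extract from this family an infinite subsequence $(S_{k})_{k\in\N}$ whose members are pairwise homeomorphic (or at least h-cobordant) but pairwise non-diffeomorphic. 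This is exactly the content of the Kreck--Stolz classification of $\Sph^3$-bundles over $\Sph^4$: inside a fixed homeomorphism class one finds infinitely many smooth structures distinguished by an Eells--Kuiper or Kreck--Stolz $s$-invariant. In particular the members of such a family are mutually h-cobordant since homeomorphic simply connected manifolds of dimension $\ge 5$ are h-cobordant via the obvious product-cobordism obstruction vanishing.

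Having the $S_k$ in hand, the next step is the topological identification of the total spaces. I would use the classical fact (a consequence of the h-cobordism theorem in dimension $\ge 6$) that if $S_1$ and $S_2$ are h-cobordant simply connected closed manifolds of dimension $\ge 5$, then $S_1\times\R$ and $S_2\times\R$ are diffeomorphic: an h-cobordism $W$ from $S_1$ to $S_2$ satisfies $W\cong S_1\times[0,1]$, and the open ``infinite telescope'' construction yields a diffeomorphism of $S_1\times\R$ with $S_2\times\R$. Therefore $M:=S_1\times\R$ is canonically identified with $S_k\times\R$ for every $k$. Pulling back the product metric $g_k$ of the Grove--Ziller metric on $S_k$ with the standard metric on $\R$ gives a sequence of complete nonnegatively curved metrics on the fixed smooth manifold $M$. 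For the product metric on $S_k\times\R$ a soul is manifestly $S_k\times\{0\}$ (it is totally convex, totally geodesic, and the signed distance to it is concave on the Euclidean factor), hence diffeomorphic to $S_k$. Since the $S_k$ are pairwise non-diffeomorphic, the souls of $(M,g_k)$ are too.

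The main obstacle is the middle step: exhibiting infinitely many smoothings within a single h-cobordism class inside the Grove--Ziller family. This is not a geometric argument but a subtle topological one, requiring a refined invariant strong enough to detect smooth structure on $\Sph^3$-bundles over $\Sph^4$ while being coarse enough to ignore the underlying homeomorphism type; the Kreck--Stolz $s$-invariants do this job, and a careful arithmetic analysis of their dependence on the parameters $(m,n)$ classifying $P_k$ is what produces the infinite subsequence. Once this purely topological input is in place, the geometric half of the argument is essentially an observation.
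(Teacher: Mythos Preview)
Your overall strategy---Grove--Ziller metrics on $\Sph^3$-bundles over $\Sph^4$, Kreck--Stolz invariants to extract infinitely many pairwise non-diffeomorphic closed nonnegatively curved $7$-manifolds $S_k$, then realize them all as souls in a single open manifold---is indeed Belegradek's route, and the survey says as much (no proof is given there). The fatal gap is the identification step. Your assertion that homeomorphic simply connected closed manifolds of dimension $\ge 5$ are smoothly h-cobordant is backwards: Smale's h-cobordism theorem says precisely that any smooth h-cobordism between such manifolds is a product, so smoothly h-cobordant simply connected manifolds of dimension $\ge 5$ are \emph{diffeomorphic}. Hence if the $S_k$ are pairwise non-diffeomorphic they cannot be smoothly h-cobordant, and your telescope step has no input to feed on. (Mazur's swindle does yield $S_1\times\R\cong S_2\times\R$ from an h-cobordism with non-trivial Whitehead torsion, but here $\pi_1=0$, the Whitehead group is trivial, and every h-cobordism is already an s-cobordism.)

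The repair is to replace the h-cobordism step by a genuine \emph{stable} diffeomorphism statement: one checks that the relevant $S_k$ are pairwise tangentially homotopy equivalent and then invokes Mazur's theorem, or a direct tubular-neighborhood argument for embeddings into a common Euclidean space, to conclude that $S_k\times\R^l$ are all diffeomorphic for some fixed $l$, in general strictly larger than $1$. The product metrics then have souls $S_k\times\{0\}$ exactly as you say. The residual topological work is verifying tangential homotopy equivalence for the chosen subfamily of bundles; once that is in place, your outline goes through with $\R^l$ in place of $\R$.
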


The theorem shows that 
the moduli space of nonnegatively curved metrics on $M$
has infinitely many components. 
This is in sharp contrast to the space 
of nonnegatively curved metrics on $\Sph^2\times \R^2$.

\begin{thm}[Gromoll and Tapp] Up to a diffeomorphism  a nonnegatively curved 
metric on $\Sph^2\times \R^2$ is either a product metric
or the metric is invariant under the effective action of a two torus 
and it can be obtained as a quotient of a product metric on
 $\Sph^2 \times \R^2\times \R$ by a free $\R$-action.
\end{thm}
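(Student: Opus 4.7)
The plan is to apply the soul theorem: since $M\cong\Sph^2\times\R^2$ is simply connected and not contractible, the soul $\Sigma$ is a compact simply connected totally geodesic submanifold whose normal bundle is diffeomorphic to $M$, and matching dimension and topology forces $\Sigma\cong\Sph^2$ with trivial rank-two normal bundle. The Sharafutdinov retraction $P\colon M\to\Sigma$ is then a smooth Riemannian submersion with $\R^2$ fibers. Since $\Sigma$ is simply connected, the normal holonomy is a connected subgroup of $\Or(2)$, hence either trivial or all of $\SO(2)$. In the trivial case, Strake and Yim give an isometric splitting $M=\Sigma\times(\R^2,h)$, which is the product metric alternative.

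The bulk of the argument concerns the case of normal holonomy $\SO(2)$. The first step is to promote the normal holonomy to an isometric circle action $\sigma_1$ on $M$ with fixed set $\Sigma$. Fiberwise $\SO(2)$-rotation of $\nu(\Sigma)$ is globally well-defined by simple connectivity of $\Sigma$; to see it descends to an isometric action on $M$ via the diffeomorphism $\nu(\Sigma)\to M$ supplied by the soul construction, I would invoke Perelman's Theorem~\ref{thm: perelman} --- every pair $(u,v)$ with $u$ vertical and $v$ horizontal spans a totally geodesic flat and $P\circ\exp=\pi$ --- and apply Corollary~\ref{cor: decomp} to the three-parameter family of normal Jacobi fields along a horizontal geodesic emanating from a point of $\Sigma$. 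Two of these Jacobi fields are parallel (the two vertical ones, by the flat-strip theorem) and the third vanishes at $t=0$; this rigidifies the O'Neill tensor of $P$ enough that rotation by any fixed angle in the normal fibers defines an isometry of $M$.

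Next I would produce a second Killing field $Y$ commuting with the generator $X$ of $\sigma_1$ whose flow is periodic, so that $X$ and $Y$ generate an effective isometric $T^2$-action on $M$. Since $\SO(2)$ is transitive on each normal sphere, the dual foliation of $P$ has three-dimensional leaves equal to the distance spheres to $\Sigma$, and Theorem~\ref{thm: dual fol} combined with the transversal Jacobi field estimate of Theorem~\ref{thm: transverse estimate} exhibits an additional parallel direction along these dual leaves that is transverse to the $\sigma_1$-orbits. Integrating this direction yields the desired Killing field $Y$; periodicity of its flow then follows from compactness of its orbit closure in the compact dual leaves, together with a dimension count on the principal orbits of the resulting abelian group action.

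To realize $(M,g)$ as the claimed quotient, I would form $\tilde M:=\Sph^2\times\R^2\times\R$ with a product of a round metric on $\Sph^2$ and a flat metric on $\R^3$, and define a free $\R$-action by screw motion --- rotating $\Sph^2$ about the axis fixed by $\sigma_1$ and simultaneously translating in the last $\R$-factor at a rate prescribed by the angular-velocity ratio of $X$ and $Y$ on a principal orbit. The quotient $\tilde M/\R$ carries a natural $T^2$-action descending from the centralizer of the $\R$-action inside $\Iso(\tilde M)$, and a $T^2$-equivariant comparison of orbit spaces identifies this quotient isometrically with $(M,g)$. The principal obstacle is the construction of the commuting Killing field $Y$ and the verification that its orbits are closed rather than merely dense on some torus: $\sigma_1$ is almost a direct consequence of normal holonomy together with the Perelman flat-strip theorem, whereas extracting $Y$ requires combining Theorem~\ref{thm: dual fol} with the Jacobi-field splitting of Corollary~\ref{cor: decomp}, and the closed-orbit step rests on a careful rigidity analysis of the principal $T^k$-orbits; once the $T^2$-action is in hand, the identification with the screw-motion quotient is essentially formal.
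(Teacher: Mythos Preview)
The paper does not supply a proof of this theorem: it is a survey, and the Gromoll--Tapp result is merely quoted. So there is no ``paper's own proof'' to compare against; I can only assess your outline on its own terms.

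Your opening moves are sound: the soul is a totally geodesic $\Sph^2$ with trivial normal bundle, the normal holonomy of a simply connected soul is connected, and the trivial-holonomy case reduces to the Strake--Yim splitting. The difficulties are all in the $\SO(2)$-holonomy branch, and here your outline has real gaps. First, promoting fiberwise $\SO(2)$-rotation of $\nu(\Sigma)$ to an \emph{isometry} of $(M,g)$ is not automatic: the normal exponential map is in general not a diffeomorphism onto $M$, and even where it is, the pullback metric need not be rotation-invariant; your appeal to Perelman's flats and Corollary~\ref{cor: decomp} suggests the right ingredients but does not assemble them into a proof that the $A$-tensor and the fiber metrics are $\SO(2)$-equivariant. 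Second, and more seriously, the construction of the second Killing field $Y$ is not justified. Theorem~\ref{thm: dual fol} tells you that the dual foliation (here the metric spheres around $\Sigma$) is a singular Riemannian foliation and that horizontal--dual-vertical pairs span flats, but this does \emph{not} hand you a parallel vector field along the dual leaves, let alone one whose flow is isometric and closes up; ``integrating this direction'' presupposes exactly what has to be proved. The closed-orbit step you flag as the ``principal obstacle'' is indeed one, but it only becomes an issue after you have a second Killing field, and you do not yet have one. Finally, calling the identification with the screw-motion quotient ``essentially formal'' understates the work: you would need to match the full metric data (the fiber metrics of $P$, the connection, and the intrinsic metric on $\Sigma$) with those of the model, not just the $T^2$-orbit structure.
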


For a nontrivial $2$-dimensional vector bundles over $\Sph^2$
the space of nonnegatively curved metrics is somewhat more 
flexible. In fact Walschap [1988] showed that 
given an open four manifold with a soul $\Sph^2$ 
for which any zero curvature plane is tangent to one of the Perelman 
flats from Theorem~\ref{thm: soul con} the following holds:
Let $\tfrac{\partial}{\partial \varphi}$ denote one of the two unit vectorfields
in $M\setminus \Sigma$ tangent to the fibers 
of the Sharafutdinov retraction and whose integral curves 
have constant distance to the soul. If $f$ is an arbitrary function on $M$ 
with compact support contained in $M\setminus \Sigma$, then
the following metric  has nonnegative sectional curvature as well, 
\[
g_t(u,v):=g(u,v)+ tf(p) g\bigl(u,\tfrac{\partial}{\partial \varphi}\bigr)g\bigl(v,\tfrac{\partial}{\partial \varphi}\bigr)
\] 
for all $u,v \in T_pM$ and all
small $t$.

A partial rigidity result was established by Guijarro and Petersen [1997],

\begin{thm}  Let $(M,g)$ be an open nonnegatively curved 
manifold and $p\in M$. Suppose that for any sequence 
$p_n\in M$ converging to $\infty$ the corresponding  sequence $\scal(p_n)$ of scalar curvatures
tends to $0$. Then the soul of $M$ is flat.
\end{thm}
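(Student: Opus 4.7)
The plan is to argue one point $p\in\Sigma$ and one 2-plane $e_1\wedge e_2\subset T_p\Sigma$ at a time, fix a unit $v\in\nu_p\Sigma$, and propagate the hypothesized curvature decay along the geodesic ray $\gamma_v(t)=\exp_p(tv)$, $q_t:=\gamma_v(t)$, back to a vanishing statement on $\Sigma$ via Perelman's structure theorem and O'Neill.

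\emph{Step 1 (Perelman $+$ O'Neill).} By Theorem~\ref{thm: perelman}, $P\colon M\to\Sigma$ is a Riemannian submersion with $P(q_t)=p$ for every $t$. The Perelman flat tangent to $(e_i,v)$ gives a parallel Jacobi field $E_i(t)$ along $\gamma_v$ extending $e_i$; differentiating the identity $P\circ\exp_{\nu(\Sigma)}=\pi$ in the footpoint direction $e_i$ yields $dP_{q_t}(E_i(t))=e_i$, so $E_i(t)$ is the horizontal lift of $e_i$ at $q_t$. O'Neill's formula then gives, with $A$ the integrability tensor of $P$, the exact identity
\[
K_\Sigma(e_1,e_2)\;=\;K_M\bigl(E_1(t),E_2(t)\bigr)\;+\;3\bigl|A_{E_1(t)}E_2(t)\bigr|^{2}\qquad(\forall\,t\ge 0),
\]
whose left-hand side is independent of $t$.

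\emph{Step 2 (Scalar decay $\Rightarrow$ sectional decay).} Because $K_M\ge 0$, for any orthonormal frame $\{f_\alpha\}$ at $q$ the scalar curvature $\scal_M(q)=2\sum_{\alpha<\beta}K_M(f_\alpha,f_\beta)$ is a sum of nonnegative terms, so any sectional curvature at $q$ is bounded by $\tfrac12\scal_M(q)$. The hypothesis $\scal_M(q_t)\to 0$ therefore forces every sectional curvature at $q_t$ to tend to $0$; in particular $K_M(E_1(t),E_2(t))\to 0$, and Step~1 yields
\[
K_\Sigma(e_1,e_2)\;=\;3\lim_{t\to\infty}\bigl|A_{E_1(t)}E_2(t)\bigr|^{2}.
\]

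\emph{Step 3 (Forcing $|A|\to 0$).} It remains to show the limit above is zero. The plan is to complete $E_1(t),\ldots,E_k(t)$ to an orthonormal frame at $q_t$ by vertical vectors $F_1(t),\ldots,F_{n-k}(t)$ (with $F_1=\dot\gamma_v$) and use the antisymmetry $\langle A_XY,U\rangle=-\langle A_XU,Y\rangle$ for $X,Y$ horizontal and $U$ vertical to rewrite $\sum_{i<j}|A_{E_i}E_j|^{2}=\tfrac12\sum_{i,\alpha}|A_{E_i}F_\alpha|^{2}$. The mixed O'Neill identity identifies $|A_{E_i}F_\alpha|^{2}$ as the leading piece of $K_M(E_i,F_\alpha)$, and the latter again tends to $0$ at infinity by Step~2. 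To neutralize the remaining contributions in the mixed identity — the second fundamental form $T$ of the fibers of $P$ and the covariant derivatives $\nabla A$ and $\nabla T$ — I would exploit the exact algebraic relation $K_M(E_i,\dot\gamma_v)\equiv 0$ (the Perelman flat persisting along $\gamma_v$), together with Wilking's transversal Jacobi equation (Theorem~\ref{thm: transverse estimate}) applied along $\gamma_v$ with $\bbV$ the Perelman parallel family (whose Riccati operator is self-adjoint): the nonnegativity of the modified curvature operator $(R(\cdot,\dot\gamma_v)\dot\gamma_v)^{\perp}+3A_tA_t^{*}$ propagates the curvature decay into control of $T$ and $\nabla A$ along the ray.

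\emph{Main obstacle.} The decisive difficulty is Step~3. The horizontal O'Neill identity of Step~1 only redistributes $K_\Sigma$ between $K_M$ and $3|A|^{2}$, so it cannot by itself force $|A|^{2}\to 0$, and one must import genuine additional rigidity beyond O'Neill. Once the vanishing of $|A_{E_1}E_2|^{2}$ at infinity is established, Step~1 immediately yields $K_\Sigma(e_1,e_2)=0$, and since $p$ and the 2-plane $e_1\wedge e_2$ were arbitrary, the soul has identically vanishing sectional curvature and is flat.
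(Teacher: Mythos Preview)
The paper does not supply a proof of this statement; it is quoted as a result of Guijarro and Petersen [1997] in a survey. So there is no argument in the paper to compare against, and I evaluate your proposal on its own.

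Steps 1 and 2 are correct. The Perelman flat forces the parallel transport $E_i(t)$ of $e_i$ along $\gamma_v$ to coincide with the horizontal lift (you have $dP(E_i(t))=e_i$ and $|E_i(t)|=|e_i|$, which together pin down the horizontal lift), and the O'Neill identity is applied correctly. In nonnegative curvature each sectional curvature is dominated by $\tfrac12\scal$, so $\scal_M(q_t)\to 0$ indeed kills $K_M(E_1(t),E_2(t))$.

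Step 3 is a genuine gap, as you yourself flag. The horizontal O'Neill identity only says
\[
K_\Sigma(e_1,e_2)=K_M\bigl(E_1(t),E_2(t)\bigr)+3\,|A_{E_1(t)}E_2(t)|^{2},
\]
and Step~2 forces merely $|A_{E_1}E_2|^2(q_t)\to \tfrac13 K_\Sigma(e_1,e_2)$; nothing yet prevents this limit from being positive. Your proposed remedy via the mixed O'Neill formula
\[
K_M(E_i,F_\alpha)=|A_{E_i}F_\alpha|^{2}-|T_{F_\alpha}E_i|^{2}+\bigl\langle(\nabla_{E_i}T)_{F_\alpha}F_\alpha,\,E_i\bigr\rangle
\]
does not close: the derivative term has no sign, so $K_M(E_i,F_\alpha)\to 0$ gives no control on $|A_{E_i}F_\alpha|^{2}$ without independent decay of the $T$-terms. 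Your appeal to Theorem~\ref{thm: jacobi} is too vague to fill this --- you have not specified which family $\bbV$ you take along $\gamma_v$, which Jacobi fields you are projecting, nor how nonnegativity of the modified operator $\bigl(R(\cdot,\dot\gamma_v)\dot\gamma_v\bigr)^{\perp}+3A_tA_t^{*}$ yields the needed pointwise decay of $|A_{E_i}E_j|$. The one concrete consequence of the Perelman flat along $\gamma_v$ that you can extract, namely $\nabla_{\dot\gamma_v}E_i=0$ and hence $A_{E_i}\dot\gamma_v\equiv 0$, only kills the $\dot\gamma_v$-component of $A_{E_i}E_j$; when the fibers have dimension at least two the remaining vertical components are untouched.

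As written this is a reasonable set-up but not a proof. To complete it you must either produce a genuine argument that the $A$-tensor of the Sharafutdinov submersion decays along normal rays under the scalar-curvature hypothesis, or change strategy --- for instance by exploiting asymptotic rigidity of $(M,g)$ at infinity rather than O'Neill bookkeeping along a single ray. Consulting the original Guijarro--Petersen paper would be the right next step.
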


\section{Positively curved manifolds with symmetry}\label{sec: pos}

Grove (1991) suggested to classify manifolds of positive sectional 
curvature with a large isometry group. 
The charm of this proposal is that 
everyone who starts to work on this problem is himself in charge 
of what 'large' means. 
One can relax the assumption if one gets new ideas. 
One potential hope could be that if one understands 
the obstructions for positively manifolds with a 'large' amount of symmetry, one 
may get an idea for a general obstruction.
However the main hope of Grove's program is that the process 
of relaxing the assumptions should lead
toward constructing new examples.

That this can be successful was demonstrated by the 
classification of simply connected homogeneous spaces of positive sectional 
curvature carried out by
Berger [1961], Wallach [1972], Aloff Wallach [1975] and Berard Bergery [1976]. 
The classification led to new examples in dimension $6$, $7$ and $12$, 
$13$ and $24$.
For the sake of completeness it should be said that the only other source of known 
positively curved examples are biquotients, i.e., quotients $\G/\!/\gH$, where 
$\G$ is a compact Lie group and $\gH$ is a subgroup of $\G\times \G$ acting 
freely on $\G$ from the left and the right. 
Eschenburg [1982] and Bazaikin [1996] 
found  infinite series of such examples in dimensions $7$ and $13$. 
We refer the reader to the survey of 
Wolfgang Ziller for more details.

Another motivation for Grove's 
proposal was the following theorem. 

\begin{thm}[Hsiang and Kleiner, 1989]\label{thm: HK}
Let $M^4$ be an orientable compact $4$-manifold of positive sectional curvature. 
Suppose that there is an isometric nontrivial action of $\gS^1$ on $M^4$.
Then $M^4$ is homeomorphic to $\Sph^4$ or $\CP^2$.
\end{thm}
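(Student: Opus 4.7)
The plan is to compute $\chi(M)$ via the fixed-point set $F:=\Fix(\gS^1)$ and apply Freedman's topological classification of simply connected compact $4$-manifolds. By Synge's theorem $M$ is simply connected; by Berger's classical theorem that every Killing field on an even-dimensional compact orientable positively curved manifold has a zero, $F\ne\emptyset$. Each component of $F$ is a closed totally geodesic submanifold of even codimension, hence of dimension $0$ or $2$ in our setting. A $2$-dimensional component $\Sigma$ is positively curved and orientable (its normal bundle inherits a complex structure from the $\gS^1$-rotation, and $M$ is orientable), so $\Sigma\cong\Sph^2$ by Gauss-Bonnet. Frankel's intersection theorem rules out two $2$-dimensional components, since at a common point both tangent planes would be $d\gS^1$-fixed, forcing a trivial slice representation on the full tangent space and contradicting the isolation of the components.

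The core step is the bound $\chi(M)=\chi(F)\le 3$. I would pass to the orbit space $A:=M/\gS^1$, a $3$-dimensional Alexandrov space of strictly positive curvature, and refine the $30$-angle counting of Hsiang--Kleiner used in the proof of Theorem~\ref{thm: kleiner}: isolated fixed points give interior singular points of $A$ whose spaces of directions are isometric to $\Sph^3/\gS^1$ and are controlled by Lemma~\ref{lem: trian}, while a $\Sph^2$ fixed component gives a distinguished $2$-dimensional boundary face. In any configuration contributing $\chi(F)\ge 4$, namely $\ge 4$ isolated fixed points or one $\Sph^2$ plus $\ge 2$ isolated fixed points, one selects $5$ auxiliary points in $A$ (the fixed points together with well-chosen points on $\Sigma$ when $\Sigma$ is present) and runs the angle counting: the Toponogov lower bound $\sum\alpha_{ijk}\ge 10\pi$ becomes \emph{strict} in strictly positive curvature, while the upper bound $\sum\alpha_{ijk}\le 10\pi$ from Lemma~\ref{lem: trian}(a) at each of the $5$ vertices remains in force, producing a contradiction.

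Given $\chi(M)\le 3$ and $\pi_1(M)=1$, Poincar\'e duality gives $\chi(M)=2+b_2(M)\in\{2,3\}$. If $b_2=0$, $M$ is a homotopy $4$-sphere and Freedman's classification yields $M$ homeomorphic to $\Sph^4$. If $b_2=1$, the intersection form on $H^2(M;\Z)$ is unimodular of rank $1$, hence $(\pm 1)$ and therefore odd; by Freedman's classification combined with smoothness of $M$ (which forces vanishing of the Kirby--Siebenmann invariant), $M$ is homeomorphic to $\CP^2$.

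The main obstacle is the strict bound $\chi(F)\le 3$: the same counting argument in nonnegative curvature yields only $\chi(F)\le 4$ (as in the proof of Theorem~\ref{thm: kleiner}), and the additional $\chi(F)=4$ configurations realized topologically by $\Sph^2\times\Sph^2$ and $\CP^2\#\pm\CP^2$ must be excluded using the strict form of Toponogov's comparison in strictly positive curvature, together with a careful choice of auxiliary points on any $\Sph^2$ component so that the $5$-point angle count can be invoked in the mixed configurations involving $\Sigma$.
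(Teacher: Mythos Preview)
Your overall plan—bound $\chi(M)=\chi(F)\le 3$ and then apply Freedman—is exactly the intended route, and the paper itself only gestures at this by calling the result ``a special case of Theorem~\ref{thm: kleiner}'' without spelling out how strict positivity sharpens $\chi\le 4$ to $\chi\le 3$. However, your proposed mechanism for that sharpening has two genuine gaps.

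\medskip

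\textbf{The four isolated fixed points case.} You insist on a five–point count with the bound $10\pi$, but when $\chi(F)=4$ is realized by exactly four isolated fixed points there is no admissible fifth vertex: a principal orbit point of $A$ has space of directions the round $\Sph^2$, where the $2\pi$ bound of Lemma~\ref{lem: trian}(a) is simply false. The correct adaptation is a \emph{four}-point count. Four points give four triangles and twelve angles. At each $p_i$ the three angles are pairwise distances of three directions in $\Sigma_{p_i}A\cong\Sph^3/\gS^1$, and the proof of Lemma~\ref{lem: trian} already contains the needed three-point inequality (a triangle in $\Sph^2(1/2)$ has perimeter $\le\pi$, pushed forward by the $1$-Lipschitz map); hence $\sum\alpha_{ijk}\le 4\pi$. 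For the lower bound, since $\Sigma_{p_i}A$ has diameter $\pi/2$ no $p_i$ can lie on a minimal geodesic between two others, so every triangle is nondegenerate, and in curvature $\ge\kappa>0$ a nondegenerate comparison triangle has angle sum $>\pi$. Thus $\sum\alpha_{ijk}>4\pi$, a contradiction.

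\medskip

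\textbf{The $\Sph^2$ plus two isolated points case.} Placing auxiliary vertices on $\Sigma=\partial A$ does not work: at a boundary point the circle acts on the normal slice with weights $(0,1)$, so the space of directions is a round hemisphere of $\Sph^2$ of diameter $\pi$, and four points on its equator already give total pairwise distance $4\pi$, destroying the upper bound of Lemma~\ref{lem: trian}(a). What actually rules this case out is a strict version of the boundary–distance argument in the proof of Theorem~\ref{thm: kleiner}. With curvature $>0$, the function $h=d(\partial A,\cdot)$ is \emph{strictly} concave along geodesics in the regular part of $A$. If $p_1,p_2$ are isolated fixed points with $r:=h(p_1)\le h(p_2)$, the convex set $h^{-1}([r,\infty[)$ has, by Lemma~\ref{lem: trian}(b), tangent cone at $p_1$ equal to the (at most one-point) set $\{w:\angle(w,v)\ge\pi/2\}$, where $v$ points toward $\partial A$; so either the set is $\{p_1\}$ or it is a geodesic arc issuing from $p_1$ in a direction $w$ with $\angle(w,v)=\pi/2$. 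In the latter case the first variation gives $(h\circ\gamma)'(0)=0$, and strict concavity then forces $h(\gamma(t))<r$ for all $t>0$, contradicting $\gamma\subset h^{-1}([r,\infty[)$. Hence at most one isolated fixed point can coexist with $\Sigma$, and $\chi(F)\le 3$.
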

The theorem is a special case of Theorem~\ref{thm: kleiner}.
Grove and Searle [1994] realized that 
the proof of the above theorem can be phrased naturally in terms of
Alexandrov geometry of the orbit space $M^4/\gS^1$. 
A careful analysis of the orbit space 
also allowed them  to establish 
the following result. 
\begin{thm}[Grove and Searle]\label{thm: GS}
Let $M^n$ be  an orientable compact Riemannian manifold of positive sectional curvature. 
Then
\[
\symrank(M,g):=\rank(\Iso(M,g))\le \bigl[\tfrac{n+1}{2}\bigr]
\]
and if equality holds, then $M$ is diffeomorphic to $\Sph^n$,
 $\CP^{n/2}$ or to a lens space.
\end{thm}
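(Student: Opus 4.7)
\emph{Plan.} I would split the argument into a rank bound coming from the slice representation at a small orbit, and a rigidity analysis of the orbit space $M/T$.

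\emph{Rank bound.} Let $T=T^k$ be a torus of maximum rank in $\Iso(M,g)$, and pick $p\in M$ minimizing $d:=\dim(T\cdot p)$. Since $T$ is abelian, the connected isotropy $T_p^\circ$ has rank $k-d$ and acts almost effectively by orthogonal transformations on the normal slice $\nu_p(T\cdot p)\cong\R^{n-d}$; as a maximal torus of $\Or(n-d)$ has rank $\lfloor(n-d)/2\rfloor$, this yields
\[
k - d \;\le\; \lfloor(n-d)/2\rfloor, \qquad\text{i.e.}\qquad k\;\le\; d + \lfloor(n-d)/2\rfloor.
\]
The right-hand side equals $\lfloor(n+1)/2\rfloor$ precisely when $d\in\{0,1\}$ and is strictly smaller for $d=0$ when $n$ is even, so it suffices to show $d\le 1$.

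\emph{Low-dimensional orbit.} For $n$ even I would invoke Berger's theorem: every Killing field on an even-dimensional positively curved manifold has a zero. Pick any circle $S^1\subset T$; its fixed-point set is a nonempty $T$-invariant totally geodesic positively curved submanifold of even codimension, and the quotient torus $T/S^1$ acts on each component. Induction on $k$ then produces a genuine fixed point, giving $d=0$. For $n$ odd, if some circle $S^1\subset T$ has nonempty fixed set, that set is $T$-invariant, totally geodesic, positively curved and of odd dimension $<n$, and one inducts with the $T^{k-1}$-action. If every nontrivial circle in $T$ acts freely of fixed points, the $T$-action is locally free; quotienting by a codimension-one subtorus gives a positively curved odd-dimensional Alexandrov space on which the remaining circle has an orbit of dimension $\le 1$, which pulls back to an orbit of dimension $\le 1$ in $M$. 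Either way $d\le 1$.

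\emph{Rigidity.} Assume now $k=\lfloor(n+1)/2\rfloor$. Then both inequalities above are saturated, so at a minimal-orbit point $p$ the slice representation of $T_p^\circ$ on $\R^{n-d}$ is, up to weights, the standard representation of a maximal torus in $\Or(n-d)$, in particular it has no trivial subrepresentation. The orbit space $B:=M/T$ is a positively curved Alexandrov space of dimension $\lfloor n/2\rfloor$ whose boundary strata encode the isotropy jumps of the action. I would analyse $B$ using Toponogov's theorem and the critical-point theory for distance functions from section~\ref{sec: sphere}, following the Grove--Shiohama philosophy, to show that $B$ is isometric to the orbit space of one of the three standard linear maximal-torus actions on $\Sph^n$, $\CP^{n/2}$, or a lens space $\Sph^n/\Gamma$. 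Lifting the identified stratification back via the isotropy data, and gluing disk bundles over codimension-one strata, I would then build a $T$-equivariant diffeomorphism from $M$ onto the corresponding model.

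\emph{Main obstacle.} The rank inequality itself is a straightforward slice computation once the low-orbit-dimension dichotomy is available; the real work is the rigidity step. The hardest piece is recovering the smooth type of $M$, not just its homotopy type, from the metric data on $B$: this requires Alexandrov-geometric comparison arguments sharper than the topological tools in the sphere theorems above, together with a careful tracking of how the codimension-one strata of $B$ reassemble into $M$. The odd-dimensional case is the most delicate, because nontrivial $\pi_1$ may appear and must match the fundamental group of a lens space compatibly with the lifted $T$-action.
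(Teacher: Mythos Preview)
Your rank bound argument is essentially correct for even $n$, but the odd-dimensional locally free case is mishandled. If every circle in $T^k$ acts without fixed points, then all $T$-orbits are $k$-dimensional, so there is no orbit of dimension $\le 1$ to pull back; quotienting by a codimension-one subtorus does nothing to change this. The correct repair is the opposite: quotient by a \emph{single} circle $S^1\subset T$ to obtain an $(n-1)$-dimensional, hence even-dimensional, positively curved Alexandrov space on which $T^{k-1}$ acts. Berger's theorem (in its orbifold/Alexandrov form) then produces a $T^{k-1}$-fixed point in the quotient, which lifts to a $T$-orbit of dimension $\le 1$ in $M$. This is what the paper means when it says the inequality is a simple consequence of Berger's theorem.

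Your rigidity strategy diverges substantially from the paper's, and the divergence is where the genuine gap lies. The paper does not analyse the full orbit space $M/T^k$; instead, Grove and Searle isolate a single circle $S^1\subset T$ whose fixed-point set has a component $N$ of \emph{codimension two}, and then study the distance function $d(N,\cdot)$ on $M$. They show that this function has no critical points on $M\setminus N$ except along a single $S^1$-orbit at which it attains its maximum, and this Morse-theoretic picture is what recovers the diffeomorphism type. The existence of the codimension-two fixed component is the crucial structural input in the equality case, and you neither identify it nor use it.

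Your alternative---reconstructing $M$ from the stratified Alexandrov geometry of $M/T^k$ and an equivariant gluing of disk bundles---is conceivable in principle, but it is a much heavier programme than what is actually needed, and your outline does not indicate how you would prove that $B$ is \emph{isometric} (not just homeomorphic) to a model orbit space, nor how the isotropy data along the strata would be pinned down. The codimension-two fixed set shortcut bypasses all of this: once $N$ and the single far orbit are in hand, $M$ decomposes as a disk bundle over $N$ glued to a tubular neighbourhood of one orbit, and the classification falls out directly.
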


The inequality is a simple consequence of a theorem of Berger 
saying that a Killing field on an even dimensional positively curved manifold 
has a zero. 
For the equality discussion Grove and Searle first show, that there in an
isometric $\gS^1$ action on $M$ such that the fixed point 
set has a component $N$ of codimension $2$. 
They then prove that the distance function $d(N,\cdot)$ 
has no critical points in $M\setminus N$ except for precisely one $\gS^1$-orbit 
where it attains its maximum. 
This is used to recover the structure of the manifold. 

Another result which essentially relies on the study 
of the orbit space is due to Rong [2002]. 
He showed that a simply connected positively curved $5$-manifold with symmetry rank 2 
is diffeomorphic to $\Sph^5$.

%
%


Recently, the author made the following basic observation, see [Wilking, 2003].
\begin{thm}[Connectedness Lemma]\label{thm1} \label{thm:connected} Let $M^{n}$ be a compact Riemannian manifold 
with positive sectional curvature. 
\begin{enumerate}
\item[a)] Suppose $N^{n-k}\subset M^{n}$ is a compact totally geodesic 
embedded submanifold. Then the inclusion map $N^{n-k}\rightarrow M^{n}$ 
is $n-2k+1$ connected. If there is a Lie group 
$\G$ that acts isometrically on $M^n$ and fixes $N^{n-k}$ pointwise, then the 
inclusion map is $n-2k+1+\delta(\G)$ connected where 
$\delta(\G)$ is the dimension of the principal orbit.
\item[b)] Suppose $N^{n-k_1}_1,N^{n-k_2}_2\subset M^{n}$ are 
two compact totally geodesic 
embedded submanifolds, $k_1\le k_2$, $k_1+k_2\le n$. Then 
the intersection $N^{n-k_1}_1\cap N^{n-k_2}_2$ is a totally geodesic 
embedded submanifold as well and the inclusion 
\[
N^{n-k_1}_1\cap N^{n-k_2}_2\rightarrow N^{n-k_2}_2
\] 
is $n-k_1-k_2$ connected.
\end{enumerate}
\end{thm}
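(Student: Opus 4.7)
The strategy is Morse theory of the energy functional on an appropriate path space, with a Synge-type index estimate exploiting positive curvature and total geodesy.

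For part (a), I would consider the path space $\Omega_N^N := \{\gamma: [0,1] \to M : \gamma(0), \gamma(1) \in N\}$ with its energy functional $E$. The submanifold of constant paths is canonically identified with $N$ and consists of absolute minima; the remaining critical points are non-trivial geodesics $\gamma:[0,L]\to M$ perpendicular to $N$ at both endpoints. The central estimate is that every such $\gamma$ has Morse index at least $n-2k+1$. Parallel transport $P_\gamma: T_pM\to T_qM$ preserves the $(n-1)$-dimensional hyperplane $\dot\gamma^\perp$ and, by perpendicularity, sends $T_pN$ into $\dot\gamma(L)^\perp$. Hence $P_\gamma(T_pN)$ and $T_qN$ are $(n-k)$-dimensional subspaces of the $(n-1)$-dimensional space $\dot\gamma(L)^\perp$, meeting in dimension at least $2(n-k)-(n-1)=n-2k+1$. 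For each $v$ in this intersection, the parallel field $V$ along $\gamma$ with $V(L)=v$ satisfies $V(0)\in T_pN$ and $V(L)\in T_qN$; since $N$ is totally geodesic the boundary terms in the second variation vanish, and
$$E''(V) \;=\; -\int_0^L K(V,\dot\gamma)\,|V|^2\,dt \;<\;0$$
by positive sectional curvature. This produces an $(n-2k+1)$-dimensional negative-definite subspace for the index form.

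By Morse theory on path spaces, $\Omega_N^N$ then has the homotopy type of $N$ with cells of dimension $\ge n-2k+1$ attached, so $N\hookrightarrow\Omega_N^N$ is $(n-2k)$-connected. The endpoint-evaluation fibration $\Omega_N^N\to N$, $\gamma\mapsto\gamma(1)$, has the constant-path inclusion as a section, which splits the long exact sequence and shows the fiber $\Omega(M;N,q_0)$ is $(n-2k)$-connected. Since $\Omega(M;N,q_0)$ is the homotopy fiber of $\iota$, this is equivalent to $\iota:N\hookrightarrow M$ being $(n-2k+1)$-connected. For the equivariant enhancement, Killing fields of $\G$ vanish on $N$ and hence restrict along any critical $\gamma$ to Jacobi fields vanishing at both endpoints. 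The $\G$-orbit $\G\cdot\gamma$ is therefore a $\delta(\G)$-dimensional critical submanifold, and the parallel-field variations above are transverse to the orbit directions; a Morse--Bott analysis then yields attached cells of dimension at least $n-2k+1+\delta(\G)$, giving the sharpened connectivity.

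Part (b) runs parallel with the path space $\Omega(M;N_1,N_2)$. The analogous intersection count shows that every non-trivial perpendicular critical geodesic has index at least $(n-k_1)+(n-k_2)-(n-1)=n-k_1-k_2+1$, while the minimum critical set is the submanifold of constant paths in $N_1\cap N_2$ (which is automatically totally geodesic as an intersection of totally geodesic submanifolds, and nonempty by Frankel's theorem --- the $0$-th order case of this very index bound). Thus $N_1\cap N_2\hookrightarrow\Omega(M;N_1,N_2)$ is $(n-k_1-k_2)$-connected. The endpoint-evaluation fibration $\Omega(M;N_1,N_2)\to N_2$ has fiber $\Omega(M;N_1,q_0)$, which by part (a) applied to $N_1\subset M$ is $(n-2k_1)$-connected; since $k_1\le k_2$ gives $n-2k_1\ge n-k_1-k_2$, the long exact sequence of this fibration propagates enough connectivity to conclude that $N_1\cap N_2\hookrightarrow N_2$ is $(n-k_1-k_2)$-connected. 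The Synge-type parallel-transport index estimate is the easy part; the delicate steps will be (i) the Morse translation from the index bound on non-trivial critical geodesics to the claimed connectivity of the submanifold inclusion, requiring careful use of the path-space fibration and its section; (ii) the Morse--Bott combinatorics in the equivariant case, where one must verify that the $\G$-orbit directions genuinely augment the cell dimension rather than only contributing to nullity; and (iii) in part (b), composing the two fibrations cleanly to extract the connectivity of $N_1\cap N_2\hookrightarrow N_2$.
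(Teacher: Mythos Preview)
Your plan coincides with the paper's approach: Morse theory of the energy on $\Omega(M;N,N)$ (resp.\ $\Omega(M;N_1,N_2)$), with a Synge-type index estimate via parallel transport. The parallel-field intersection count, the path-space fibration argument extracting the connectivity of $\iota\colon N\hookrightarrow M$ from that of $N\hookrightarrow\Omega(M;N,N)$, and your treatment of part~(b) are all correct and exactly in the intended spirit.

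The equivariant enhancement, however, has a real gap. In Morse--Bott theory a critical submanifold $C$ of normal index $\lambda$ contributes cells in dimensions $\lambda,\lambda+1,\ldots,\lambda+\dim C$; the \emph{minimum} attached cell has dimension $\lambda$, and it is this minimum that governs the connectivity of the sublevel inclusion. So the $\delta(\G)$-dimensional orbit $\G\cdot\gamma$ does not by itself raise the connectivity bound beyond $\lambda-1$. Your Killing-Jacobi fields $J_i=X_i|_\gamma$ vanish at both endpoints, and since $X_i|_N\equiv 0$ forces $\nabla_v X_i=0$ for $v\in T_pN$ and hence $\nabla_w X_i\perp T_pN$ for all $w$ by skew-symmetry, one checks $I(J_i,V)=0$ for every admissible $V$: the $J_i$ sit in the \emph{nullity} of the index form, orthogonal to the parallel-field block. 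Your flag~(ii) is exactly the right worry, and the Morse--Bott route does not resolve it. The paper's one-line sketch (``lower bounds on the indices of the nontrivial critical points'') is the hint: the $+\delta(\G)$ must enter through a sharper index bound, i.e.\ one must produce $\delta(\G)$ further variations with strictly negative second variation, independent of the parallel-field block, rather than relying on the orbit dimension in the cell count.
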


Theorem~\ref{thm1} turns out to be a very powerful tool in 
the analysis of group actions on positively curved manifolds. 
In fact by combining the theorem with 
the following lemma, one sees that a totally geodesic 
submanifold of low codimension in a positively curved manifold 
has immediate consequences for the cohomology ring of the manifold. 

\begin{lem}\label{lem}\label{lem: mainlem}\label{mainlem} Let $M^n$ be a
closed differentiable oriented manifold, and let 
$N^{n-k}$ be an embedded compact oriented submanifold without boundary.
Suppose the inclusion $ N^{n-k}\rightarrow M^n$ is $n-k-l$ 
connected and $n-k-2l>0$. Let $[N]\in H_{n-k}(M,\Z)$ be the 
image  of the fundamental class of $N$ in $H_*(M,\Z)$ and 
let $e\in H^k(M,\Z)$ be its Poincare dual. 
Then the homomorphism 
\[
\cup e\colon  H^i(M,\Z)\rightarrow H^{i+k}(M,\Z)
 \]
 is surjective for $l\le i< n-k-l$ 
 and  injective for $l< i\le n-k-l$.
\end{lem}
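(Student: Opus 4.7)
The plan is to factor cup product with $e$ through the inclusion $j\colon N\hookrightarrow M$ and then invoke Poincar\'e duality on $N$ together with the connectivity hypothesis. More precisely, if $\tau\in H^k(D\nu,S\nu;\Z)$ denotes the Thom class of the oriented normal bundle $\nu$ of $N$, then excision identifies $H^k(D\nu,S\nu)$ with $H^k(M,M\setminus N)$, and the image of $\tau$ under the natural map $H^k(M,M\setminus N)\to H^k(M,\Z)$ is precisely the Poincar\'e dual $e$ of $[N]$. Using the standard identity $\alpha\cap(e\cap[M])=\alpha\cap i_*[N]=i_*(j^*\alpha\cap[N])$, one obtains that, after Poincar\'e duality on $M$, the map $\cup e$ agrees with the composition
\[
H^i(M,\Z)\xrightarrow{\;j^*\;}H^i(N,\Z)\xrightarrow{\;\cap[N]\;}H_{n-k-i}(N,\Z)\xrightarrow{\;i_*\;}H_{n-k-i}(M,\Z),
\]
the middle arrow being the Poincar\'e duality isomorphism for the closed oriented manifold $N^{n-k}$.

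With this factorization in place, I would read off the conclusion from the connectivity hypothesis. Since $j\colon N\to M$ is $(n-k-l)$-connected, the relative Hurewicz argument (applied to the mapping cylinder pair) yields $H_j(M,N;\Z)=0$ for $j\le n-k-l$, so $i_*\colon H_j(N,\Z)\to H_j(M,\Z)$ is an isomorphism for $j<n-k-l$ and surjective for $j=n-k-l$; dually, $j^*\colon H^i(M,\Z)\to H^i(N,\Z)$ is an isomorphism for $i<n-k-l$ and injective for $i=n-k-l$. Substituting $j=n-k-i$ in the homological statement, $i_*$ is an isomorphism for $i>l$ and surjective for $i=l$. Intersecting the two ranges of validity with the hypothesis $n-k-2l>0$ (which guarantees $l<n-k-l$, so that the asserted ranges are non-empty and the two arrows in the factorization overlap), the composition is surjective for $l\le i<n-k-l$ and injective for $l<i\le n-k-l$, exactly as claimed.

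The main point that needs care is the compatibility of all three arrows in the factorization with integer coefficients and orientations: one must verify that the Thom class of $\nu$ really does restrict to $e$ in $H^k(M,\Z)$, that $\cap[N]$ is the full Poincar\'e duality isomorphism (hence no torsion is lost), and that the connectivity of $j$ gives the stated homological statements with $\Z$-coefficients and not just with field coefficients. This last issue is the only mildly subtle one, but it is handled by the standard relative Hurewicz theorem once one observes that $(n-k-l)$-connectedness forces $\pi_1(N)\to\pi_1(M)$ to be an isomorphism in the range where the lemma has content, so that the pair $(M,N)$ is simple and the passage from $\pi_*$ to $H_*(\,\cdot\,;\Z)$ goes through without change. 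Everything else is bookkeeping with Poincar\'e duality.
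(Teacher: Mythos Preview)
Your argument is correct and is in fact the standard route to this lemma. Note, however, that the paper is a survey and does not actually supply a proof of this statement; it merely records the lemma and refers implicitly to the original source. So there is no ``paper's proof'' to compare against here, only the original argument in [Wilking, 2003], which proceeds along exactly the lines you describe: factor $\cup\, e$ as $i_*\circ(\,\cdot\cap[N])\circ j^*$ via the projection formula, then read off surjectivity and injectivity from the connectivity hypothesis.

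One small comment: your caution about the passage from $\pi_*$ to $H_*(\,\cdot\,;\Z)$ is more than is needed. If the inclusion is $c$-connected with $c\ge 1$, then the pair $(M,N)$ (after replacing by the mapping cylinder) is $c$-connected in the sense that $\pi_j(M,N)=0$ for $j\le c$, and the relative Hurewicz theorem then gives $H_j(M,N;\Z)=0$ for $j\le c$ directly, with no simplicity hypothesis required. The $\pi_1$-action only enters when one wants to identify the \emph{first non-vanishing} relative homology group with the corresponding homotopy group, which you never need. So the ``mildly subtle'' point you flag is in fact not subtle at all, and the rest of your bookkeeping (the ranges $l\le i<n-k-l$ for surjectivity and $l<i\le n-k-l$ for injectivity) is exactly right.
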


Notice that in the case of a simply connected manifold 
$M$ the submanifold $N$ is simply connected as well and hence 
it is orientable.
Recall that the pull back of $e$ to $H^k(N,\Z)$ 
is the Euler class of the normal bundle of $N$ in $M$.

 Part b) of the Theorem~\ref{thm1} says in particular that 
 $N^{n-k_1}_1\cap N^{n-k_2}_2$ 
is not empty which is exactly the content of Frankel's Theorem. 
In fact similarly to Frankel's Theorem a Synge type 
argument is crucial in the proof of Theorem~\ref{thm1}. 
  The proof of Theorem~\ref{thm1} is a very simple 
Morse theory argument 
in the space of all curves from $N$ to $N$, respectively 
from $N_1$  to $N_2$. The critical points of the energy 
functional are geodesics starting and emanating perpendicularly 
to the submanifolds. Using the second variation formulas
 it is then easy to give
lower bounds on the indices of the nontrivial critical points.

The above result is the main new tool that is used in [Wilking, 2003]
 to show.

\begin{thm}\label{thm2}\label{thm: over4}\label{thm: overfour}
 Let $M^n$ be a simply connected 
$n$-dimensional manifold of positive sectional curvature, $n\ge 8$, 
and let 
$
d\ge 
\tfrac{n}{4}+1.
$ 
Suppose that there is an effective 
isometric action of a torus $\gT^d$ on $M^n$. 
Then $M$ is homotopically equivalent to 
$\CP^{n/2}$ or homeomorphic to $\HP^{n/4}$ or $\Sph^n$.
\end{thm}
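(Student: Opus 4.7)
The plan is to use the Connectedness Lemma (Theorem~\ref{thm:connected}) together with its cohomological companion (Lemma~\ref{lem: mainlem}) to squeeze the cohomology ring of $M$ into a form that matches $\Sph^n$, $\CP^{n/2}$, or $\HP^{n/4}$, and then to invoke standard topological classification results to upgrade from cohomology to homotopy/homeomorphism type.

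First I would produce a totally geodesic submanifold $N\subset M$ of small codimension that is fixed pointwise by a positive-dimensional subtorus. By Berger's theorem (any Killing field on an even-dimensional positively curved manifold has a zero), when $n$ is even the full torus $\gT^d$ has a fixed point $p$; when $n$ is odd one works inductively on a fixed point component of a suitable subtorus. Decompose $T_pM$ into weight spaces for the isotropy representation of $\gT^d$. There are at most $\lfloor n/2\rfloor$ nontrivial weights, each defining a codimension one subtorus (its kernel) in $\gT^d$. Because $d\ge n/4+1$, a pigeonhole argument on the intersection of these weight kernels produces a circle $\gS^1\subset \gT^d$ whose fixed point component $N\ni p$ satisfies $\codim_M(N)=2k\le n-2(d-1)\le n/2$; moreover a subtorus $\gT'$ of dimension $\delta\ge d-k$ fixes $N$ pointwise.

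Next I would apply Theorem~\ref{thm:connected}: the inclusion $N\hookrightarrow M$ is $(n-2k+1+\delta)$-connected. With $k\le n/4$, $\delta\ge 1$, and $n\ge 8$, this is high enough that $N$ is simply connected; its fundamental class $[N]\in H_{n-2k}(M,\Z)$ has a Poincar\'e dual $e\in H^{2k}(M,\Z)$, and Lemma~\ref{lem: mainlem} guarantees that $\cup e\colon H^i(M,\Z)\to H^{i+2k}(M,\Z)$ is an isomorphism in a wide range of degrees. The next move is to iterate: $N$ is itself a compact positively curved manifold on which the quotient torus $\gT^d/\gS^1$ acts (still of high rank relative to $\dim N$), and part~b) of the Connectedness Lemma lets me track the same small-codimension submanifold structure inside $N$. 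Assembling these cup-product isomorphisms across the tower of totally geodesic submanifolds should force the integer cohomology ring of $M$ to be singly generated by $e$, with only the values $2k\in\{2,4,n\}$ compatible with the dimension constraints and the Euler class arithmetic on nested normal bundles.

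Finally I would match the possible cohomology rings to topological types: $2k=n$ gives cohomology of $\Sph^n$, hence $M$ is homeomorphic to $\Sph^n$ by the generalized Poincar\'e conjecture in dimensions $\ge 5$; $2k=2$ gives $\Z[e]/(e^{n/2+1})$, which forces $M$ to be homotopy equivalent to $\CP^{n/2}$; and $2k=4$ gives $\Z[e]/(e^{n/4+1})$, which combined with the normal bundle analysis and the rank one classification of such cohomology rings upgrades to a homeomorphism with $\HP^{n/4}$.

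The main obstacle is the first step: locating a small-codimension totally geodesic submanifold fixed by a sufficiently large subtorus. The inequality $d\ge n/4+1$ is exactly the pigeonhole threshold one needs against the $\le n/2$ complex weights at a fixed point, but keeping track of the effective action in each iteration, and verifying that the codimension bounds propagate through successive totally geodesic subspaces, is delicate. The second technical hurdle is the ring-theoretic induction: one must exclude extra generators in $H^*(M,\Z)$ coming from the normal geometry of $N$, which is where part~b) of the Connectedness Lemma (Frankel-type intersection combined with connectivity estimates) does the essential work of collapsing everything onto a single generator $e$.
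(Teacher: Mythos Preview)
The paper does not actually supply a proof of this theorem. It is a survey; immediately before the statement it says only that ``the above result'' (the Connectedness Lemma, Theorem~\ref{thm:connected}, together with Lemma~\ref{lem: mainlem}) ``is the main new tool that is used in [Wilking, 2003] to show'' the theorem, and after the statement it records that the low-dimensional cases $n=8,9$ are due to Fang and Rong. So there is nothing in this paper to compare your argument against beyond the identification of Theorem~\ref{thm:connected} and Lemma~\ref{lem: mainlem} as the decisive ingredients.

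Your sketch is aligned with that: you locate a low-codimension totally geodesic fixed set via the isotropy weight decomposition at a torus fixed point, feed it into the Connectedness Lemma to get high connectivity, convert that into cohomological periodicity via Lemma~\ref{lem: mainlem}, and iterate. That is indeed the skeleton of the argument in [Wilking, 2003]. Two caveats are worth flagging. First, your bookkeeping on the subtorus fixing $N$ is off: if $\codim_M N = 2k$, then the subtorus $H\subset \gT^d$ fixing $N$ pointwise acts effectively on the $2k$-dimensional normal slice, so $\dim H \le k$, not $\ge d-k$; what has rank $\ge d-k$ is the \emph{quotient} torus acting effectively on $N$, which is what you need for the induction. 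Second, the passage from ``periodic integral cohomology'' to ``singly generated ring with period $2$, $4$, or $n$'' is where most of the work in the original paper lies; your outline acknowledges this but does not indicate how the induction rules out periods $2k\notin\{2,4,n\}$ or how one controls torsion. As a strategic outline your proposal is correct, but as written it remains a sketch rather than a proof.
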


In dimensions $8$ and $9$  
the theorem is due to Fang and Rong [2005].
Thus dimensions $6$, $7$ remain the only dimensions where 
one needs maximal symmetry rank assumptions for a classification.

If $M^n$ is an odd-dimensional manifold, that is not simply 
connected but satisfies all other assumptions of the theorem, 
then its fundamental group is cyclic, see Rong [2000].
 A conjecture of  Mann [1965] asserts that an exotic sphere $\Sigma^n$
can not support an 
effective smooth action of a $d$-dimensional torus with $d\ge
\tfrac{n}{4}+1$.

Notice that $\gF_4$, the isometry group of $\CaP$ has rank $4$. 
Thus in dimension $16$ the result is optimal. Similarly 
the isometry group of the $12$-dimensional 
Wallach flag has rank $3$. In dimension $13$ the Berger
space $\SU(5)/\gS^1\cdot \Symp(2)$ is an optimal 
counterexample.
%
%

%

%
%
%
%
There are three major constants to measure the amount of symmetry of
a Riemannian manifold $(M,g)$: 
\[
\symrank(M,g)=\rank\bigl(\Iso(M,g)\bigr),\]\[
\symdeg(M,g)=\dim\bigl(\Iso(M,g)\bigr)\]\[
\cohom(M,g)=\dim\bigl((M,g)/\Iso(M,g)\bigr). 
\]
So far we have mostly considered the first of these constants.

\begin{thm}[Wilking, 2006]\label{mainthm: symdeg} Let $(M^n,g)$ be a simply connected
 Riemannian manifold of 
positive sectional curvature. If $\symdeg(M^n,g)\ge 2n-6$,
then $(M,g)$ is tangentially homotopically equivalent to a rank 1 symmetric space
or isometric to a homogeneous space of positive sectional curvature.
\end{thm}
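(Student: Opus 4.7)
The plan is to analyze the compact connected identity component $\G := \Iso_0(M,g)$ acting isometrically on $M^n$, exploiting $\dim\G \ge 2n-6$ through orbit-structure constraints. If $H$ is a principal isotropy subgroup, then $\dim(\G/H)\le n$ forces $\dim H \ge n-6$; that is, \emph{the principal isotropy is almost full}. This is the quantitative input I would use repeatedly.

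First reduction: if the action is transitive, $M$ is a simply connected homogeneous space of positive sectional curvature, so the Berger--Wallach--Aloff--Wallach--B\'erard-Bergery classification directly yields an isometry to one of the known homogeneous examples, which is the second alternative of the theorem. Hence I may assume the cohomogeneity $k\ge 1$, and the task is to establish the tangential homotopy equivalence to a rank 1 symmetric space.

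Next, I would locate a nontrivial connected subgroup $\gK\subseteq \gH\subseteq \G$ (using the large isotropy) whose fixed-point component $F = \Fix(\gK,M)^0$ is a totally geodesic submanifold of small codimension in $M$. In even dimensions, Berger's theorem guarantees zeros of Killing fields, and dimension counting together with the isotropy representation at a singular orbit produces such a $\gK$; odd-dimensional cases require separately analyzing singular strata in the Alexandrov orbit space $M/\G$ as in Grove--Searle (Theorem~\ref{thm: GS}). The centralizer $C_\G(\gK)$ acts on $F$, providing a sizable effective symmetry group there, so the equivariant version of the Connectedness Lemma (Theorem~\ref{thm:connected}(a)) gives a connectedness bound boosted by $\delta(C_\G(\gK))$. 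Iterating part (b) with two such fixed-point submanifolds $F_1,F_2$ of small codimension then transfers the large symmetry down to intersections, permitting an inductive cohomological analysis.

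Feeding these highly connected inclusions into Lemma~\ref{mainlem}, the Poincar\'e-dual class $e\in H^k(M,\Z)$ generates $H^*(M,\Z)$ by successive cup-products over a wide range of degrees, so $M$ has the integral cohomology ring of a rank 1 symmetric space. To upgrade from cohomological to \emph{tangential} homotopy equivalence, I would use that $F$ is itself positively curved with symmetry degree controlled from below, so induction on $\dim M$ identifies $F$ tangentially with a lower-dimensional rank 1 symmetric space; the normal representation of $C_\G(\gK)$ on $\nu F$ then pins down the normal bundle's characteristic classes, which together with those of $F$ recover the stable tangent bundle of $M$. Where the torus rank inside $\G$ happens to reach $n/4+1$, I would short-circuit the argument via Theorem~\ref{thm: overfour}.

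The main obstacle is the case analysis needed to produce the submanifold $F$ uniformly: the bound $\dim\G\ge 2n-6$ is only moderately strong, so in small dimensions there are many candidate compact Lie groups $\G$ (classical and exceptional) and many possible isotropy representations to exclude, each requiring separate arguments combining Berger's Killing-field theorem, Frankel-type intersection results from Theorem~\ref{thm:connected}(b), and the Grove--Searle rigidity of Theorem~\ref{thm: GS}. Likewise, the final bootstrap from cohomology to tangential homotopy type is delicate when characteristic classes of $\nu F$ are not forced by the representation alone, and this is the step where the most care is needed.
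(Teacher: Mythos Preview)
The paper does not actually contain a proof of this theorem. This is a survey article, and Theorem~\ref{mainthm: symdeg} is merely stated with attribution to [Wilking, 2006]; the only commentary that follows is the remark that all homogeneous positively curved spaces satisfy the hypothesis and that the bound is sharp in dimension~$7$ because of the Eschenburg spaces. So there is no ``paper's own proof'' against which to compare your proposal.

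That said, your outline is broadly consistent with the toolkit the survey assembles in this section: the Connectedness Lemma (Theorem~\ref{thm:connected}) and its cohomological companion Lemma~\ref{mainlem}, Grove--Searle rigidity, and the torus-rank result Theorem~\ref{thm: overfour} are indeed the ingredients one expects. Your reduction via $\dim H\ge n-6$ for the principal isotropy and the search for fixed-point components of low codimension are the right starting moves. However, your text is explicitly a plan rather than a proof: the production of the submanifold $F$ ``uniformly'' across all admissible pairs $(\G,H)$, and the passage from integral cohomology to a \emph{tangential} homotopy equivalence, are precisely the places where the actual argument in [Wilking, 2006] is long and case-intensive, and you flag these yourself as obstacles without resolving them. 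As written, this is a credible strategy sketch, not a proof; and since the survey gives no proof either, there is nothing further to compare.
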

Notice that
 all homogeneous spaces of positive sectional curvature 
 satisfy the assumptions of the theorem. 
In dimension $7$ the theorem gives the optimal bound 
as there are positively curved Eschenburg space $\SU(3)/\!/\gS^1$ 
with a seven dimensional isometry group. 


Finally we consider the cohomogeneity of a Riemannian manifold.

\begin{thm}[Wilking, 2006]\label{thm}\label{thm: cohom}\label{mainthm: cohom} Let $k$ be a positive integer. 
In dimensions above $18(k+1)^2$ 
each simply connected Riemannian manifold $M^n$ of cohomogeneity 
$k\ge 1$
 with positive sectional curvature is tangentially homotopically equivalent 
to a rank one symmetric space. 
\end{thm}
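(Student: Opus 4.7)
My plan is to convert the Lie-theoretic input ``cohomogeneity $k$'' into a long tower of totally geodesic submanifolds of tightly controlled codimension, and then feed that tower into the Connectedness Lemma (Theorem~\ref{thm:connected}) together with Lemma~\ref{mainlem} to force $H^*(M;\Z)$ to be generated by a single element. Let $\G$ denote the identity component of $\Iso(M,g)$. Cohomogeneity $k$ forces $\dim\G\ge n-k$, and since a compact connected Lie group of rank $r$ has dimension at most $2r^2+r$, I get $\rank(\G)\ge\sqrt{(n-k)/2}$, which for $n>18(k+1)^2$ comfortably exceeds $3(k+1)$. In particular $\G$ contains a maximal torus $\T$ of large rank.

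The first substantive step would be to produce the filtration. By Berger's theorem, in even dimension every Killing field has a zero, so $\T$ has a fixed point $p$; in odd dimension an auxiliary reduction to the fixed set of a circle subgroup (itself of even codimension) makes the same statement available. The isotropy representation $\T\to\Or(T_pM)$ decomposes into weight planes, and I would pick a descending chain of subtori $\T\supset\T_1\supset\cdots\supset\T_s=\{e\}$ by shutting off generic weights one at a time. This produces a tower
\[
\{p\}=F_s\subset F_{s-1}\subset\cdots\subset F_0=M
\]
of totally geodesic submanifolds, each $F_{i+1}$ a connected component of the $\T_i$-fixed set inside $F_i$. The cohomogeneity hypothesis controls how much codimension can jump between consecutive levels: the action of $N_\G(\T_i)/\T_i$ on $F_i$ has orbit space of dimension at most $k$, and a slice representation argument forces $\dim F_i-\dim F_{i+1}\le 2(k+1)$.

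With the filtration in hand I would then run the Connectedness Lemma iteratively. For each pair $F_{i+1}\subset F_i$ the codimension is at most $2(k+1)$, so Theorem~\ref{thm:connected}(a) makes the inclusion at least $\dim F_i-4(k+1)+1$ connected, and the $\delta(\G)$-enhancement buys extra connectivity whenever a subgroup of $\G$ fixes $F_{i+1}$. Lemma~\ref{mainlem} then yields cup-product isomorphisms $\cup e_i\colon H^*(F_i;\Z)\to H^{*+\codim}(F_i;\Z)$ for the Poincar\'e dual $e_i$ of $F_{i+1}$, in a wide range of degrees. Chaining these through all the steps of the tower forces $H^*(M;\Z)$ to be generated by a single element of degree $\ell\in\{2,4,8,n\}$, so $M$ has the integral cohomology ring of $\Sph^n$, $\CP^{n/2}$, $\HP^{n/4}$, or $\CaP$. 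A parallel computation with Pontryagin and Stiefel-Whitney classes of the normal bundles in the tower, which are determined by the $\T$-weights and pulled up to $M$ through the highly connected inclusions, should upgrade the homotopy equivalence to a tangential one.

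The main obstacle I foresee is the codimension control in the filtration: ruling out weight configurations on $T_pM$ in which successive subtori fix wildly different-dimensional submanifolds. This is also where the quadratic bound $n>18(k+1)^2$ is tight, since one needs enough room so that, after summing rank, codimension, and slice losses, every inclusion in the tower stays more than half-connected---the hypothesis that Lemma~\ref{mainlem} requires to force a one-generator cohomology ring. A secondary subtlety is securing tangential rather than merely homotopic equivalence; I expect this to follow because all bundles involved are associated to torus representations whose weights are rigidly constrained by positive curvature, but matching them against the model rank-one symmetric space will require careful bookkeeping of characteristic classes.
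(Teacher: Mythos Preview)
Your approach diverges from the paper's and contains a quantitative gap that blocks it as written. The paper does \emph{not} descend through fixed-point sets inside $M$; it goes the other way. It constructs an \emph{ascending} chain $M=M_0\subset M_1\subset\cdots$ of positively curved cohomogeneity-$k$ manifolds with $\dim M_i=n+ih$ for a fixed $h\le 4(k+1)$, every inclusion totally geodesic. Setting $M_\infty=\bigcup_i M_i$, the Connectedness Lemma makes $H^*(M_\infty;\Z)$ $h$-periodic, while a separate argument using the Alexandrov geometry of the common orbit space shows that $M_\infty$ has the homotopy type of the classifying space $B\gH$ of some compact Lie group $\gH$. These two constraints together force $M_\infty$ to be contractible, $\CP^\infty$, or $\HP^\infty$, and the Connectedness Lemma transfers this back to $M$. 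The classifying-space identification is the mechanism that singles out the periods $2$ and $4$ (or the sphere case); your outline has no substitute for it.

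The concrete gap in your plan is that the descending chain cannot reach a point. Your own estimate gives $\rank(\T)$ on the order of $\sqrt{n}$; for $n$ just above $18(k+1)^2$ this is roughly $3(k+1)$ (and your inequality $\dim\G\le 2r^2+r$ is in fact violated by $E_8$-factors, so the true bound on the rank is even weaker). A chain of subtori therefore has at most about $3(k+1)$ steps. If each step drops the dimension by at most $2(k+1)$ as you claim, the total drop is at most about $6(k+1)^2$, so $F_s$ still has dimension at least $n-6(k+1)^2>12(k+1)^2$, nowhere near a point. Without a known bottom for the tower you cannot start the bootstrap that would force $H^*(M;\Z)$ to be singly generated; periodicity with some period $h\le 2(k+1)$ does not by itself pick out $h\in\{2,4\}$. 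This is exactly why the paper embeds $M$ \emph{upward} into an infinite tower rather than cutting it down: the limit $M_\infty$ is a new object whose global homotopy type one can independently pin down.
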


The proof of Theorem~\ref{thm: cohom} actually establishes  the 
existence of an infinite sequence of (connected) Riemannian manifolds 
\[
M=M_0\subset M_1\subset\cdots 
\] 
such that $\dim(M_i)=n+ih$, where $h\le 4k+4$ is a positive 
integer that depends on $M$. All inclusions are totally geodesic,
all manifolds are   of cohomogeneity 
$k$ and all have positive sectional curvature. 
One then considers $M_\infty:=\bigcup M_i$. 
On the one hand one can use the connectedness lemma 
to show that $M_{\infty}$ has $h$-periodic integral cohomology ring. 
On the other hand,  using Alexandrov 
geometry of the orbit space, one can show that $M_{\infty}$, 
has the homotopy type of the classifying space of a compact Lie group.
The results combined show that $M_{\infty}$ is either contractible
or has the homotopy type of $\CP^\infty$ or $\HP^\infty$. 
The connectedness lemma then implies that $M$ has the corresponding
homotopy type. The details are quite involved and we refer the 
reader to [Wilking, 2006].

Of course one might hope that for small $k$ one can use 
similar techniques to get a classification in all 
dimensions, or at least a classification 
up to some potential candidates for positively curved manifolds.

The following theorem carries out such a program in the case of $k=1$.

\begin{thm}[Verdiani, Grove, Wilking, Ziller]
 Let $M^{n}$ be a simply connected  compact
 Riemannian manifold of positive sectional curvature.
Suppose that a connected Lie group $\G$ acts by isometries with cohomogeneity one ,i.e.,
 the orbit space $M^{n}/\G$ is one dimensional.
Then one of the following holds:
\begin{enumerate} 
\item[$\bullet$] $M^{n}$ is equivariantly diffeomorphic to one 
of the known positively curved biquotients endowed 
with a natural  cohomogeneity action. 
\item[$\bullet$] $n=7$ and $M$ is the two fold cover 
of a $3$-Sasakian manifold that corresponds to one of the 
self dual Einstein $4$-orbifolds of cohomogeneity one 
that were found by Hitchin.
\item[$\bullet$] $n=7$ and $M$ is equivariantly diffeomorphic 
to one particular cohomogeneity one manifold. 
\end{enumerate}
\end{thm}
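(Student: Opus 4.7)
The plan is to classify the group diagrams $\gH \subset \{\gK^-,\gK^+\} \subset \G$ that could conceivably give rise to a simply connected cohomogeneity one manifold of positive curvature, and then match each surviving diagram against the list in the theorem. After passing to a finite cover and enlarging $\G$ to a maximal connected subgroup of $\Iso(M,g)^0$, I may assume $\G$ is compact connected and acts almost effectively with cohomogeneity one; the orbit space is then either a circle (which is ruled out in positive curvature since it forces a line in the universal cover by Cheeger--Gromoll) or a closed interval $[0,L]$, in which case the two singular isotropies $\gK^\pm$ and the principal isotropy $\gH$ satisfy $\gK^\pm/\gH = \Sph^{\ell_\pm}$, and $M$ is reconstructed from the diagram by gluing two disc bundles as in Grove--Ziller.

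The first major step is to bound $\G$. By Grove--Searle (Theorem~\ref{thm: GS}) the rank of $\G$ is at most $[(n+1)/2]$. The cohomogeneity one condition $\dim\G - \dim\gH = n-1$ together with the constraint that $\gK^\pm/\gH$ are spheres gives very restrictive numerics on $(\dim\G,\dim\gH,\ell_\pm)$. Using Berger's result that any Killing field on an even dimensional positively curved manifold has a zero, and using Synge's theorem applied to the non-principal orbits $\G/\gK^\pm$ (which are totally geodesic), one shows that low-dimensional diagrams can be enumerated and that in each dimension $\G$ must be one of a short list of simple or semisimple compact groups. The connectedness lemma (Theorem~\ref{thm:connected}) applied to the totally geodesic singular orbits then forces strong periodicity of the cohomology of $M$, which interacts with the fibration $\gK^\pm/\gH \to \G/\gH \to \G/\gK^\pm$ to rule out large families of numerically admissible diagrams.

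The second major step is, for each surviving $\G$, to enumerate the possible embeddings $\gH \hookrightarrow \gK^\pm \hookrightarrow \G$ up to equivalence. Here one uses the primitivity dichotomy (either $\gH$ is contained in a proper intermediate subgroup of $\G$, or not), the classification of transitive actions on spheres (Borel--Montgomery--Samelson) applied to $\gK^\pm/\gH$, and the linear isotropy representation of $\gK^\pm$ on the slice to further cut down on candidates. In even dimensions and in dimensions $n\neq 7$ this case analysis terminates with only the diagrams of the known rank one symmetric spaces, the Wallach flags, the Aloff--Wallach spaces, the Eschenburg and Bazaikin biquotients, each endowed with its standard cohomogeneity one structure, yielding the first bullet. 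In dimension $7$ two additional families of diagrams survive the numerical and connectedness obstructions: one infinite family whose quotients match the $3$-Sasakian $7$-manifolds over Hitchin's self-dual Einstein $4$-orbifolds of cohomogeneity one, and one sporadic diagram, giving the second and third bullets.

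The hard part, and where the bulk of the argument lives, is the obstruction half of this last step: showing that the remaining numerically admissible $7$-dimensional diagrams (and a handful of exceptional diagrams in other low dimensions involving $\SU(3)$, $\Sp(2)$, $\G_2$, $\Spin(7)$, and products with a circle factor) cannot carry any invariant metric of positive sectional curvature. This requires detailed curvature computations at the two singular orbits using the explicit form of the invariant metric there, combined with the horizontal curvature equation along a normal geodesic, and in several cases requires ruling out zero curvature planes forced by the slice representation. I expect that this obstruction analysis, rather than the enumeration or the identification of the surviving diagrams with the listed examples, is the principal technical obstacle.
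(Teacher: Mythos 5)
The paper you are reading does not actually prove this theorem: it states it, records the Verdiani/Grove--Wilking--Ziller attribution, remarks that ``the proof uses a lot of the techniques that we have mentioned above,'' and refers to Ziller's survey and the original papers. So there is no in-paper argument to compare against; what can be checked is whether your outline is consistent with the structure of the actual Grove--Wilking--Ziller proof. In broad strokes it is: reduce to an interval orbit space, encode $M$ by a group diagram $\gH\subset\{\gK^-,\gK^+\}\subset\G$, bound $\rank\G$ via Grove--Searle, use Borel's classification of transitive sphere actions on $\gK^\pm/\gH$ together with the slice representation and primitivity to enumerate admissible diagrams, then eliminate most of them by curvature obstructions, and match the survivors to the known biquotients plus the two exceptional $7$-dimensional families. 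That is indeed the skeleton of the argument, and the remark that the obstruction side carries the bulk of the difficulty is fair.

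There is, however, a genuine error that propagates through the middle of your outline: the non-principal orbits $\G/\gK^\pm$ of a cohomogeneity one manifold are \emph{not} in general totally geodesic. They are automatically \emph{minimal} (the mean curvature vector at a singular point is $\gK^\pm$-invariant and the slice representation has no nonzero fixed vectors, so it vanishes), but the full second fundamental form is a $\gK^\pm$-equivariant map $\Sym^2(T_pB_\pm)\to\nu_p B_\pm$, and depending on the isotropy and slice representations there can be such nonzero equivariant tensors. Consequently you cannot directly apply Synge/Frankel-type second-variation arguments, nor the Connectedness Lemma (Theorem~\ref{thm:connected}), to the singular orbits themselves the way you propose. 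What the actual proof does instead is apply these tools to genuinely totally geodesic submanifolds arising from the group action --- fixed point components of involutions, circles, and tori inside $\G$ --- and then combine that with the Weyl group/core structure of the cohomogeneity one geodesic, the Isotropy Lemma, and the Chain and Block Theorems, which constrain the diagram $\gH\subset\gK^\pm\subset\G$ Lie-theoretically without ever needing $B_\pm$ itself to be totally geodesic. Your structural reduction to simple (or nearly simple) $\G$ is also under-described: getting from ``compact connected $\G$ of bounded rank'' to a short explicit list of groups is itself a substantial part of the GWZ argument, not a quick consequence of Grove--Searle plus the Killing field lemma.
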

In even dimensions the theorem is due to Verdiani [2001], 
in this case only rank 1 symmetric spaces occur. 
The odd dimensional case is more involved and is due to 
Grove, Wilking and Ziller [2006]. This is partly due to the fact 
that in dimensions $7$ and $13$, 
there are infinitely many positively curved biquotients 
of cohomogeneity one.

It remains open whether the last two cases can indeed occur. 
The proof of the theorem uses a lot of the techniques that we
have mentioned above. We refer to the survey of Ziller 
for a more detailed discussion.
Very different results on positively curved manifolds with 
symmetry were found by Dessai [2005].

\begin{thm} Suppose $(M,g)$ is a positively curved spin manifold of 
dimension $\ge 12$. Let $\G$ be a connected Lie group acting smoothly 
and suppose a subgroup $\Z_2^2\subset \G$ acts by isometries. 
Then the characteristic number $\hat{A}(M,TM)$ vanishes. 
\end{thm}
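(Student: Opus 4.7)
The plan is to realize $\hat A(M,TM)$ as an equivariant Dirac index and extract its vanishing from the combination of $\Z_2^2$-symmetry and positive-curvature constraints on the involutions' fixed sets. First I would lift the $\Z_2^2$ action to the spin principal bundle of $M$. The obstruction lies in $H^2(\Z_2^2;\Z_2)$, so after possibly passing to a central $\Z_2$-extension $\tilde\Gamma$ of order $8$, the lifted action acts on the spinor bundle twisted with $TM$, and each involution $\sigma\in \Z_2^2$ acquires a well-defined equivariant index $\hat A_\sigma(M,TM)$ computable by the Atiyah-Bott-Segal-Singer fixed point formula as a sum of characteristic numbers of the components $F\subset \Fix(\sigma)$ and their normal bundles.

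Next I would exploit that $\G$ is connected: each involution $\sigma\in \Z_2^2$ sits on a circle $S^1_\sigma\subset \G$ acting isometrically on $M$. By rigidity of elliptic-type genera on spin manifolds (Witten-Bott-Taubes), suitable $\Z$-linear combinations of twisted Dirac indices $\hat A(M,\Sym^* TM)$ — one of whose coefficients is $\hat A(M,TM)$ — are constant under $S^1_\sigma$. Comparing the values at $\sigma$ and at the identity converts the twisted index into an identity between $\hat A(M,TM)$ (and its companions $\hat A(M)$, which already vanishes by Atiyah-Hirzebruch) and a localized sum over $\Fix(\sigma)$.

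The positive-curvature hypothesis now enters through Theorem~\ref{thm:connected}: every component $F\subset \Fix(\sigma_i)$ is totally geodesic, and the inclusion $F\hookrightarrow M$ is $n-2k+1+\delta(\G)$-connected with $k=\codim F$. For $\dim M\ge 12$ this forces the codimensions of $F$ to be small and $F$ to inherit most of the cohomology of $M$. Intersecting two involutions gives a $\Z_2^2$-fixed totally geodesic submanifold on which one can iterate the localization, and the connectedness constraints produce enough rigid linear relations among the localized contributions from $\sigma_1$, $\sigma_2$ and $\sigma_1\sigma_2$ to conclude $\hat A(M,TM)=0$.

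The main obstacle will be the final combinatorial step. A single $S^1$-action on a spin manifold kills $\hat A(M)$ (Atiyah-Hirzebruch), but it does not kill $\hat A(M,TM)$; evidence of this is that $\hat A(\cdot, TM)$ is nonzero on some homogeneous spaces. The full $\Z_2^2$-structure is therefore essential, and the delicate part is verifying that the two independent involution identities, once fed the codimension bounds from the connectedness lemma and the dimension hypothesis $n\ge 12$, actually force (rather than merely constrain) the vanishing. Alongside this, correctly tracking sign contributions arising from a possibly nontrivial lift of $\Z_2^2$ to the spin cover, and bookkeeping characters in the Atiyah-Bott-Segal-Singer formula, will be the technically heaviest piece of the argument.
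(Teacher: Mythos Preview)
Your overall strategy---localize a twisted Dirac index to fixed sets of the involutions and combine with positive-curvature geometry of those fixed sets---points in the right direction, but there is a concrete gap, and the route you sketch is considerably heavier than what is actually needed.

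The gap is your use of the connectedness lemma. You write that Theorem~\ref{thm:connected} ``forces the codimensions of $F$ to be small.'' It does not: the connectedness lemma says that \emph{if} $\codim F$ is small \emph{then} the inclusion $F\hookrightarrow M$ is highly connected; it gives no a priori upper bound on $\codim F$. In your outline the codimension bound is precisely what is needed to make the localization identities bite, and you have not produced it. As written, the argument does not close.

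The paper's proof (Dessai's) is much shorter and avoids all of the machinery you invoke. The index-theoretic input is a single citable vanishing theorem of Hirzebruch and Slodowy: on a spin manifold with the given symmetry, if $\hat A(M,TM)\neq 0$ then each of the three nontrivial involutions in $\Z_2^2$ has a fixed-point component of codimension~$4$. No Bott--Taubes rigidity, no spin-lift bookkeeping, no character tracking is required beyond quoting this result. The positive-curvature input is only Frankel's theorem (the weakest case of Theorem~\ref{thm:connected}b, namely that the intersection is nonempty): the codimension-$4$ components of two of the involutions meet, and any common point is automatically fixed by the whole $\Z_2^2$. The contradiction then comes from an elementary inspection of the $\Z_2^2$ isotropy representation on $T_pM$ at such a point---not from the ``delicate combinatorial step'' you anticipate.

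In short: the step you identified as the main obstacle evaporates once the two heavy tools (elliptic-genus rigidity, full connectedness lemma) are replaced by the two light ones actually used (Hirzebruch--Slodowy, Frankel), and the codimension bound you tried to extract from geometry in fact comes from the index theory.
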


The proof is a clever combination of Frankel's theorem 
on the intersection of totally geodesic submanifolds and a vanishing 
theorem of Hirzebruch and Slodowy. 
The non-vanishing of $\hat{A}(M,TM)$  would by that result 
ensure that each of the 
three
involutions in $\Z_2^2$ has a fixed point set of codimension $4$. 
By Frankel these three components have a common intersection
and the contradiction arises by inspecting the isotropy representation 
of $\Z_2^2$ at a fixed point.

In the presence of stronger symmetry assumptions he can show the vanishing 
of more characteristic numbers. These numbers occur naturally 
as coefficients of a power series expanding the elliptic genus.

\subsection{Manifolds with positive sectional curvature almost everywhere.}

As mentioned before there are relatively 
few known examples of positively curved manifolds. 
The lists of examples is quite bit longer 
if one includes manifolds which have 
positive sectional curvature on an open dense set. 

The most interesting example in the class is the 
Gromoll Meyer sphere. Gromoll and Meyer [1974]
considered the subgroup $\gH\subset \Symp(2)\times \Symp(2)$  
given by \[
\gH:=\bigl\{\bigl(\diag(1,q),\diag(q,q)\bigr)\bigm| q\in\gS^3\bigr\}
\]
and the induced free two sided action of $\gH$ on $\Symp(2)$. 
They showed that the corresponding biquotient $\Sigma^7:=\Symp(2)/\!/\gH$ 
is diffeomorphic to an exotic sphere. 

Furthermore, by the O'Neill formulas 
the metric on $\Symp(2)/\!/\gH$ induced by the biinvariant metric $g$ 
on $\Symp(2)$ has nonnegative sectional curvature. 
In fact it is not hard to see that there is a point $p\in \Sigma^7$ 
such that all planes based at $p$ have positive curvature. 

It was shown later by Wilhelm [1996] that there is a left invariant metric 
on $\Symp(2)$ such that the induced metric on $\Sigma^7$ 
has positive sectional curvature on an open dense set of points.
Gromoll and Meyer mention in their paper the so called  
deformation conjecture:

\begin{prob}(Deformation conjecture) Let $M$ be a 
complete nonnegatively curved manifold  for which there is point $p\in M$ such that 
all planes  based at $p$ have positive sectional curvature. 
Does $(M,g)$ admit a positively curved metric, as well? 
\end{prob} 

In the case of an open manifold $M$  the conjecture is by Perelman's 
solution of the soul conjecture valid. 
However in general counterexamples were found in [Wilking, 2002].

\begin{thm}\label{thm: almost pos} The projective tangent bundles 
$P_{\R}T\RP^n$, $P_{\C}T\CP^n$ and $P_{\fH}T\HP^n$ 
of the projective spaces admit metrics with positive sectional curvature
on open dense sets.
\end{thm}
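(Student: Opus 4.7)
The plan is to realize each projective tangent bundle as a homogeneous flag variety, equip it with the normal homogeneous metric coming from a biinvariant metric on a compact Lie group, and then apply one or two Cheeger deformations with carefully chosen subgroups to cut down the zero-curvature locus to a nowhere dense set. Write $G(k,\fK)\in\{\Or(k),\U(k),\Symp(k)\}$ for $\fK=\R,\C,\fH$. The first step is to identify
\[
P_{\fK}T\fK P^n \;\cong\; G(n+1,\fK)\big/\bigl(G(1,\fK)\times G(1,\fK)\times G(n-1,\fK)\bigr),
\]
the partial flag manifold of pairs $L_1\subset L_2\subset \fK^{n+1}$ of $\fK$-subspaces of $\fK$-dimensions $1$ and $2$, with isotropy $H$ as displayed. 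When $n=2$ these are the three Wallach flag manifolds, which already carry homogeneous positively curved metrics; the content of the theorem lies in $n\ge 3$, where no $G$-invariant metric is positively curved.

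Equip $G:=G(n+1,\fK)$ with a biinvariant metric and $G/H$ with the induced normal homogeneous metric $g_0$. The standard formula gives, for an orthonormal pair $X,Y\in\Lm$ (where $\Lg=\Lh\oplus\Lm$ is reductive),
\[
K_{g_0}(X,Y) \;=\; \tfrac{1}{4}\|[X,Y]_\Lm\|^2 \;+\; \|[X,Y]_\Lh\|^2,
\]
so $K_{g_0}(X,Y)=0$ iff $[X,Y]=0$ in $\Lg$. The second step is to pick a subgroup $\gK\subset G$, say the block subgroup $\gK:=G(n,\fK)$ fixing the first coordinate axis, and perform a Cheeger deformation: endow $G\times \gK$ with $g_0+\tfrac{1}{t}Q$ (with $Q$ biinvariant on $\gK$) and quotient by the diagonal $\gK$-action. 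The resulting family $g_t$ is still $\gK\times H$-invariant and, by the formulas of M\"uter, satisfies $K_{g_t}\ge K_{g_0}\ge 0$ everywhere, while
\[
K_{g_t}(X,Y)\;=\;0 \;\Longleftrightarrow\; [X,Y]=0 \;\text{ and }\; [X_\Lk,Y_\Lk]=0,
\]
where $X_\Lk,Y_\Lk$ denote the $\Lk$-components (after identifying $T_{eH}(G/H)$ with the $\Lk$-action fundamental vectors plus $\Lm\cap \Lk^\perp$). If a single Cheeger deformation does not suffice, I would iterate with a second subgroup $\gK'$ (say the block $G(n,\fK)$ fixing a different coordinate), combining the constraints into $[X_{\Lk'},Y_{\Lk'}]=0$ as well.

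The third step is to show that the combined algebraic conditions force $\sigma=\spann(X,Y)$ into a proper Zariski-closed subvariety $Z\subset \Gr_2(T_{eH}(G/H))$. Since the (iterated) deformed metric is still invariant under a subgroup acting transitively on $G/H$ — namely $G$ itself, if one performs the Cheeger deformation equivariantly with respect to the left $G$-action — the locus of points admitting some zero-curvature plane is the $G$-saturation of (the image of) $Z$, and so is a proper real-algebraic subvariety of $P_\fK T\fK P^n$, hence nowhere dense. Openness of the complementary set is automatic from continuity of sectional curvature, and the points outside this subvariety then form the required open dense subset on which $K>0$.

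The main obstacle is the third step: verifying that the algebraic system $[X,Y]=0$, $[X_\Lk,Y_\Lk]=0$ (and possibly $[X_{\Lk'},Y_{\Lk'}]=0$) is non-degenerate enough to carve out a proper subvariety in $\Gr_2$. Abelian subalgebras of $\Lg$ meeting $\Lm$ in a $2$-plane — in particular $2$-planes inside a maximal torus — automatically violate no bracket relation, so one must choose the $\gK$ (and if necessary $\gK'$) so that projection to $\Lk$ detects every abelian $2$-plane in $\Lm$, i.e., so that no abelian $2$-plane of $\Lg$ lies simultaneously in the centralizers $C_\Lg(\Lk)\cap C_\Lg(\Lk')\cap\Lm$. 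For the block subgroups $G(n,\fK)$ this amounts to a rank computation in $\Lg$ that one can check case by case in the three families $\Lo(n+1)$, $\Lu(n+1)$, $\symp(n+1)$; carrying out this commutant analysis, and ensuring invariance of the deformed metric is maintained (which, for a left $G$-equivariant Cheeger process, reduces to taking $\gK$ normalized appropriately and performing the deformation simultaneously along every $G$-translate of $\gK$), is the technical heart of the argument.
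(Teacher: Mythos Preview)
The survey does not supply a proof of this theorem; it refers to [Wilking, 2002]. Nonetheless, your proposal has a genuine gap.

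The contradiction sits in your third step. You want the Cheeger-deformed metric $g_t$ to remain invariant under a group acting transitively on $\G/\gH$ --- you write ``namely $\G$ itself'' --- so that the zero-curvature locus is the $\G$-saturation of its fibre over the identity coset. But a Cheeger deformation of a left-invariant metric along the left action of a proper subgroup $\gK\subsetneq \G$ destroys left $\G$-invariance: the deformed metric is only invariant under $\gK\cdot Z_{\G}(\gK)$ from the left, and for your choice $\gK=G(n,\fK)$ that group does not act transitively on the flag. Your parenthetical fix, performing the deformation ``equivariantly with respect to the left $\G$-action'', either has no content or forces $\gK=\G$, which merely rescales the metric and changes nothing.

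Worse, if the metric \emph{were} $\G$-homogeneous the desired conclusion would fail outright. For $n\ge 3$ these flag manifolds do not appear in the classification of positively curved homogeneous spaces, so every $\G$-invariant metric carries a zero-curvature plane at $e\gH$ and hence, by homogeneity, at \emph{every} point; the set of points where all planes have positive curvature would then be empty rather than open and dense. The survey records immediately after the theorem that the isometry groups of the metrics actually constructed act with cohomogeneity~$2$; the metrics are genuinely inhomogeneous, and the zero-curvature analysis in [Wilking, 2002] must accordingly be carried out over a two-parameter family of points rather than at a single base point. Your framework, which computes brackets only in $\Lm$ at $e\gH$ and then appeals to transitivity, does not provide for this.
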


It is easy to see that the projective tangent bundle 
of $P_{\R}T\RP^{2n+1}$ is odd dimensional and not orientable. 
By a theorem of Synge it can not admit a metric with positive 
sectional curvature. 
In dimensions $4n-1$, $(n \ge 3)$ there are infinitely many 
homotopy types of simply connected compact manifolds
with positive sectional curvature on open dense sets. 
In fact one 'half' of the circle bundles  over $P_{\C}T\CP^n$
give rise to such examples.

It is also interesting to note that the natural inclusions 
among these examples remain totally geodesic embeddings and that the isometry 
groups of the manifolds act with cohomogeneity $2$.
By the results on positively manifolds with symmetry,
these properties could not persist for positively curved metrics. 
Another consequence is 
that $\Sph^2\times \Sph^3$, the universal cover of 
$P_{\R}T\RP^3$,
admits a metric with positive sectional curvature on an open dense 
set.

Finally we should mention that prior to [Wilking, 2002],
 Petersen and Wilhelm [1999] constructed a slightly different metric 
on the unit tangent bundle of $\Sph^4$, 
the universal cover of $ P_{\R}T\RP^{4}$, with positive curvature on an open dense set.

\section{ Open Problems.} 
In this final section we mention some of the major 
open problems in the subject. The authors 
favorite conjecture in this context is 
the so called Bott-conjecture which was 
promoted by Grove and Halperin

\begin{conj} Any nonnegatively curved manifold is rationally 
elliptic.
\end{conj}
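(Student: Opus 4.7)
The plan is to attack this through a combination of reductions, rational-homotopy reformulations, and a Gromov-style critical-point argument lifted to the loop space; I expect the last step to be the central obstacle, and I do not expect to settle the conjecture in full.

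First I would carry out the standard reductions. Using Theorem~\ref{thm: deforming}, the closed case reduces to a finitely covered product $T^d\times M'$ with $M'$ simply connected; since rational ellipticity is inherited by finite covers and products with tori, one is left with the simply connected compact case. The open case reduces by the soul theorem to the compact case, as $M$ deformation retracts onto its soul $\Sigma$, and ellipticity depends only on the rational homotopy type. So the conjecture becomes: every simply connected compact manifold of nonnegative sectional curvature is rationally elliptic.

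Next I would translate the conjecture into a loop-space statement. By Friedlander--Halperin, a simply connected finite CW complex $X$ is rationally elliptic if and only if the Betti numbers $b_i(\Omega X,\Q)$ grow at most polynomially in $i$; the alternative (rational hyperbolicity) forces exponential growth. Combined with Gromov's principle relating $\dim H_*(\Omega M,\F)$ on $[0,L]$ to counts of geodesic loops of length $\le L$, this reformulation means one must show that on a nonnegatively curved $M$, the homology of the based loop space, or equivalently the count of geometrically distinct geodesic loops of bounded length, grows only polynomially. This is in the spirit of Gromov's Betti number theorem (Theorem~\ref{thm: gromov}) but one order more refined.

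The heart of the attempt would then be to adapt the corank/content machinery from the proof of Theorem~\ref{thm: gromov} to the path space $\Omega_{p,q}M$ rather than to $M$ itself. Concretely, one would try to assign to each ball in $\Omega_{p,q}M$ (with respect to the energy functional) a ``corank'' in terms of critical points of a distance-like function whose gradient lines are controlled by Toponogov comparison applied fiberwise, and then run the same nested covering induction to estimate the content. The combinatorial input from Gromov's argument (the Euclidean sphere-packing bound coming from angles $\ge \pi/2 - 2^{-n}$) suggests that the analogous estimate on $\Omega M$, if it exists, will produce polynomial rather than exponential bounds, which is exactly the dichotomy one needs.

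The hard part will be precisely this transfer. The loop space is infinite-dimensional and carries no sectional curvature bound in any direct sense; Toponogov comparison controls finite-dimensional geodesic triangles but not families of triangles parameterized by loops. A serious approach therefore probably requires an intermediate finite-dimensional model, perhaps via piecewise-geodesic approximations of bounded mesh together with uniform estimates on the number of broken geodesics of bounded energy joining $p$ and $q$. Even granting all this, separating polynomial from exponential growth is a more delicate quantitative statement than finiteness, and all current techniques (including collapsing theory and the Cheeger--Fukaya--Gromov structure results alluded to in the introduction) give only qualitative information. I would expect that any genuine progress would have to be tested first on the smallest unresolved examples, such as ruling out a nonnegatively curved metric on $\CP^2\#\CP^2\#\CP^2$, which is rationally hyperbolic and whose exclusion is already beyond what is known.
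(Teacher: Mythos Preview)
The statement you are addressing is the Bott conjecture, and the paper does not prove it: it appears in the section on open problems precisely as an unresolved conjecture. There is therefore no proof in the paper against which to compare your attempt. What the paper does offer is an explanation of why all \emph{known} nonnegatively curved manifolds are rationally elliptic --- they are built from Lie groups via products, free quotients, and the Cheeger/Grove--Ziller disc-bundle gluings, and each of these operations preserves rational ellipticity (the last by Grove--Halperin) --- together with Grove's suggestion that one might attack the conjecture by induction on dimension via a nontrivial collapse.

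Your write-up is honest about this: you say at the outset that you do not expect to settle the conjecture, and your final paragraph correctly identifies the central obstruction. The reductions you carry out (to the simply connected compact case via the soul theorem and Theorem~\ref{thm: deforming}) and the Friedlander--Halperin reformulation in terms of polynomial growth of $b_i(\Omega M,\Q)$ are correct and standard. The idea of transporting Gromov's corank/content argument to the loop space is a natural thought, but as you yourself note, Toponogov comparison is a finite-dimensional tool and there is no known mechanism for turning it into the required polynomial-versus-exponential growth estimate on $\Omega M$; this is exactly why the conjecture remains open. Your closing test case, $\CP^2\#\CP^2\#\CP^2$, is well chosen: ruling out nonnegative curvature on it is already unknown. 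In short, what you have written is a reasonable research outline rather than a proof, and that is the most anyone currently has.
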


We recall that a manifold is called rationally elliptic 
if $\pi_*(M)\otimes \Q$ is finite dimensional. 
The conjecture would for example show that
the total rational Betti number of a nonnegatively curved 
manifold $M$ is bounded above by $2^n$
 with equality if and only if $M$ is a flat torus.

There is a conceptual reason why the Bott-conjecture holds 
for all known nonnegatively curved manifolds. Up to deformation of metrics
all known nonnegatively  manifolds are constructed 
from 
Lie groups endowed with biinvariant metrics 
using the following three techniques

\begin{enumerate}
\item[$\bullet$] One can take products of nonnegatively curved manifolds.
\item[$\bullet$] One can pass from a nonnegatively curved 
manifold endowed with a free isometric group action 
to the orbit space endowed with its submersion metric.
\item[$\bullet$] Due to work of Cheeger[1973]
and Grove and Ziller [2000] one can sometimes glue disc bundles, 
i.e., if $M$ is a nonnegatively curved manifold 
which is in two ways the total space of a  
sphere bundle (with the structure group being a Lie group),
 then sometimes the manifold obtained by glueing the two 
corresponding disc bundles has nonnegative curvature as well. 
\end{enumerate}

It is well known that Lie groups are rationally elliptic.
Furthermore, by the exact homotopy sequence the class of rationally 
elliptic manifolds is invariant under taking quotients 
of free actions. 
By the work of Grove and Halperin [1987], 
a manifold obtained by gluing two disc bundles along their 
common boundary is rationally elliptic if and only if
the boundary is.

Grove suggested that the conjecture should hold more 
generally for the class of simply connected almost nonnegatively curved 
manifolds. Here we call a manifold almost nonnegatively curved if it admits 
a sequence $g_k$ of metrics with diameter $1$ and sectional curvature $\ge -\eps(k)\to 0$.
The latter class contains more known examples. 
On the other hand the only additional technique needed 
to construct all of the known simply connected 
almost nonnegatively curved manifolds is: 

\begin{enumerate}
\item[$\bullet$] If $M$ is an almost nonnegatively  curved manifold 
and $P$ is a principal $\G$-bundle over $M$ with $\G$ being a 
compact Lie group, then $P$ has almost nonnegative sectional curvature
as well.
\end{enumerate}

Clearly with this method one can not leave the class of rationally elliptic 
manifolds either. Grove suggested that it might be possible 
to prove the Bott conjecture by induction on the dimension. In this context 
he posed the problem whether any compact nonnegatively curved manifold 
has a nontrivial collapse: Is there a sequence of metrics $g_n$ on 
$M$ with diameter $\le D$ and curvature $\ge -1$ such that $(M,g_n)$ 
converges to a $k$-dimensional Alexandrov space with $0<k<n$. 
Of course it would be also interesting if there is a property
that is shared by all nonnegatively curved Alexandrov spaces, 
and which in the case of manifolds is equivalent to saying that the 
space is rationally elliptic. Alexandrov spaces are more flexible 
since one can take quotients of non free group actions and in the case 
positive curvature joins of spaces. 

Totaro [2003] posed the problem whether any nonnegatively curved manifold 
has a good complexification, i.e., 
is $M$ diffeomorphic to the real points of complex smooth affine variety defined 
over $\R$
such that the inclusion into the complex variety is a homotopy equivalence. 
Totoro's work shows that these manifolds share many properties  
of rationally elliptic manifolds.

The Bott conjecture would also 
imply that the Eulercharacteristic of a nonnegatively curved manifold 
is nonnegative and positive only if the odd rational Betti numbers
vanish. The former statement is part of the Hopf conjecture.

\begin{conj}[Hopf] A compact nonnegatively (positively) 
curved manifold has nonnegative respectively positive 
Eulercharacteristic.
\end{conj}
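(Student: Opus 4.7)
The plan is to split the conjecture by dimension and by presence of symmetry, while keeping in mind that any purely pointwise Chern-Gauss-Bonnet attack is blocked by Geroch's counterexample, which shows the Pfaffian integrand can be negative at points of a positively curved metric once $\dim M \ge 6$. In odd dimensions the statement is trivial since $\chi(M)=0$ by Poincar\'e duality, so from now on I focus on $M^{2n}$. In dimension two Gauss-Bonnet does the job; in dimension four the Kleiner-type classification (Theorem~\ref{thm: kleiner}) settles every case with a positive-dimensional isometry group, and I would attack the remaining symmetry-free four-dimensional case through the Hirzebruch-Chern-Gauss-Bonnet identity combined with estimates on the Weyl tensor of a nonnegatively curved four-manifold.

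For $\dim M = 2n \ge 6$ the main strategy is to reduce the Hopf inequality to the Bott conjecture. If one can show that every simply connected compact nonnegatively curved manifold is rationally elliptic, then a standard fact about simply connected rationally elliptic spaces gives $\chi(M) \ge 0$ automatically, with $\chi(M) > 0$ precisely when all odd rational Betti numbers vanish. So the first step is to produce rational ellipticity. Following Grove's program, I would set up an induction on dimension: show that every compact nonnegatively curved manifold admits a nontrivial collapse to a lower dimensional Alexandrov space $A$, invoke the inductive hypothesis for the regular strata of $A$, and then lift rational ellipticity back to $M$ through the approximate fibration structure supplied by Perelman's stability theorem together with Kapovitch-Petrunin-Tuschmann style control of the collapsing fibers (Theorem~\ref{thm: kpt}).

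Parallel to this, I would exploit isometric symmetry to close the conjecture whenever $\Iso(M,g)$ has positive dimension. If a Killing field $X$ has no zero on a nonnegatively curved $M$, then averaging produces a free isometric $\Sph^1$-action and $\chi(M)=0$, which already suffices in the nonnegative case. In the positively curved, even dimensional case, Berger's theorem forces $\Fix(X) \ne \emptyset$, the fixed point set is a disjoint union of totally geodesic submanifolds of even codimension, and the Lefschetz-type identity $\chi(M) = \chi(\Fix(X))$ reduces the problem to a lower dimensional, still positively curved stratum. Theorems~\ref{thm:connected} and~\ref{thm: overfour} then control the topology of the strata and, in many cases, force their Euler characteristics to agree with those of rank one symmetric spaces, closing that induction rung.

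The principal obstacle, and in my opinion the essential content of the conjecture, is the case of a manifold with finite isometry group and no available collapse. Every classical tool breaks here: there is no fixed point set to play $\chi(M)=\chi(\Fix(X))$ against, Geroch blocks any pointwise integrand estimate, and the existing Ricci flow rigidity statements (Theorem~\ref{thm: 2 nonnon}) demand curvature conditions strictly stronger than $K \ge 0$. My best hope for a way forward is to first settle Grove's unconditional nontrivial collapse question, perhaps by extracting from the local normal holonomy of a flat strip an almost Killing field of small norm, in the spirit of the open manifold dual foliation result (Theorem~\ref{thm: dual fol}) transported to the compact setting. If that step can be made unconditional, the induction described above closes and the Hopf inequality follows in every dimension.
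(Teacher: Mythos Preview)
The statement is the Hopf conjecture, which the paper places in its section of \emph{open problems}; there is no proof in the paper because the conjecture is unresolved. What you have written is not a proof but a research programme, and to your credit you say as much in the final paragraph. Still, the gaps deserve to be named precisely. Your main line reduces Hopf to the Bott conjecture, which is exactly the implication the paper records immediately before stating Hopf; but Bott is itself open, and your proposed inductive attack on Bott via collapse is Grove's programme, also described in the paper and also open---in particular it is not known that every compact nonnegatively curved manifold admits a nontrivial collapse. So this branch replaces one open conjecture by a harder one. Moreover, even granting Bott, the \emph{positive} half of Hopf does not follow: rational ellipticity gives $\chi(M)>0$ only when the odd rational Betti numbers vanish, and you offer no argument that positive sectional curvature forces this.

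Your symmetry branch recovers the standard partial result: on an even-dimensional positively curved manifold with a nontrivial Killing field, Berger's fixed-point theorem together with $\chi(M)=\chi(\Fix(\gS^1))$ yields $\chi(M)>0$ by induction on dimension. This is correct and well known, but as you acknowledge, the generic metric has finite isometry group and then nothing in your outline applies. A smaller correction: in dimension four the nonnegative case is settled not via Theorem~\ref{thm: kleiner} (which assumes symmetry) but by the algebraic fact, observed by Milnor, that $K\ge 0$ makes the Chern--Gauss--Bonnet integrand pointwise nonnegative; Geroch's counterexample, which you correctly invoke, shows this pointwise mechanism fails from dimension six onward.
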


Slightly more modest (and vague) one might ask 

\begin{question} Is there any obstruction that distinguishes 
the class of simply connected compact manifolds 
admitting nonnegatively curved metrics from the corresponding 
class admitting positively curved metrics?
\end{question}

Of course the huge difference in the number of known examples suggests 
that plenty of such obstructions should exist,
 but to this day there is not a single 
dimension where such an obstruction has been found.
Closely related is 
another Hopf conjecture.

\begin{conj}[Hopf] $\Sph^2\times \Sph^2$ does not admit a metric 
of positive sectional curvature.
\end{conj}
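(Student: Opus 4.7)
The plan is to argue by contradiction: assume $g$ is a metric of positive sectional curvature on $M = \Sph^2\times\Sph^2$ and attempt to derive a topological or geometric contradiction. First I would rule out symmetry. By the Hsiang--Kleiner Theorem~\ref{thm: HK}, an orientable four-manifold of positive sectional curvature admitting a nontrivial isometric circle action must be homeomorphic to $\Sph^4$ or $\CP^2$; since $M$ is neither, every compact subgroup of $\Iso(M,g)$ must be finite. This immediately strips away the orbit-space, fixed-point, and symmetry-rank tools from Section~\ref{sec: pos}.

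Deprived of symmetry, I would turn to minimal surface methods, exploiting the splitting $H_2(M,\Z)\cong\Z\oplus\Z$ with hyperbolic intersection form. In each primitive class I would minimize area; regularity in dimension four produces smooth embedded minimizers $\Sigma_1,\Sigma_2$. A stability computation in the spirit of Synge forces each $\Sigma_i$ to be a two-sphere of controlled total Gauss curvature, and a Frankel-type theorem for minimal submanifolds in positive curvature forces $\Sigma_1\cap\Sigma_2\neq\emptyset$, consistent with intersection number $\pm 1$. I would then test the stability inequality on each $\Sigma_i$ against sections of the normal bundle aligned with the other factor, and try to combine the resulting pointwise curvature inequalities with Gauss--Bonnet on $\Sigma_i$, hoping to pin down the full sectional curvature operator at a point of intersection tightly enough to force a local de Rham splitting, in contradiction to strict positivity.

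The main obstacle, and the reason this conjecture has resisted proof for over seventy years, is precisely this last step. The stability of a minimal two-sphere controls only a two-plane's worth of ambient sectional curvature at each point, whereas the $\Z\oplus\Z$ splitting of $H_2$ is a global cohomological datum that stubbornly refuses to localise into a pointwise curvature identity. Without isometric symmetry there is no Alexandrov orbit space to exploit, and without a pinching hypothesis neither Micallef--Moore nor the Ricci-flow results of Section~\ref{sec: sphere} apply. A serious attack would either have to upgrade $\Sigma_1,\Sigma_2$ to totally geodesic surfaces so that the Connectedness Lemma~\ref{thm:connected} becomes available --- but here $n-2k+1=1$, the borderline case in which that lemma gives essentially no cohomological information --- or discover a curvature identity that distinguishes the hyperbolic intersection form of $\Sph^2\times\Sph^2$ from that of $\CP^2\#\overline{\CP^2}$, which itself carries nonnegatively curved metrics by Theorem~\ref{thm: kleiner}. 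I do not see how to close either gap, and any honest plan has to concede that, as this survey emphasises, no single obstruction is presently known that separates simply connected compact manifolds of positive sectional curvature from those of merely nonnegative sectional curvature.
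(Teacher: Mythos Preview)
The statement you were asked to prove is the Hopf Conjecture, listed in the paper's section on \emph{Open Problems}. The paper offers no proof; on the contrary, immediately after stating the conjecture it remarks that it is not even known whether $\Sph^2\times\Sph^2$ admits a metric with positive curvature almost everywhere, and then discusses the Bourguignon--Deschamps--Sentenac rigidity result and Kleiner's observation about the M\"uter metric as evidence that the situation is delicate. There is therefore no ``paper's own proof'' against which to compare your attempt.

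Your proposal is not a proof either, and to your credit you say so explicitly in the final paragraph. The strategy you sketch --- eliminate continuous symmetry via Hsiang--Kleiner, then try to extract a contradiction from stable minimal $2$-spheres representing the generators of $H_2$ --- is a reasonable line of thought, and your diagnosis of where it breaks down is accurate: stability of a minimal surface controls too little of the ambient curvature tensor, the Connectedness Lemma is vacuous at codimension $n/2$, and nothing known distinguishes the intersection form of $\Sph^2\times\Sph^2$ from that of $\CP^2\#\overline{\CP^2}$ at the level of positive-curvature obstructions. This is consistent with the survey's own emphasis that no obstruction is currently known separating positive from nonnegative sectional curvature on simply connected compact manifolds. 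In short, there is no gap to name beyond the one you already name yourself: the conjecture is open.
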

Unlike on $\Sph^2\times \Sph^3$ it is not known 
whether there is metric on $\Sph^2\times \Sph^2$ 
with positive curvature almost everywhere. 
For that reason one could hope that the nonnegatively curved 
metrics on $\Sph^2\times \Sph^2$ are rather rigid. 
In fact a partial confirmation of this view was given by 
 Bourguignon, Deschamps, and Sentenac [1972]. They showed that for a 
product metric on $\Sph^2\times \Sph^2$ without Killing fields
any  analytical deformation which preserves 
nonnegative curvature is up to diffeomorphisms 
 given by a deformation through product metrics.

However, one should be careful to expect too much rigidity in this 
context. The author learned the following observation from 
Bruce Kleiner. We consider $\Sph^2\times \Sph^2$ 
endowed with the M\"uter metric 
\[
(\Sph^2\times \Sph^2,g)= \gS^1\times \gS^1\times\{1\}
\bigl\backslash \SO(3)\times \SO(3)\times \SO(3)\bigr/\Delta \SO(3)
\]
where $\SO(3)^3$ is endowed with a biinvariant metric. 
Clearly the metric is of cohomogeneity one, since there 
is an  $\SO(3)$-action  from the left on $\SO(3)^3$ commuting 
with the left action of $\gS^1\times \gS^1$.
The two singular orbits are given by two $2$-dimensional spheres
 and we let 
$M_{reg}\subset 
\Sph^2\times \Sph^2$ denote the union of all principal orbits.
M\"uter [1987] showed for each point $p\in M_{reg}$
that there is precisely one 
zero curvature plane based at $p$. 
Moreover the plane is tangent to a totally geodesic torus in $M$.

In particular the generic part of the manifold 
$M_{reg}\subset \Sph^2\times \Sph^2$ 
is foliated by totally geodesic flat submanifolds. 
We now consider a symmetric $(2,0)$ tensor $b$,
 whose compact support is contained in $M_{reg}$ and for which 
$b(v,\cdot)=0$ for all $v$ contained 
in a zero curvature plane. It is then straightforward to 
check that the foliation of $M_{reg}$ 
by totally geodesic flats remains a totally geodesic foliation 
by flats for all metrics in the family 
$g(t)=g+tb$. Therefore, the set of zero curvature 
planes of $(M,g(t))$ contains the set of zero curvature planes 
in $(M,g(0))$. What is more: the 
zero curvature planes remain critical points of the 
sectional curvature. Since the zero curvature 
planes in $(M_{reg},g(0))$ form a submanifold of the Grassmannian $\Gr_2(M_{reg})$
and the Hessian of the sectional curvature function 
is nondegenerate transversal to this submanifold, 
it is clear that $(M,g(t))$ has nonnegative 
sectional curvature for all small $t$. 

This shows that the space of nonnegatively curved metrics of
$\Sph^2\times \Sph^2$ is 
somewhat larger than one would expect at first glance.\\[1ex]

One way to give new impulses 
to the subject is to construct new examples. 
In this context we pose the following question. 

\begin{question} Are there any positively curved 
compact Alexandrov spaces satisfying Poincare duality which 
are not homeomorphic to one of the known positively curved manifolds?
\end{question}

Of course an easy way to check that an Alexandrov space satisfies Poincare 
duality  
 is to show that the space of directions at each point is homeomorphic to 
a sphere. 
One could try to look at non free isometric group actions 
on nonnegatively curved manifolds and ask whether the orbit space 
is homeomorphic to a manifold without boundary. 
It would be also  interesting to know, whether in special circumstances 
one can resolve the metric singularities of a positively curved Alexandrov space 
while keeping positive curvature.

\section{Added in Proof.}\label{sec: added} 

One of the most significant developments in the subject
 took place after this survey was completed. 
We will briefly explain it here.
We recall that a manifold is strictly pointwise quarter pinched 
if at each point $p\in M$  there is a constant $\kappa(p)\ge 0$ such that 
for all planes based at $p$ have curvature strictly between $\kappa(p)$ 
and $4\kappa(p)$. 

\begin{thm}[Brendle and Schoen, 2007]\label{thm: quarter ricci} For any strictly pointwise 
quarter pinched manifold $(M,g)$, the normalized  Ricci flow evolves 
$g$ to a limit metric of constant sectional curvature. 
\end{thm}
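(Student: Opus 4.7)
The plan is to follow the Böhm--Wilking strategy outlined earlier in the survey, applied to a cone tailored to pointwise quarter pinching. The first step is to identify the correct invariant cone in $S_B^2(\so(n))$. Following Brendle--Schoen's key algebraic observation, strict pointwise $1/4$-pinching of $(M,g)$ is closely related to the curvature operator of the product $(M,g)\times\R^2$ having strictly positive isotropic curvature (PIC2). So I would work with the closed convex $\Or(n)$-invariant cone
\[
C_{\mathrm{PIC2}}:=\{R\in S_B^2(\so(n))\mid R\oplus 0\text{ on }\so(n+2)\text{ has nonnegative isotropic curvature}\}
\]
and verify the algebraic fact that a strictly pointwise quarter pinched curvature operator lies in the interior of $C_{\mathrm{PIC2}}$.

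The second step is to show that $C_{\mathrm{PIC2}}$ is invariant under the Hamilton ODE $R'=R^2+R^\#$ on curvature operators. This is an algebraic computation: given $R\in\partial C_{\mathrm{PIC2}}$ realizing a null isotropic $4$-frame in $\so(n+2)$, one must check that $R^2+R^\#$ evaluated on that frame is nonnegative. Hamilton's maximum principle then promotes ODE-invariance to PDE-invariance, so the condition $M\times\R^2\in\mathrm{PIC}$ is preserved under the Ricci flow. Consequently, strict pointwise $1/4$-pinching at time $0$ survives (in the PIC2 sense) for all positive times of the flow on a compact manifold.

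The third and main step is the construction of a pinching family $\{C(s)\}_{s\in[0,1)}$ of closed convex $\Or(n)$-invariant cones, in the sense recalled in the sketch of Theorem~\ref{thm: boehm wilking}, with $C(0)=C_{\mathrm{PIC2}}$ and $C(s)\to\R^+ I$ as $s\to 1$. Here I would imitate the Böhm--Wilking construction: define a one-parameter family of $\Or(n)$-equivariant linear transformations
\[
\ell_{a,b}\colon S_B^2(\so(n))\to S_B^2(\so(n)),\qquad R\mapsto R+2a\,\Ric_0\wedge\mathrm{id}+\tfrac{b\,\scal}{n(n-1)}\,\mathrm{id}\wedge\mathrm{id},
\]
and take $C(s)=\ell_{a(s),b(s)}\bigl(C_{\mathrm{PIC2}}\bigr)$ for a well-chosen path $(a(s),b(s))$. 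The transformation law for $R^2+R^\#$ under such $\ell_{a,b}$ then allows one to verify, inside the interior of each $C(s)$, that $R^2+R^\#$ points strictly into the tangent cone; this is what yields the pinching property. Once such a family is in hand, the general machinery of Böhm--Wilking produces a pinching set, and one concludes as in Theorem~\ref{thm: boehm wilking} that the normalized Ricci flow converges to a metric of constant positive sectional curvature.

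The main obstacle will be the third step. The PIC2 cone is strictly larger than the $2$-nonnegative curvature cone used in Theorem~\ref{thm: boehm wilking}, so the explicit transformations of Böhm--Wilking do not apply verbatim; one must find a new path of linear transformations that simultaneously preserves $C_{\mathrm{PIC2}}$-type conditions and improves pinching. The delicate point is to keep the image cones ODE-invariant while pushing their extreme rays towards $\R^+ I$, which requires a careful analysis of how the Weyl-type part of the curvature operator is deformed by $\ell_{a,b}$ relative to the reaction term. The algebraic preservation of PIC2 in step two is also nontrivial but, once one writes out the components of $R^2+R^\#$ along an isotropic $4$-frame in $\so(n+2)$, reduces to a finite list of inequalities that can be handled directly.
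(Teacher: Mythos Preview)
Your overall strategy matches the paper's: reduce to the PIC2 cone, show it is ODE-invariant, and then feed it into the B\"ohm--Wilking pinching machinery. However, you have inverted the difficulty, and this matters.

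You flag step three as ``the main obstacle'' and claim that ``the explicit transformations of B\"ohm--Wilking do not apply verbatim; one must find a new path of linear transformations.'' This is precisely what the paper says is \emph{not} the case. The paper isolates a general result (Theorem~\ref{cones}): for \emph{any} $\Or(n)$-invariant cone $C$ which is ODE-invariant, contains the nonnegative curvature operators, and is contained in the cone of nonnegative sectional curvature, the normalized Ricci flow converges. The proof is that the pinching family
\[
C(s)=l_{a(s),b(s)}\bigl(\{R\in C\mid \Ric\ge \tfrac{\scal}{n}p(s)\}\bigr)
\]
works with the \emph{same} parameters $a(s),b(s),p(s)$ as in [B\"ohm and Wilking, 2006], because that construction only used the three abstract properties listed. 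So no new path is needed; one just has to verify that $C_{\mathrm{PIC2}}$ is sandwiched between nonnegative curvature operators and nonnegative sectional curvature (a step you omit). Note also that your proposed $C(s)=\ell_{a(s),b(s)}(C_{\mathrm{PIC2}})$ is missing the intersection with the Ricci pinching set; without it the family does not collapse to $\R^+I$.

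The genuine hard step is your step two, which you pass over as ``a finite list of inequalities that can be handled directly.'' The paper is emphatic that the invariance of nonnegative isotropic curvature under the ODE---proved independently by Nguyen and by Brendle--Schoen via second variation at a null isotropic four-frame---is a long computation whose success ``comes close to being a miracle.'' Once PIC is invariant in all dimensions, PIC2 invariance follows (the ODE on $S_B^2(\so(n))$ is compatible with trivial extension to $\so(n+2)$), and the rest is formal.
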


 We use the notation that 
we introduced in section 1 in connection with  Theorem~1.10.
The theorem relies on the following 
result.

\begin{thm}[B\"ohm and Wilking, 2006]\label{cones} Let $C$ be an $\Or(n)$-invariant cone $C$ 
in the vector space of curvature operators $S^2_B(\so(n))$
 with the following properties 
\begin{enumerate}
\item[$\bullet$] $C$ is invariant under the ODE $\tfrac{d}{dt}R=R^2+R^\#$. 
\item[$\bullet$] $C$ contains the cone of nonnegative curvature operators 
or slightly weaker all nonnegative curvature operators of rank 1.
\item[$\bullet$] $C$ is contained in the cone of curvature operators 
with nonnegative sectional curvature. 
\end{enumerate}
Then for any compact manifold $(M,g)$ whose curvature operator 
is contained in the interior of $C$ at every point $p\in M$,
 the normalized Ricci flow 
evolves $g$ to a limit metric of constant sectional curvature. 
\end{thm}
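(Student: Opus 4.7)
My plan is to reduce Theorem~\ref{cones} to the general pinching family argument invoked in the proof sketch of Theorem~1.10. Recall that once one has a continuous family of closed $\Or(n)$-invariant convex cones $C(s)\subset S^2_B(\so(n))$, $s\in[0,1)$, satisfying the three pinching properties (positive scalar curvature, transversality of $R^2+R^\#$ to $\partial C(s)$, and Hausdorff convergence to $\R^+ I$), any compact manifold whose curvature operator lies in the interior of $C(0)$ at every point evolves under normalized Ricci flow to a constant curvature metric. So the whole task is: \emph{given} the cone $C$ in the hypothesis, \emph{construct} a pinching family $C(s)$ with $C(0)\subset C$ (or $C\subset C(0)$, so that the hypothesis of Theorem~\ref{cones} places $R_g$ in the interior of some $C(s_0)$, after which the family $C(s)$ for $s\ge s_0$ is used).

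The construction would proceed via the $\Or(n)$-equivariant linear transformation technique. Decompose every $R\in S^2_B(\so(n))$ as $R=R_I+R_0+R_W$, where $R_I$ is the scalar part, $R_0$ the traceless Ricci part, and $R_W$ the Weyl part, and consider the two-parameter family of equivariant linear maps
\[
\ell_{a,b}(R)\;=\;R_W+(1+b)R_0+\bigl(1+(n-2)b+2(n-1)a\bigr)R_I.
\]
The key technical computation is the transformation law: there is an explicit quadratic polynomial $D_{a,b}(R)$ in the components of $R$ such that
\[
\ell_{a,b}(R)^2+\ell_{a,b}(R)^\# \;=\; \ell_{a,b}\bigl(R^2+R^\#\bigr)+D_{a,b}(R),
\]
and $D_{a,b}(R)$ is substantially more tractable than a generic $\Or(n)$-invariant quadratic expression because large blocks of cross terms vanish by representation-theoretic accident. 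In particular, for many curvature operators $R$ the correction $D_{a,b}(R)$ points into the tangent cone of $C$ at $R$, so if the original cone $C$ is ODE-invariant, then $\ell_{a,b}(C)$ is ODE-invariant as well, provided $(a,b)$ lies in a suitable region that one can compute explicitly from the hypotheses on $C$.

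Using this, I would construct the pinching family as follows. First, pick a path $s\mapsto(a(s),b(s))$ in parameter space starting at $(0,0)$ (so $\ell_{0,0}=\id$) and ending at parameters that collapse everything to the scalar part (so $\ell_{a(s),b(s)}(S^2_B(\so(n)))\to \R^+ I$ in Hausdorff topology). Define $C(s):=\ell_{a(s),b(s)}(C)$. The hypothesis that $C$ contains nonnegative rank one operators (equivalently the cone generated by operators of the form $v\wedge w\otimes v\wedge w$) guarantees that after applying $\ell_{a,b}$ with $a,b>0$ one still has positive scalar curvature on $C(s)\setminus\{0\}$, and the hypothesis that $C$ sits inside the nonnegative sectional curvature cone gives the estimates needed to show that the $D_{a,b}$ correction is strictly interior-pointing (not merely non-outward-pointing) for $s>0$. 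Finally I would verify Hausdorff convergence $C(s)\to\R^+ I$ by direct inspection of $\ell_{a(s),b(s)}$ along the chosen path.

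The main obstacle is the second pinching condition: showing that $R^2+R^\#$ lies in the \emph{interior} of the tangent cone of $C(s)$ at every nonzero boundary point $R\in C(s)$, uniformly in $s$. This is the step where one genuinely needs the quantitative structure of $D_{a,b}$, because generic invariance only says $R^2+R^\#$ lies in the closed tangent cone; the strict transversality comes from a lower bound on $D_{a,b}(R)$ in a direction transverse to $\partial C(s)$, and it is this lower bound which forces the very specific choice of path $(a(s),b(s))$ (in the B\"ohm--Wilking argument, a carefully tuned trajectory rather than the naive linear one). Once transversality is established, the general pinching family result and Hamilton's maximum principle close the argument.
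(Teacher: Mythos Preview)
Your overall strategy is exactly the one the paper uses: reduce to the existence of a pinching family with $C(0)=C$, and build that family using the B\"ohm--Wilking linear maps $\ell_{a,b}$. The transformation law you quote and the role of the three hypotheses on $C$ are also correctly identified.

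There is, however, a concrete gap in your construction. You set $C(s):=\ell_{a(s),b(s)}(C)$, whereas the paper (following the original B\"ohm--Wilking argument) defines
\[
C(s)\;:=\;\ell_{a(s),b(s)}\Bigl(\bigl\{\,R\in C\;\bigm|\;\Ric(R)\ge \tfrac{\scal(R)}{n}\,p(s)\,\bigr\}\Bigr),
\]
with an additional Ricci-pinching parameter $p(s)$ chosen in tandem with $(a(s),b(s))$. This extra intersection is not cosmetic. At a boundary point $\ell_{a,b}(R)$ of $\ell_{a,b}(C)$ coming from $R\in\partial C$, the transformation law gives $\ell_{a,b}(R)^2+\ell_{a,b}(R)^\#=\ell_{a,b}(R^2+R^\#)+D_{a,b}(R)$, and the first term is only in the \emph{closed} tangent cone. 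The correction $D_{a,b}(R)$ does not, in general, point strictly inward along the face inherited from $\partial C$; in particular it can vanish or be tangential there, so no choice of path $(a(s),b(s))$ alone will produce the strict transversality you need. The Ricci-pinching constraint introduces a \emph{new} boundary face, and it is on that face that one verifies $D_{a,b}$ is strictly inward-pointing (this is where the hypothesis that $C$ lies inside the nonnegative sectional curvature cone is actually used quantitatively). The coordinated choice of $a(s),b(s),p(s)$ then ensures the whole family pinches down to $\R^+ I$.

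So the fix is: replace $\ell_{a(s),b(s)}(C)$ by the image under $\ell_{a(s),b(s)}$ of the Ricci-pinched subcone, and take the parameters $a(s),b(s),p(s)$ exactly as in the B\"ohm--Wilking paper. With that correction your outline matches the paper's argument.
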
 

It actually suffices to assume that the curvature operator of $(M,g)$ 
is contained in $C$  at all points, and 
 in the interior of $C$ at some point, cf. [Ni and Wu, 2006].

We should remark that the theorem was not stated like 
this in [B\"ohm and Wilking 2006]. However 
by Theorem 5.1 in that paper it suffices to construct a pinching family with $C(0)=C$. Furthermore, the construction of 
a pinching family  for the cone of nonnegative 
curvature operators only relied on the above three properties. 
In other words, one can define a pinching family $C(s)$ with $C(0)=C$ 
by 
\[
C(s):= l_{a(s),b(s)}\bigl(\{ R\in C\mid \Ric\ge \tfrac{\scal}{n}p(s)\}\bigr) 
\] 
where the parameters $a(s),b(s)$ defining 
the linear map $l_{a(s),b(s)}\colon S^2_B(\so(n))\rightarrow S^2_B(\so(n))$ 
and  $p(s)$ are chosen exactly as in [B\"ohm and Wilking, 2006].

\begin{proof}[Sketch of the proof of Theorem~\ref{thm: quarter ricci}.]
The most important step was proved independently 
by Nguyen [2007] and Brendle and Schoen [2007]: Nonnegative isotropic 
curvature defines a Ricci flow invariant curvature condition. 
Both proofs are similar. By Hamilton's maximum principle 
it suffices to show that the cone $C$  of curvature operators 
with nonnegative isotropic curvature is invariant under the ODE
\[
\tfrac{d}{dt}R=R^2+R^\#. 
\]
The idea is to make use of the second variation formula at a
 four frame where the isotropic curvature attains a zero -- 
that is one uses the fact that the Hessian of the isotropic curvature 
function is positive semidefinite. 
Although the computation is elementary it is quite long and that it succeeds 
comes close to being a miracle.

 Brendle and Schoen then proceed as follows. 
They consider the condition that a Riemannian manifold crossed 
with $\R^2$ has nonnegative isotropic curvature. 
It is easy to see that the cone $C$ of 
 curvature operators corresponding to this 
curvature condition satisfies the hypothesis 
of Theorem~\ref{cones}. 

Finally Brendle and Schoen establish that any pointwise 
quarter pinched manifold $(M,g)$ has the property that 
$(M,g)\times \R^2$ has nonnegative isotropic curvature. 
This is again a lengthly computation.
\end{proof}

\begin{rem} Ni and Wolfson [2007] observed that the condition 
that the manifold crossed with $\R^2 $ has nonnegative isotropic curvature 
is equivalent to saying that $(M,g)$ has nonnegative complex sectional 
curvature. 
They also give an alternative shorter  argument 
that nonnegative complex curvature is preserved by the Ricci flow. 
Finally they remark that the statement that quarter pinched manifolds
 have nonnegative complex curvature was essentially known. 
In fact Yau and Zheng  showed
 in a different context that a curvature operator 
with sectional curvature between $-4$ and $-1$ has nonpositive 
complex sectional curvature. 
\end{rem}

\addtocontents{toc}{}

\hspace*{1em}\\
\begin{footnotesize}
\hspace*{0.3em}{\sc Universit\"at  M\"unster,
Einsteinstrasse 62, 48149 M\"unster, Germany }\\
\hspace*{0.3em}{\em E-mail address: }{\sf wilking@math.uni-muenster.de}\\
\end{footnotesize}
\end{document}